\definecolor{airforceblue}{rgb}{0.2, 0.2, 0.6}
\definecolor{bleudefrance}{rgb}{0.0, 0.0, 1.0}
\definecolor{darkorchid}{rgb}{0.6, 0.2, 0.8}
\definecolor{darkorange}{rgb}{1.0, 0.55, 0.0}
\definecolor{darkspringgreen}{rgb}{0.09, 0.45, 0.27}
\definecolor{commentoutput}{rgb}{0.33,0.38,0.44}
\definecolor{output}{rgb}{0.24, 0.08, 0.08}
\definecolor{circOut}{rgb}{0.0, 0.81, 0.82}
\definecolor{Gray}{gray}{0.9}  
\newtheorem{thm}{Theorem}[section]
\newtheorem{cor}[thm]{Corollary}
\newtheorem{Proposition}[thm]{Proposition}
\newtheorem{defin}[thm]{Definition}
\newtheorem{rmk}[thm]{Remark}  
\theoremstyle{remark}
\theoremstyle{definition}
\newtheorem{ex}[thm]{Example}
\newcommand{\map}{\dasharrow}
\newcommand{\SRat}{\operatorname{SRat}}
\def\p{\mathbb P}
\def\P{\mathbb{P}}
\def\I{\mathcal I}
\renewcommand{\O}{\mathcal O}
\def\rk{\operatorname{rk}}
\newcommand{\prim}{\operatorname{prim}}
\def\Sing{\operatorname{Sing}}
\newcommand{\Bl}{\operatorname{Bl}}
\newcommand{\disc}{\operatorname{disc}}
\newcommand{\alg}{\operatorname{alg}}
\newcommand{\Sec}{\operatorname{Sec}}
\newcommand{\BaseLocus}{\operatorname{BaseLocus}}
\def\Aut{\operatorname{Aut}} 
\def\Bl{\operatorname{Bl}}
\def\coker{\operatorname{coker}}
\def\Rat{\operatorname{Rat}}
\def\Pic{\operatorname{Pic}}
\def\Sing{\operatorname{Sing}}
\def\top{\operatorname{top}}
\def\NL{\operatorname{NL}}%
\def\eg{{\it e.g.}~}
\def\inv{^{-1}}
\let\phi=\varphi
\def\length{\operatorname{length}}
\numberwithin{equation}{section}
\newsavebox{\pmatrixbox}
\newenvironment{colorpmatrix}
  {\begin{lrbox}{\pmatrixbox}
   \mathsurround=0pt
   $\displaystyle
   \begin{pmatrix}}
  {\end{pmatrix}$%
   \end{lrbox}%
   \usebox{\pmatrixbox}%
   \kern-\wd\pmatrixbox
   \makebox[0pt][l]{$\left(\vphantom{\usebox{\pmatrixbox}}\right.$}%
   \kern\wd\pmatrixbox
}
\begin{document}

\title[On complete intersections of three quadrics in $\p^7$]{On complete intersections of three quadrics in $\p^7$}

\subjclass[2010]{Primary 14E08; Secondary 14M20, 14M07, 14N05, 14J28, 14J70}
\keywords{Rationality of fourfolds, Flops, Mori Theory}

\author[F. Russo]{Francesco Russo*}
\address{Dipartimento di Matematica e Informatica, Universit\` a degli Studi di Catania, 
Viale A. Doria 5, 95125 Catania, Italy
}
\email{francesco.russo@unict.it  giovanni.stagliano@unict.it}
\thanks{*Both authors were partially  supported  by the PRIN 2020 {\it Squarefree Gr\" obner degenerations, special varieties and related topics} and are members of the G.N.S.A.G.A. of INDAM}

\author[G. Staglian\` o]{Giovanni Staglian\` o*}

\begin{abstract}
We describe explicit birational maps from some rational complete intersections  of three quadrics in $\p^7$ to some prime Fano manifolds 
together with their Sarkisov decomposition via a single {\it Secant Flop}, allowing us to recover the {\it cohomologically} associated Castelnuovo
surface of general type with $K^2=2$ and $\chi=4$ (the double cover of $\mathbb P^2$ ramified along the discriminant curve of the net of quadrics defining the complete intersection) as the minimal model of the non ruled irreducible component of the base locus 
of the inverse maps. In passing we also revisit and reformulate the results in \cite{HPT2} about the existence
of infinitely many loci of  rational complete intersection of three quadrics in $\p^7$ to produce explicitly some of these
loci of  low codimension together with many other irreducible
unirational components of the Noether--Lefschetz locus. 
 \end{abstract}

\maketitle

\section*{Introduction}
Many deep contributions concerning the non stable rationality of very general Fano complete intersections of dimension at least four have been recently achieved 
(see for example \cite{Totaro2015, HPT2, Schreieder2019,NOtte}). On the other hand,  the locus $\Rat(\mathcal F)$ of geometrically rational varieties  (respectively, $\SRat(\mathcal F)$ of geometrically stably rational varieties) in an arbitrary family $\mathcal F$ of irreducible smooth projective  varieties is  the union of  countably many closed subsets by \cite{dFF} and \cite{KontsevichTschinkelInventiones} (respectively \cite{NS}). The remarkable results in \cite{HPT1, HPT2} show that there exist  (universal) families $\mathcal F$ of Fano fourfolds, whose very general element is not stably rational and such that 
$\Rat(\mathcal F)$ (and hence $\SRat(\mathcal F)$) contains infinitely many closed subsets whose union  is dense in the analytic topology. In the other cases where non stable rationality of the very general element of the universal family $\mathcal F$ has been proved, no example of a (stably) rational element in $\mathcal F$ is known so one may wonder if $\Rat(\mathcal F)$ (or  $\SRat(\mathcal F)$) is empty for these families. Smooth quartic hypersurfaces of dimension at least four are probably the most intriguing case. Even when the picture is as expected, the complete description of $\Rat(\mathcal F)$  remains a very challenging  problem, e.g. for complete intersection of three quadrics in $\p^7$. On the contrary, for the universal family $\mathcal C$ of cubic  fourfolds  there exists a precise conjecture, due to Hassett and Kuznetsov, according to which $\Rat(\mathcal C)$ is the union of countably many irreducible divisors, which is still open in its generality as well the weaker longstanding question if  $\Rat(\mathcal C)\subsetneq \mathcal C$.

The known constructions of  countably many irreducible components  of $\Rat(\mathcal F)$  for a  family $\mathcal F$ of Fano fourfolds are essentially of the same type. First one shows that a general member  $[X]\in\mathcal F$ has an elementary (Sarkisov) link to a Mori fiber space $\nu:W\to S$,
\begin{equation}\label{diagDiv}
\UseTips
 \newdir{ >}{!/-5pt/\dir{>}}
 \xymatrix{
 X'\ar[d]_\lambda\ar@{-->}[rr]^{\tilde\mu}&   &W\ar[d]^\nu                         \\
X& &S, }
\end{equation}
where: $\lambda$ is a divisorial extremal contraction; $\tilde\mu$ is a flop (maybe the identity map); $W$ is smooth and
the general fiber of $\nu$ is either a quadric surface or a del Pezzo surface of degree 6 (see \cite{HassettPlane, AHTVA, HPT2, HPT1});
$S$ is a smooth rational surface (in most cases $\p^2)$.
Then one imposes that the Mori fiber spaces $\nu:W\to S$ contains
an algebraic  2-cycle intersecting the general fiber of $\nu$ in a 0-cycle of odd degree, that is one imposes the existence of an odd degree (rational) multisection of $\nu$. An application of infinitesimal
Hodge theory as in \cite{CHM, Voisin2, HassettPlane, AHTVA, HPT2} (see also Corollary \ref{infNL} here) allows the construction of  infinitely many closed irreducible proper subsets of $\mathcal F$  of this kind, having  the right codimension $h^{3,1}(X)$. General results on quadric surfaces
or on sextic del Pezzo surfaces over arbitrary fields assure the existence of a section of $\nu$, proving the rationality of general elements in these loci and hence of every element  by \cite[Theorem 1]{KontsevichTschinkelInventiones}.

The method described above, which is very powerful but indirect by nature, does not allow the explicit determination of a (rational) section of $\nu:W\to S$ and hence of an explicit birational map from $W$ to $\p^4$ (or  to some rational fourfold), see \S \ref{birass}. Here we shall construct explicit birational maps from  general members $X\subset\p^7$ of some families in the Hilbert scheme $\mathcal H_{(2,2,2);7}$ of complete intersections of three quadrics in $\p^7$  to some rational prime Fano fourfolds $W$, whose birational representations on $\p^4$ are well known (see  Examples  \ref{esempioPiano},  \ref{esempio2}, \ref{esempio2cont}, Theorem \ref{esempioC14} and also Tables \ref{Table: rational complete intersections of three quadrics in P7} and \ref{Table: continuation of Table 1}). In this way we get a complete Sarkisov decomposition of the corresponding birational maps from $X$ to $\p^4$.  The method introduced here extends suitably the notion of  congruence of curves, introduced in \cite{RS1} and used also in \cite{Explicit, JEMS, HS} to prove  (explicit) rationality of some families of cubic or  Gushel--Mukai fourfolds. 

The birational maps $\mu:X\map W$ with $W$ a rational prime Fano manifold  have a precise description via linear systems (as in \cite{Explicit, JEMS}) and give raise to diagrams: 
\begin{equation}\label{diagDivInt}
\UseTips
 \newdir{ >}{!/-5pt/\dir{>}}
 \xymatrix{
 X'\ar[d]_\lambda\ar@{-->}[rr]^{\tilde\mu}&   &W'\ar[d]_\nu                         \\
X\ar@{-->}[rr]_{\mu}& &W }
\end{equation}
with $\lambda$ and $\nu$ extremal divisorial contractions and with $\tilde\mu$ a  flop (Sarkisov link of type $II$ according to \cite[p.~391]{HMK}). For cubic fourfolds $X\subset\p^5$ we constructed irreducible surfaces $S\subset X\subset\p^5$
such that on $X'=\Bl_S X$ there exists the flop  $\tilde\mu$ of the locus of trisecant lines to $S$ contained in $X$ (Trisecant Flop), which is expected to be an irreducible  ruled surface for  a general $S$
(recall that a general surface $S\subset\p^5$ has a two dimensional family of trisecant lines and  to be contained in a cubic through $S$ imposes a condition to a trisecant line to $S$).
Analogously, here we shall find explicit examples of maps $\mu:X\map W$  by constructing irreducible smooth surfaces $S\subset X\subset\p^7$ such that on $X'=\Bl_SX$ there exists the flop $\tilde\mu$ of the locus of secant lines
to $S$ contained in $X\subset\p^7$ (Secant Flop), which is expected to describe an irreducible ruled surface in $X$ (to be contained in $X$ imposes three conditions to the four dimensional family of secant lines to $S\subset X$: one for each quadric defining $X$), see \S \ref{s2} for definitions and  for the main results. This generalization is very natural from the point of view of the Minimal Model Program. Indeed, in the first case  $-K_{X'}=3H'-E$ so that the flop contraction is determined by the extremal ray generated by the strict transform of a (multiple of a) trisecant line to $S$ contained in $X\subset\p^5$ (such a curve $C'$ verifies  $K_{X'}\cdot C'=0$). In the second case $-K_{X'}=2H'-E$ so that the flop contraction is determined by the extremal ray generated by the strict transform of a (multiple of a) secant lines to $S$ contained in $X$. The last key step, exactly as in \cite{JEMS}, is to remark that the existence of a congruence of unisecant threefolds to $X\subset\p^7$ is equivalent to the existence of the extremal divisorial contraction $\nu:W'\to W$ with $W$ a prime Fano fourfold, see \S \ref{s2}.

 If the octic discriminant curve $C_X\subset\p^2$ of the net of quadrics defining $X\subset\p^7$  is smooth (the general case) and if  $S_X\to \p^2$ is the double cover of $\p^2$ ramified along $C_X$, then there exists an isomorphism of Hodge structures between $H^4(X,\mathbb C)$ and $H^2(S_X,\mathbb C)$
preserving the rank of algebraic cycles (see Theorem \ref{OG}). In particular,  since $\Pic(S_X)$ has rank one if $S_X$ is very general, algebraic 2-cycles on a very general  smooth $X\subset \p^7$ as above are multiples of a linear section. Hodge theory predicts that under certain hypothesis the inverse maps $\mu\inv$  of the $\mu$'s in  diagrams  \eqref{diagDivInt} contain in their base locus scheme a birational incarnation of  surfaces $U\subset W$ having the {\it same  cohomology} of $X$ (see   \S \ref{factSar} and \ref{birass} for a more detailed discussion of this principle). The same phenomenon  occurs for rational {\it special} cubic fourfolds and rational {\it special} Gushel--Mukai fourfolds with K3 surfaces as  the minimal model of the desingularizations of $U\subset W$ (see \cite{JEMS}). In the examples studied here we shall verify that  a birational incarnation of $S_X$ appears in $W$ as a component of the base locus of $\mu\inv$ (more precisely it supports the base locus of $\nu\inv$)  and that in these cases a geometric realization of  the abstract isomorphism between $H^4(X,\mathbb C)$ and $H^2(S_X,\mathbb C)$ constructed in Theorem \ref{OG} is realized via diagram \eqref{diagDivInt}. In particular, the minimal model of a desingularization of $U$ is a Castelnuovo surface of general type with $K^2=2$ and $p_g=3$ as predicted by the general discussion in \S \ref{factSar} and \ref{birass} (see Tables \ref{Table: rational complete intersections of three quadrics in P7} and \ref{Table: continuation of Table 1}). Our favorite example of {\it birational association} and of diagrams like \eqref{diagDivInt} appears in Theorem  \ref{esempioC14}, where $W$ is a (general) cubic fourfold in the Fano divisor $\mathcal C_{14}$,  $U\subset W\subset\p^5$ is a minimal Castelnuovo surface of degree 13 and sectional genus 12 while $S\subset X\subset\p^7$ is the internal projection of a general K3 surface of genus 8 and degree 14 in $\p^8$, which is explicitly {\it associated} to $W$.

In analogy with the theory developed by Hassett for {\it special} cubic fourfolds we introduce a notion of discriminant for irreducible components of the Noether--Lefschetz locus $\NL(\mathcal H_{(2,2,2);7})$, whose very general member has a rank two lattice of algebraic cycles (the expected behaviour). 
The situation here is much more intricate for many technical difficulties (see \S\S \ref{NLM24} and \ref{HCQBX}). In particular,  the discriminant may be the same for two distinct irreducible
components of the NL-locus (in this case one needs to specify also the intersection matrix on the corresponding rank two lattice). Any case, when defined, the discriminant is necessarily congruent to 0,7,12,15 modulo 16 (see Remark \ref{congdisc} and Tables  \ref{Table: unirationality} and \ref{Table: unirationality 2} for a lot of examples).

Following the approach of Fano via projection from a general line on $ X\subset\p^7$,  we also provide a direct proof that  a smooth complete intersection of three quadrics $X\subset\p^7$ is rational if it contains an algebraic 2-cycle of odd degree, essentially giving a reformulation (directly on $X$) of the results in \cite{HPT2} about  
the density of $\Rat(\mathcal H_{(2,2,2);7})$ in the Hilbert scheme  $\mathcal H_{(2,2,2);7}$, see Theorem \ref{section} and Corollary \ref{infNL}. Then we apply this result to the construction of  
many irreducible unirational components of $\Rat(\mathcal H_{(2,2,2);7})$ of codimension $2$ or $3=h^{3,1}(X)$ and of many unirational loci in  $\NL(\mathcal H_{(2,2,2);7})$ of different codimensions (see Tables \ref{Table: unirationality} and \ref{Table: unirationality 2}). We shall not insist here on the fact that our examples produce naturally also unirational irreducible components of $\NL(\mathcal M_{2,4})$ of codimension at most $3$, where
 $\mathcal M_{2,4}$ is the moduli space of surfaces of general type with $K^2=2$ and $\chi=4$ (see \S \ref{Castsurf}  for definitions).

The rationality constructions studied here, explicit or not,  
are the most  elementary ones 
from the point of view of Minimal Model Program and are  analogous to those known for cubic or  Gushel--Mukai fourfolds (see \cite{HassettPlane, AHTVA, RS1, Explicit, JEMS, HS}). 
Although $\Rat(\mathcal H_{(2,2,2);7})$ is properly contained in $\mathcal H_{(2,2,2);7}$ and although it is the union of infinitely many closed subsets, the complete description of all the components is still missing and it seems difficult to formulate a precise conjecture describing completely $\Rat(\mathcal H_{(2,2,2);7})$ (or $\SRat(\mathcal H_{(2,2,2);7}))$. The results proved here suggest that the first natural question to be settled is  if every rational smooth complete intersection of three quadrics in $\p^7$ necessarily contains an odd degree irreducible surface. 
\vskip 0.2cm

\section{Preliminaries}
\subsection{Generalities on  2-cycles on fourfolds}\label{factSar}

Let us recall some definitions.
For a smooth projective manifold  $Y$ of dimension $n=2m\geq 2$,  we say that {\it the Integral Hodge Conjecture holds for $Y$ in codimension $m$} if  
$$H^{m,m}(Y,\mathbb Z):=H^{m}(Y,\Omega_Y^m)\cap H^{2m}(Y,\mathbb Z)$$
coincides with $H^{2m}(Y,\mathbb Z)_{\alg}$, which by definition is the subgroup of $H^m(Y,\Omega^m_Y)$ generated by (classes of) algebraic cycles in $Y$ of codimension $m$.

For a smooth projective variety $Y$
of dimension $n=2m$  for which the Integral Hodge Conjecture holds in codimension $m$ (\eg for surfaces by Lefschetz Theorem;  for cubic fourfolds, see \cite{Voisin3}; for smooth complete intersections of three quadrics in $\p^7$, see section \S \ref{HCQBX}), let $$\mathcal T_Y=[H^{m}(\Omega^m_Y)\cap H^{2m}(Y,\mathbb Z)]^\perp=H^{2m}(Y,\mathbb Z)_{\alg}^\perp\subset H^{2m}(Y,\mathbb Z)$$ denote the so called {\it transcendental lattice of the middle cohomology of $Y$}.
\medskip

Since the Integral Hodge Conjecture for  two cycles holds for $\p^4$, by \cite[Lemma 15]{Voisin3} a necessary condition for the rationality of a smooth  fourfold
is that the Integral Hodge Conjecture for codimension two cycles holds on  $X$. 
\medskip

As recalled above, if $S$ is a smooth projective surface, then $H^1(S,\Omega^1_S)\cap H^2(S,\mathbb Z)=H^2(S,\mathbb Z)_{\alg}$ by Lefschetz Theorem. If $p_g(S)=0$,
then  $H^2(S,\mathbb C)=H^1(S,\Omega^1_S)$ and $\mathcal T_S=0$. Let 
$\mu:X\map W$ be a birational map with $X, W$  smooth prime Fano fourfolds. Suppose that  $\mathcal T_X\neq 0$, that
 $\mathcal T_W=0$ (for example suppose $W=\p^4$) and that we have a diagram as in \eqref{diagDivInt}. Since the blow-up of a point, of a smooth curve and  of a surface with $p_g=0$
do not affect $\mathcal T_X$ and since $\mathcal T_{X'}\simeq\mathcal T_{W'}$ for the same reason, we deduce  from $0\neq\mathcal T_X\subseteq\mathcal T_{X'}\simeq \mathcal T_{W'}$ that the base locus of $\mu\inv$ contains a surface $U\subset W$, which is necessarily irreducible (recall that  $\rho(W')=2$) and whose desingularization $\tilde U$ has  $\mathcal T_{\tilde U}\simeq \mathcal T_{W'}\simeq \mathcal T_{X'}\neq 0$. In particular $p_g(\tilde U)>0$. If moreover $\lambda:X'\to X$ is the blow-up of a point or of a curve or of a surface with $p_g=0$, then $0\neq \mathcal T_X\simeq\mathcal T_{X'}\simeq T_{W'}\simeq T_{\tilde U}$. So in many cases the trascendental cohomology determines the type of the minimal model $\overline U$ of $\tilde U$ because clearly $\mathcal T_{\overline U}\simeq \mathcal T_{\tilde U}$. 

We shall see in Examples \ref{esempioPiano}, \ref{esempio2} and Theorem \ref{esempioC14} that one is able to recover the birational type of $U$ from the diagram \eqref{diagDivInt} using the previous analysis and the next formulas.

\begin{rmk}\label{BlS}{\rm 
Let us recall that if  $Y$ is a smooth projective fourfold and if $S\subset Y$ is a smooth projective surface,  then the following isomorphisms
of Hodge structures hold for every $j=0,\ldots, 8$ (see for example \cite[Proposition 0.1.3]{Beauville1}):
\begin{equation}\label{h4split}
H^j(\Bl_S Y,\mathbb Z)\simeq H^j(Y,\mathbb Z)\oplus_{\perp}H^{j-2}(S,\mathbb Z)(-1),
\end{equation}
where $H^{j-2}(S,\mathbb Z)(-1)=H^{j-2}(S,\mathbb Z)\otimes_{\mathbb Z}\mathbb Z(-1)$ is the Tate twist, introduced to adjust
weights in order to have a morphism of Hodge structures, and where $H^l(S,\mathbb Z)=0$ for $l<0$. The homomorphism $H^j(Y,\mathbb Z)\to H^j(\Bl_SY,\mathbb Z)$ is induced by the pull-back of $\pi:\Bl_SY\to Y$. Letting 
 $E=\p(N^*_{S/Y})\to S$ (Grothendieck notation) be the exceptional divisor, the homomorphism $H^{j-2}(S,\mathbb Z)\to H^j(\Bl_SY,\mathbb Z)$ is the 
 pull-back from $S$ to $E\subset \Bl_SY$ induced by $\pi$, followed by push-forward via the inclusion of $E$.}
 \end{rmk}

Due to Theorem \ref{OG} below, one expects that the surface $\overline U$ defined above, when it exists,  is a Castelnuovo surface of general type with $K^2=2$ that we now introduce.

\subsection{Castelnuovo surfaces of general type with $K^2=2$}\label{Castsurf}

Let $C\subset\p^2$ be a smooth octic curve and let $\pi:S\to \p^2$ be the double covering of $\p^2$ branched
over $C$. Then $S$ is a simply connected minimal smooth projective surface of general type with $K_S=\pi^*(\mathcal O_{\p^2}(1))$ and $\chi(\mathcal O_S)=4$.

Castelnuovo proved that a minimal surface of general type with birational canonical map satisfies the inequality $K_S^2\geq 3p_g(S)-7$. Surfaces for which equality holds are usually  called {\it general type Castelnuovo surfaces of type $I$}. This definition has been extended to consider also the case in which $S$ is a minimal surface of general type which is a double cover of a surface of minimal degree ({\it general type Castelnuovo surfaces of type $II$}). In the particular case $K_S^2=2$, $p_g(S)=3$ and $q(S)=0$, these Castelnuovo surfaces of general type are also  the simplest examples of the so called {\it Horikawa surfaces}, i.e.  minimal surfaces of general type with $K^2=2\chi-6$, see \cite{Horikawa}. General type Castelnuovo surfaces  with $K^2=2$  have been  studied  by many authors and in different contexts, starting with the work of  Moishezon in \cite[Chapter VI]{Sha} to produce  examples of minimal surfaces of general type with $p_g=3$ such that $|3K|$ does not give a birational embedding but $|4K|$ is very ample. Indeed,  in the weighted projective space $\p(1,1,1,4)$ with variables $(x,y,z,t)$ these surfaces can be defined by  $t^2=f(x,y,z)$, where $f(x,y,z)=0$ is the equation of $C\subset\p^2=\p(1,1,1)$. This shows that the linear system $|4K_S|$ embeds $S$ in $\p^{15}$ as a surface of degree $16K_S^2=32$ and sectional genus $21$ contained in a cone over the Veronese surface $\nu_2(\p^2)\subset\p^{14}$. 

The moduli space $\mathcal M_{2,4}=\mathcal M_{K^2,\chi}$ of general type Castelnuovo surfaces with $K_S^2=2$ is an irreducible quasi-projective variety birational to the moduli space of smooth octic plane curves 
$\mathcal U\subsetneq  |H^0(\mathcal O_{\p^2}(8))|/PGL_3$ (see for example \cite{Gieseker} for the general properties of moduli spaces of surfaces of general type and \cite{Horikawa} for the connectedness in this specific case). There is an open subset $\mathcal M^0\subsetneq \mathcal M_{2,4}$, isomorphic to $\mathcal U$ and consisting of double covers of the plane ramified along  smooth octic curves. In particular, $\mathcal M^0$ is smooth and $\dim(\mathcal M_{2,4})=\dim(\mathcal U)= 44-8=36$, a fact which can also be proved directly by showing that $h^2(T_S)=0$ and $h^1(T_S)=36$ for  $[S]\in\mathcal M^0$.

Since $\chi(\mathcal O_S)=4$, Noether Formula implies $\chi_{\top}(S)=12\chi(\mathcal O_S)-K_S^2=46$. Let $[S]\in \mathcal M^0$. Since $S$ is double cover of $\p^2$ ramified along a smooth curve,  then $H^\bullet(S,\mathbb Z)$ is torsion free (see for example \cite{Cornalba}) and, being simply connected, we have $H^1(S,\mathbb Z)=0=H^3(S,\mathbb Z)$. Hence $H^2(S,\mathbb Z)\simeq \mathbb Z^{44}$, $S$ has Hodge numbers $h^{2,0}=h^{0,2}=3$, $h^{1,1}=38$ and, modulo natural identifications,  $\Pic(S)=H^1(\Omega^1_S)\cap H^2(S,\mathbb Z)$ is the subgroup of $H^2(S,\mathbb Z)$ consisting of (equivalence classes of) algebraic 1-cycles on $S$ by Lefschetz $(1,1)$-Theorem.
Moreover,  $\Pic(S)\simeq \mathbb Z\langle K_S\rangle$  for a very general $[S]\in\mathcal M^0$  (see \cite[\S 41]{Lefschetz} and also \cite[Theorem 4.4]{Ein} for a modern proof of a natural extension to weighted projective spaces). By \cite[Theorem 1]{Persson} there exist surfaces $[S]\in\mathcal M_{2,4}\setminus \mathcal M^0$ with $\rho(S)=\rk(\Pic(S))=38$, which is the maximal possible Picard number since we saw above that $ h^{1,1}(S)=38$ (no such example belonging to $\mathcal M^0$ seems to be known).

\subsection{The Noether--Lefschetz Locus of $\mathcal M_{2,4}$}\label{NLM24} 
The {\it Noether--Lefschetz Locus of $\mathcal M_{2,4}$} is by definition:
\begin{equation}\label{NLM24def}
\NL(\mathcal M_{2,4})=\{[S]\in\mathcal M_{2,4}\,:\, \rk(\Pic(S))\geq 2\}.
\end{equation} 
The locus $\NL(\mathcal M_{2,4})\cap \mathcal M^0$ contains the union of countably many irreducible subvarieties of codimension  $3=p_g(S)$ in $\mathcal M_{2,4}$, which are dense in the analytic topology on $\mathcal M^0$ (see the argument in the Introduction of \cite{CHM} for this upper bound on the codimension and also \cite[\S 5]{Voisin2} or Corollary \ref{infNL}). 
The irreducible components of $\NL(\mathcal M_{2,4})\cap \mathcal M^0$ have codimension at least two by  \cite{Esser} but there exist irreducible components of $\NL(\mathcal M_{2,4})$ having codimension one,  necessarily lying in the complement of $\mathcal M^0$. 

 In the sequel we shall explicitly construct some  irreducible components of codimension 3, whose general element is a general type Castelnuovo surface in $\mathcal M^0$ (see Examples \ref{esempioPiano}, \ref{esempio2}, Theorem \ref{esempioC14}) and whose very general element has $\Pic(S)\simeq \mathbb Z\oplus\mathbb Z$, as expected.  Although the period map is generically injective by \cite[Theorem 6.2]{Donagi}, it seems difficult to adapt the usual argument via periods, used for example for K3 surfaces in \cite[Chapter IX]{Sha}, to prove the previous expectation about $\Pic(S)$ via the period map. This essentially is also due to the fact that here $p_g(S)=3$ (and not 1 as for K3 surfaces).

An irreducible component of $\NL(\mathcal M_{2,4})\cap \mathcal M^0$ (the only cases considered in detail in the sequel) is called: {\it general} if it is of codimension  three (and hence of dimension 33) and if it is generically smooth;  {\it maximal} if it has codimension two (and hence dimension 34).  A   surface  $[S]\in\mathcal M^0$ with $\Pic(S)\simeq \mathbb Z\oplus \mathbb Z$ is called {\it very special} and these surfaces are very general elements of the general (in the above sense) components of the NL-locus studied here. 

When $S$ is a very special surface in an irreducible component of $\NL(\mathcal M_{2,4})$, we can define {\it the discriminant of $S$} as the opposite of the determinant of the intersection matrix on $\Pic(S)$. With this convention the discriminant is always positive because the determinant is always negative by Hodge Index Theorem. More generally given a rank 2 lattice $\Lambda$ with $\langle K_S\rangle\subset\Lambda\subseteq\Pic(S)$ {\it the discriminant of $\Lambda$} is the opposite
of the determinant of the intersection matrix restricted to $\Lambda$ so that it coincides with the discriminant for a very special surface in a general irreducible
component of $\NL(\mathcal M_{2,4})$.

We shall discuss in Remark \ref{congdisc} (see also the various tables below) the possible restrictions of the discriminant of very general members of  general irreducible components in 
$\NL(\mathcal M_{2,4})\cap \mathcal M^0$.

\subsection{Smooth complete intersection of three quadrics in $\p^7$}\label{nec} Let $X\subset\p^7$ be a smooth complete intersection of three quadrics. Since $N_{X/\p^7}\simeq \mathcal O_X(2)^{\oplus 3}$ and since $H^1(\mathcal O_X(2))=0$, the Hilbert scheme $\mathcal H_{(2,2,2);7}$  of smooth complete intersection of three quadrics in $\p^7$ is smooth of dimension $h^0(N_{X/\p^7})=99$ at $[X]$ and the points of $H_{(2,2,2),7}$ parametrizing smooth complete intersections are an open subset $\mathcal H\subsetneq\mathbb G(2,|H^0(\mathcal O_{\p^7}(2))|)$. The group $\Aut(X)$ is trivial for a general smooth complete intersection of three quadrics $X\subset\p^7$ and it is finite for every such complete intersection (see \cite{Benoist}). There exists a coarse moduli space $\mathcal M_{(2,2,2);7}=\mathcal H/PGL_8$ of dimension $36=99-63$, which is an affine scheme although a priori it exists  only as an algebraic space (see \cite{Benoist}, also for more general results on moduli spaces of Fano complete intersections of forms of the same degree).

The relations between a general type Castelnuovo surface with $K^2=2$ and a smooth complete intersection of three quadrics $X\subset\p^7$ are based on the fact that the discriminant curve $C_X$ of the net of quadrics defining $X$ has degree 8. Let us recall that the smoothness of $X$ does not impose any restriction on the discriminant curve (see \cite[Exemple 6.20, pg. 372]{Beauville} for examples where $C_X\subset\p^2$ is the union of eight lines) but those with smooth discriminant curve $C_X$ are an open subset  $\mathcal H^0\subsetneq \mathcal H$. Let $\widetilde{\mathcal H}$, respectively $\widetilde{\mathcal H}^0$, be the image of $\mathcal H$, respectively of $\mathcal H^0$, in $\mathcal M_{(2,2,2);7}$.

To a smooth complete intersection of three quadrics $X\subset\p^7$ with $[X]\in\widetilde{\mathcal H}$,  one can associate the double covering $S_X$ of $\p^2$ branched along $C_X$. This yields a map:
\begin{equation}\label{ratphi}
\phi:\mathcal M_{(2,2,2);7}\map\mathcal M_{2,4},
\end{equation}
defined on the open set $\widetilde{\mathcal H}$. By \cite[Remark 4.4]{Beauville},
every smooth curve $C\subset\p^2$ of degree 8 is the discriminant curve of a net of quadrics in $\p^7$, that is $\phi(\widetilde{\mathcal H}^0)=\mathcal M^0$. 

Lefschetz Theorem on Hyperplane Sections implies that  $H^\bullet(X,\mathbb Z)$ is torsion free for $[X]\in\mathcal H$. 
The intermediate cohomology $H^4(X,\mathbb C)$ of a smooth complete intersection of three quadrics  $X\subset\p^7$ has Hodge numbers $h^{3,1}=h^{1,3}=3$ and $h^{2,2}=38$ (see for example \cite[Proposition 3.2]{HTplanes}), which reminds the intermediate cohomology and the Hodge decomposition of a minimal general type Castelnuovo surface with $K^2=2$ considered above. The following result of O'Grady shows how deep the previous connection is at the level of intermediate cohomology and of integral cohomology at least for $[X]\in\mathcal H^0$.

\begin{thm}\label{OG} {\rm \cite[Theorem 0.1]{OG}} Let $X\subset\p^{2m+1}$, $m\geq 2$, be a smooth complete intersection of three quadrics. Assume that the discriminant curve  $C_X\subset\p^2$ of the net of quadrics defining $X$ is smooth. Let $\pi:S_X\to \p^2$ be the double covering branched along $C_X$ and let  $$H^{2m-2}_{\prim}(X,\mathbb C)=\coker(H^{2m-2}(\p^{2m+1},\mathbb C)\to H^{2m-2}(X,\mathbb C)).$$ There exists a morphism of Hodge structures:
\begin{equation}\label{primXS}
\psi: H^{2m-2}_{\prim}(X,\mathbb C)\to H^2_{\prim}(S_X,\mathbb C)=:\frac{H^2(S_X,\mathbb C)}{\pi^*(H^2(\p^2,\mathbb C))},
\end{equation}
which is an isomorphism of vector spaces and such that $\psi(H^{2m-2}_{\prim}(X,\mathbb Z))$ is a sublattice of
index two in $H^2_{\prim}(S_X,\mathbb Z):=\frac{H^2(S_X,\mathbb Z)}{\pi^*(H^2(\p^2,\mathbb Z))}$.

\end{thm}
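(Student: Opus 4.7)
The plan is to produce an algebraic correspondence $\Gamma \in \mathrm{CH}^m(S_X \times X)$ from the family of maximal linear subspaces of the quadrics in the net, and to deduce from its properties the three conclusions of the theorem (morphism of Hodge structures, $\mathbb{C}$-isomorphism, integral index two). First, recover $S_X$ intrinsically: a smooth quadric $Q \subset \p^{2m+1}$ of dimension $2m$ carries two connected families of $m$-planes (its two \emph{spinor rulings}), which merge into a single irreducible component precisely at the rank-$2m$ quadrics parametrized by $C_X$. Hence the relative scheme of $m$-planes in the fibers of the universal quadric over $\p^2$ is, on $\p^2 \setminus C_X$, an \'etale double cover, and its Stein factorization over $\p^2$ is, by smoothness of $C_X$, the double cover $S_X \to \p^2$ branched along $C_X$. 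Write $\pi_F: F \to S_X$ for the resulting family, with fiber over $s$ the spinor variety of $m$-planes in the ruling encoded by $s$ inside $Q_{\pi(s)}$, and $\mathcal{U} \subset F \times \p^{2m+1}$ for the tautological $\p^m$-bundle.

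To define the correspondence, set $V = \mathcal{U} \cap (F \times X) \subset F \times X$: the fiber of $V$ over $\Lambda \in F$ is the $(2,2)$-complete intersection $\Lambda \cap X \subset \Lambda \cong \p^m$ of dimension $m-2$ (well-defined since $\Lambda$ already lies in one of the three quadrics). A naive dimension count shows the direct push-forward of $[V]$ along $\pi_F \times \mathrm{id}_X$ is too large to yield a proper correspondence, so the correct $\Gamma \in \mathrm{CH}^m(S_X \times X)$ must be built by first intersecting $[V]$ with a power of a suitable tautological class on $F$---chosen so that, on a generic spinor fiber of $\pi_F$, it cuts down to a single $m$-plane---and then pushing forward. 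One then defines
\[
 \psi(\alpha) \;=\; (\mathrm{pr}_{S_X})_*\bigl([\Gamma] \cup (\mathrm{pr}_X)^*\alpha\bigr).
\]
Algebraicity makes $\psi$ a morphism of Hodge structures (after a Tate twist), and the projection formula, together with the fact that $[\Lambda \cap X] \in \mathbb{Z}\cdot h_{\Lambda}^2$ in the Chow ring of $\Lambda$, shows that $\psi(h_X^{m-1})$ lies in $\mathbb{Z}\cdot \pi^*h_{\p^2}$, so $\psi$ descends to the primitive parts.

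For the $\mathbb{C}$-isomorphism, a dimension count yields $\dim H^{2m-2}_{\prim}(X,\mathbb{C}) = \dim H^2_{\prim}(S_X,\mathbb{C})$ (both equal $43$ when $m=3$), so it suffices to prove injectivity of $\psi_\mathbb{C}$. I would establish this by computing the transposed composition $\Gamma^t \circ \Gamma$ as an endomorphism of $H^{2m-2}_{\prim}(X)$: a Schubert computation on the spinor fibers of $\pi_F$, combined with the projection formula, should reduce it to a non-zero rational multiple of the primitive projector, modulo classes proportional to $h_X^{m-1}$ (which vanish on the primitive part). Once injectivity is known, $\psi_\mathbb{C}$ is an isomorphism by the dimension count, and automatically a Hodge isomorphism because it was induced by an algebraic cycle.

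The main obstacle and the real content of the theorem is the integral index-two statement. I would match discriminants: $H^2_{\prim}(S_X, \mathbb{Z}) = K_{S_X}^{\perp}$ with $K_{S_X}^2 = 2$, while $H^{2m-2}_{\prim}(X,\mathbb{Z}) = (h_X^{m-1})^{\perp}$ with $(h_X^{m-1})^2 = \deg X = 8$. A Chern-class calculation on $X$ shows that the primitive discriminants differ by a factor of $4$, whereas $\psi$ transports the intersection pairings up to an explicit rational factor coming from the degree of the push-forward defining $\Gamma$; matching these forces the image $\psi(H^{2m-2}_{\prim}(X,\mathbb{Z}))$ to be a sublattice of index exactly $2$. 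The delicate point, which I expect to be the hardest step, is to control the spinor class $[\Lambda] \in H^{2m}(Q_t,\mathbb{Z})$ integrally across the singular fibers of the universal quadric over $C_X$, where the two rulings collapse into a single irreducible component of the maximal isotropic Grassmannian; the factor $2$ (rather than $1$ or $4$) is a genuine half-class/spinor phenomenon, reflecting the classical fact that on a smooth quadric of even dimension the top power of the hyperplane class equals \emph{twice} the class of a maximal isotropic subspace of either ruling.
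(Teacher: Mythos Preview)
The paper does not prove this theorem: it is stated with the attribution ``\cite[Theorem 0.1]{OG}'' and used as a black box, so there is no proof in the paper to compare your proposal against. Your outline is broadly in the spirit of O'Grady's original argument (building a correspondence from the families of maximal isotropic subspaces of the quadrics in the net and using that $S_X$ is the Stein factorization of the parameter space over $\p^2$), but several of your steps are admittedly heuristic---in particular the Schubert computation showing $\Gamma^t\circ\Gamma$ is a nonzero multiple of the primitive projector, and the discriminant-matching argument for the index-two claim, are stated as expectations rather than carried out. If you want to turn this into an actual proof you will need to consult O'Grady's paper directly, since the present paper offers no independent argument.
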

\medskip
The generic Torelli Theorem for intersection of three quadrics proved in \cite[Théoréme IV.4.1]{Laszlo}
and the generic Torelli Theorem for double coverings of the plane proved in \cite[Theorem 6.2]{Donagi} together with \eqref{primXS} imply that the map $\phi$ in \eqref{ratphi} is generically injective. 

\subsection{Noether--Lefschetz Locus of $\mathcal M_{(2,2,2);7}$ for codimension two algebraic cycles}\label{HCQBX}
Let us recall that for $[X]\in\mathcal M_{(2,2,2);7}$ the Integral Hodge Conjecture for codimension two cycles holds,  that is  $H^{2,2}(X,\mathbb Z):=H^4(X,\mathbb Z)\cap H^2(\Omega^2_X)$ coincides with the subgroup generated  by  algebraic cycles. Indeed, choosing a general line $l\subset X$, every $[X]\in\mathcal H$ is birational via a flop introduced by
Beauville (see \cite{Beauville1, HPT2} and \eqref{diagB0} in \S \ref{BFLOP} below) to a smooth fourfold $W'\subset\p^2\times \p^5$, which is a quadric bundle over $\p^2$ via the restriction of the projection onto the first factor $W'\to\p^2$.
By \cite[Corollaire 8.2]{CTV} the Integral Hodge Conjecture for codimension two cycles holds for $W'$  so that it holds for $X$ by \cite[Lemma 15]{Voisin3}.
\smallskip

Since  $\Pic(S)\simeq\langle \pi^*(\mathcal O_{\p^2}(1))\rangle$ for a very general $[S]\in\mathcal M^0$ and since $\phi([X])=[S_X]\in \mathcal M^0$ for $[X]\in\widetilde{\mathcal H}^0$,   letting $h$ be the classe of a hyperplane section of $X\subset\p^7$,  \eqref{primXS} implies that  
 $$H^{2,2}(X,\mathbb Z)\simeq \mathbb Z\langle h^2\rangle$$
  for a very general $[X]\in\widetilde{\mathcal H}^0$.
The {\it Noether--Lefschetz Locus of $\mathcal M_{(2,2,2);7}$} (or of $\mathcal H_{(2,2,2); 7}$) for codimension two algebraic cycles is:
$$\NL(\mathcal M_{(2,2,2);7})=\{[X]\in\mathcal M_{(2,2,2);7}\,:\, \rk(H^{2,2}(X,\mathbb Z))\geq 2\}.$$
 Moreover,  $\NL(\mathcal M_{(2,2,2);7})\cap \widetilde{\mathcal H}^0$ contains countably many irreducible analytic subvarieties of codimension  $h^{3,1}(X)=3$ in $\widetilde{\mathcal H}^0$, whose union is a dense subset of $\mathcal M_{(2,2,2);7}$ with the euclidean topology (see Corollary \ref{infNL} below). 
 
\begin{rmk}{\rm  
By Theorem \ref{OG} and by \cite[Theorem 8.2]{Esser}, the irreducible components of the $\NL$-locus intersecting $\widetilde{\mathcal H^0}$ have codimension at least two but there  exist irreducible components of $\NL(\mathcal M_{(2,2,2);7})$, necessarily in the complement of $\widetilde{\mathcal H^0}$, having codimension one. For example the locus of smooth complete intersection of three quadrics $X\subset\p^7$ containing a smooth complete intersection of two quadrics  $T\subset\p^4\subset\p^7$ is irreducible and of  codimension one. One verifies that for such a general $X\subset\p^7$ containing the smooth surface $T$, the corresponding octic curve $C_X\subset\p^2$ has a singular point so that this irreducible component lies in the complement of $\widetilde{\mathcal H}^0$.
}
\end{rmk}

We define 
\[\mathcal M_{(2,2,2);7}^{[d,n]} := \left\{[X]\in \mathcal M_{(2,2,2);7} : \begin{tabular}{l} X \mbox{ contains some algebraic $2$-cycle $S$ and the} \\ \mbox{lattice $\langle[h^2],[S]\rangle$ has intersection matrix:} \\ \mbox{$\begin{bmatrix} (h^4)_X & h^2\cdot S \\ S\cdot h^2 & (S^2)_X \end{bmatrix} = \begin{bmatrix} 8 & d \\ d & n \end{bmatrix}$}   \end{tabular} \right\} \]
Clearly
\[\NL(\mathcal M_{(2,2,2);7}) = \bigcup_{d,n\in\mathbb{Z}} \mathcal M_{(2,2,2);7}^{[d,n]} ,\]
where some  $M_{(2,2,2),7}^{[d,n]}$ may be empty or reducible. A fourfold $[X]\in \NL(\mathcal M_{(2,2,2);7})$
has {\it discriminant $\delta$} if $[X]\in\mathcal M_{(2,2,2);7}^{[d,n]}$ with $\det \begin{pmatrix} 8 & d \\ d & n \end{pmatrix} = \delta.$ 

When $[X]$ is  very general  in an irreducible component of $\NL(\mathcal M_{(2,2,2);7})$ and when $H^{2,2}(X)=\Lambda$ is a rank two lattice with $\langle [h^2]\rangle\subset\Lambda\subseteq H^{2,2}(X,\mathbb Z)$, we can define the {\it discriminant of $\Lambda$} (and of the corresponding irreducible component
of the $\NL$ locus) as the determinant of the $2\times 2$ intersection matrix of $\Lambda$. With this definition the discriminant is always positive due to Riemann Bilinear relations.

To compute the discriminant of various rank two lattices $\Lambda=\langle[h^2], [S]\rangle$ we need to evaluate $(S^2)_X$, the self-intersection of the algebraic cycle $[S]$ in $X$. We shall do this assuming that $S\subset X\subset\p^7$ is a smooth irreducible surface, which is not restrictive because one can always find such a surface $[S]\in\Lambda$ not a multiple of $h^2$ (see the argument on at the end of page 1649 in  \cite{VoisinS}). 

\begin{Proposition}\label{selfint}
Let $S$ be a smooth irreducible surface contained 
in a smooth complete intersection 
$X\subset\mathbb{P}^{r+4}$ of type $(a_1,\ldots,a_r)$, $r\geq 1$. Then 
the self-intersection $(S)_X^2$ of $S$ in $X$ is given by 
\begin{multline}\label{formulona per S2X}
     (S)_X^2 =
\left(\frac{(r+4)(r+5)}{2} - (r+5)\sum_{1\leq i\leq r} a_i + \left(\sum_{1\leq i\leq r} a_i\right)^2
- \sum_{1\leq i<j\leq r} a_i a_j \right) H_S^2 \\ 
 + \left(r+5 - \sum_{1\leq i\leq r} a_i \right) H_S K_S + K_S^2 - c_2(T_S) ,
\end{multline}
where  $H_S$  denotes the class of a  hyperplane section of $S\subset\p^{r+4}$.
In particular, if $X\subset\mathbb{P}^7$ is a smooth complete intersection of three quadrics,
we have 
\begin{equation}\label{formulina}
     (S)_X^2 = 4 H_S^2+2 H_S K_S + K_S^2-c_2(T_S).
\end{equation}
\end{Proposition}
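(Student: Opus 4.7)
The plan is the standard self-intersection formula for a codimension two smooth subvariety. Since $X$ has dimension $(r+4)-r = 4$ and $S$ is a smooth surface, the normal bundle $N_{S/X}$ has rank $2$, and the excess/self-intersection formula gives
\[
(S)_X^2 = \deg c_2(N_{S/X}).
\]
So the whole task is to compute $c_2(N_{S/X})$ in terms of $H_S$, $K_S$ and $c_2(T_S)$.

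First I would use the normal bundle exact sequence
\[
0 \longrightarrow T_S \longrightarrow T_X|_S \longrightarrow N_{S/X} \longrightarrow 0,
\]
which in the Chow (or cohomology) ring of $S$ yields $c(N_{S/X}) = c(T_X|_S)/c(T_S)$. Expanding to second order, with $c_1(T_S) = -K_S$, this gives
\[
c_2(N_{S/X}) \;=\; c_2(T_X|_S) \;+\; K_S\cdot c_1(T_X|_S) \;+\; K_S^2 \;-\; c_2(T_S).
\]

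Next I would compute $c(T_X|_S)$ from the normal sequence of $X \subset \mathbb{P}^{r+4}$,
\[
0 \longrightarrow T_X \longrightarrow T_{\mathbb{P}^{r+4}}|_X \longrightarrow \bigoplus_{i=1}^r \mathcal{O}_X(a_i) \longrightarrow 0,
\]
which gives $c(T_X) = (1+H)^{r+5}/\prod_{i=1}^r (1+a_i H)$. Restricting to $S$ and truncating above degree $2$, set $s=\sum_i a_i$ and $e = \sum_{i<j} a_i a_j$; then
\[
c_1(T_X|_S) = (r+5 - s)\, H_S, \qquad c_2(T_X|_S) = \Bigl(\tfrac{(r+4)(r+5)}{2} - (r+5)s + s^2 - e \Bigr) H_S^2,
\]
using $(1+sH+eH^2)^{-1} \equiv 1 - sH + (s^2-e) H^2 \pmod{H^3}$ and $(1+H)^{r+5} \equiv 1 + (r+5) H + \binom{r+5}{2} H^2 \pmod{H^3}$.

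Substituting these two expressions into the formula for $c_2(N_{S/X})$ displayed above and taking the degree on $S$ gives exactly \eqref{formulona per S2X}. The particular case follows by plugging in $r=3$, $a_1=a_2=a_3=2$: one has $s=6$, $e=12$, $r+5=8$, so the coefficient of $H_S^2$ becomes $28 - 48 + 36 - 12 = 4$, the coefficient of $H_S K_S$ becomes $8-6=2$, and \eqref{formulina} drops out. There is no real obstacle here beyond keeping track of the expansions; the only subtlety worth noting is that the self-intersection is intrinsically computed in $X$, not in $\mathbb{P}^{r+4}$, which is why the ambient projective space contributes only through $c(T_X|_S)$ and not directly.
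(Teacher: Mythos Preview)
Your proof is correct and follows essentially the same approach as the paper: both use the self-intersection formula $(S)_X^2 = c_2(N_{S/X})$, then compute $c_2(N_{S/X})$ from the tangent/normal sequences of $S\subset X$ and $X\subset\mathbb{P}^{r+4}$, together with the Euler sequence. The only cosmetic difference is that you phrase the Chern class computations as quotients of total Chern classes and expand, whereas the paper extracts $c_1$ and $c_2$ term by term; the substance is identical.
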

\begin{proof}
By the Self-Intersection Formula  $(S)_X^2=c_2(N_{S/X})$, where  $N_{S/X}$ is the normal bundle of  $S$ in $X$.
From the 
short exact sequence 
$0\to T_S \to T_X|_S \to N_{S/X} \to 0$
of tangent and normal bundles on $S$ 
we deduce 
\begin{align}
c_1(N_{S/X}) &= c_1(T_X|_S) - c_1(T_S) = c_1(T_X|_S) + K_S; \nonumber \\
\label{eq: c2NSX}
c_2(N_{S/X}) &= c_2(T_X|_S) - c_2(T_S) - c_1(T_S)\cdot c_1(N_{S/X}) \\
              &= c_2(T_X|_S) - c_2(T_S) + K_S \cdot (c_1(T_X|_S) + K_S). \nonumber 
\end{align}
Similarly, 
from the short exact sequence 
$0\to T_X \to T_{\mathbb{P}^{r+4}}|_X \to N_{X/\mathbb{P}^{r+4}} \to 0$
of tangent and normal bundles on $X$ we deduce
\begin{align}
\label{eq: c1TX}
c_1(T_X) &= c_1(T_{\mathbb{P}^{r+4}}|_X) - c_1(N_{X/\mathbb{P}^{r+4}}); \\
\label{eq: c2TX}
c_2(T_X) &= c_2(T_{\mathbb{P}^{r+4}}|_X) - c_1(T_X)\cdot c_1(N_{X/\mathbb{P}^{r+4}}) - c_2(N_{X/\mathbb{P}^{r+4}}).
\end{align}
Recalling  that
$N_{X/\mathbb{P}^{r+4}} = \mathcal{O}_X(a_1)\oplus \cdots \oplus \mathcal{O}_X(a_r)$,
we obtain 
\begin{align}
c(N_{X/\mathbb{P}^{r+4}}) &= (1+a_1 H_X)\cdots(1+a_r H_X) \nonumber \\
\label{eq: cNX}
&= 
1 + \left(\sum_{1\leq i\leq r} a_i\right) H_X + \left(\sum_{1\leq i<j\leq r} a_i a_j \right) H_X^2 + \cdots .
\end{align}
Moreover, the Euler exact sequence yields
\begin{equation}
\label{eq: cTP}
c(T_{\mathbb{P}^{r+4}}) = (1 + H_{\mathbb{P}^{r+4}})^{r+5} = 1 + (r+5) H_{\mathbb{P}^{r+4}} + 
\frac{(r+4)(r+5)}{2} H_{\mathbb{P}^{r+4}}^2 + \cdots .
\end{equation}
Now using \eqref{eq: cNX} and \eqref{eq: cTP},  relations 
\eqref{eq: c1TX} and \eqref{eq: c2TX} 
become
\begin{align}
\label{eq: c1TX-2}
c_1(T_X) &= \left(r+5 - \sum_{1\leq i\leq r} a_i \right) H_X ; \\
\label{eq: c2TX-2}
c_2(T_X) &= 
\left(\frac{(r+4)(r+5)}{2} - (r+5)\sum_{1\leq i\leq r} a_i + \left(\sum_{1\leq i\leq r} a_i\right)^2
- \sum_{1\leq i<j\leq r} a_i a_j \right) H_X^2 .
\end{align}
Finally, using \eqref{eq: c1TX-2} and \eqref{eq: c2TX-2}, we deduce \eqref{formulona per S2X} from \eqref{eq: c2NSX}.
\end{proof}

\begin{rmk}\label{congdisc}
{\rm The formula \eqref{formulona per S2X} 
can be easily applied,  
using any of the available algorithms 
to compute the topological Euler characteristic $\chi_{\top}(S)=c_2(T_S)$.
We also recall Noether Formula: $K_S^2=12\chi(\mathcal O_S) - \chi_{\top}(S)$,
which allows us to express  formula \eqref{formulina} in terms of the following four invariants of $S$: the degree $\deg(S)$, the sectional genus $g(S)$,
     $K_S^2$, and $\chi(\mathcal{O}_S)$:
     \begin{equation}\label{formulina2}
      (S)_X^2 = 2 \deg(S) + 4 g(S) + 2 K_S^2 - 12 \chi(\mathcal{O}_S) - 4 .
     \end{equation}
It follows that the discriminant $d$ of $\langle [h^2], [S]\rangle$ is:
$$d = \det \left(\begin{smallmatrix} H_X^4& H_X^2\cdot [S]\\ [S] \cdot H_X^2 & (S)_X^2 \end{smallmatrix}\right) = \det 
\left(\begin{smallmatrix} 8 & \deg(S) \\ \deg(S) & (S)_X^2 \end{smallmatrix}\right),$$
which satisfies the following condition:
\begin{equation}\label{valori ipotetici}
     d \equiv 0,7,12,15\mod 16.
\end{equation}
Tables ~\ref{Table: unirationality} and ~\ref{Table: unirationality 2} below will show  
that these values are actually possible 
provided that $d$ is small and $d\geq 16$. 
}
\end{rmk}

The following proposition allows us to estimate the codimension 
of irreducible components of the Noether--Lefschetz locus in $\mathcal M_{(2,2,2);7}$.
It follows from a standard semicontinuity argument (see \cite{Nuer-Unir} and \cite[Prop.~1.6]{S-expMath}).
\begin{Proposition}\label{conti_parametri}
Let $S\subset\mathbb{P}^7$ be a smooth irreducible surface
which is contained in a smooth complete intersection 
of three quadrics $X\subset \mathbb{P}^7$.
Assume that 
$h^1(N_{S/\mathbb{P}^7})=0$,
 $h^1(\mathcal O_S(2))=0$, and that
 $h^0(\mathcal{I}_{S/\mathbb{P}^7}(2))=h^0(\mathcal{O}_{\mathbb{P}^7}(2))-\chi(\mathcal O_S(2))$.
Then 
there is a unique irreducible component $\mathcal S\subset\mathrm{Hilb}(\mathbb{P}^7)$ 
of the Hilbert scheme 
of $\mathbb{P}^7$ that contains $[S]$, and the locus in
$\mathbb{G}(2,\mathbb{P}(H^0(\mathcal{O}_{\mathbb{P}^7}(2))))$
 of 
complete intersections of three quadrics 
 containing some surface 
of the family $\mathcal S$ has codimension at most 
\begin{equation*}
99 - \left(h^0(N_{S/\mathbb{P}^7}) + \dim\mathbb{G}(2, \mathbb{P}(H^0(\mathcal{I}_{S/\mathbb{P}^7}(2)))) - h^0(N_{S/X}) \right).
\end{equation*}
\end{Proposition}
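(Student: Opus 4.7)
The plan is to carry out a standard incidence-variety dimension count, using the three hypotheses exactly where they are needed.

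First, because $h^1(N_{S/\mathbb{P}^7})=0$, the Hilbert scheme $\mathrm{Hilb}(\mathbb{P}^7)$ is smooth at $[S]$ of dimension $h^0(N_{S/\mathbb{P}^7})$. A smooth point lies on a unique irreducible component, which gives the existence and uniqueness of $\mathcal S$ and identifies $\dim\mathcal S=h^0(N_{S/\mathbb{P}^7})$.

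Next, I would form the incidence variety
\[
\mathcal I=\{(S',[Y])\in\mathcal S\times\mathbb G(2,\mathbb P(H^0(\mathcal O_{\mathbb P^7}(2)))) : S'\subset Y\},
\]
with projections $p_1:\mathcal I\to\mathcal S$ and $p_2:\mathcal I\to\mathbb G(2,\mathbb P(H^0(\mathcal O_{\mathbb P^7}(2))))$. The fiber of $p_1$ over $[S']$ is exactly $\mathbb G(2,\mathbb P(H^0(\mathcal I_{S'/\mathbb P^7}(2))))$. The two remaining hypotheses are used precisely to pin down this fiber dimension in a neighborhood of $[S]\in\mathcal S$. Indeed, from the standard sequence
\[
0\to H^0(\mathcal I_{S'/\mathbb P^7}(2))\to H^0(\mathcal O_{\mathbb P^7}(2))\to H^0(\mathcal O_{S'}(2))\to H^1(\mathcal I_{S'/\mathbb P^7}(2))\to 0,
\]
one always has $h^0(\mathcal I_{S'/\mathbb P^7}(2))\geq h^0(\mathcal O_{\mathbb P^7}(2))-h^0(\mathcal O_{S'}(2))$. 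The openness of the condition $h^1(\mathcal O_{S'}(2))=0$ (upper semicontinuity) combined with the deformation invariance of $\chi(\mathcal O_{S'}(2))$ shows that for $[S']$ near $[S]$ one has $h^0(\mathcal O_{S'}(2))=\chi(\mathcal O_S(2))$. The hypothesis $h^0(\mathcal I_{S/\mathbb P^7}(2))=h^0(\mathcal O_{\mathbb P^7}(2))-\chi(\mathcal O_S(2))$ (surjectivity of the restriction at $S$) therefore gives the lower bound $h^0(\mathcal I_{S'/\mathbb P^7}(2))\geq h^0(\mathcal I_{S/\mathbb P^7}(2))$ for $[S']$ in a neighborhood of $[S]$; the matching upper bound is upper semicontinuity of $h^0$. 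Hence $p_1$ has locally constant fiber dimension equal to $\dim\mathbb G(2,\mathbb P(H^0(\mathcal I_{S/\mathbb P^7}(2))))$ in a neighborhood of $p_1^{-1}([S])$, so the component $\mathcal I_0\subset\mathcal I$ through $([S],[X])$ has
\[
\dim\mathcal I_0 \;=\; h^0(N_{S/\mathbb P^7}) + \dim\mathbb G(2,\mathbb P(H^0(\mathcal I_{S/\mathbb P^7}(2)))).
\]

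Finally, the fiber of $p_2$ over any $[Y]\in p_2(\mathcal I_0)$ sits inside $\mathrm{Hilb}(Y)$, whose tangent space at a point parametrizing a surface $S''\subset Y$ is $H^0(N_{S''/Y})$; by upper semicontinuity applied on $\mathcal I_0$ the generic fiber dimension of $p_2$ is at most $h^0(N_{S/X})$. Combining,
\[
\dim p_2(\mathcal I_0) \;\geq\; \dim\mathcal I_0 - h^0(N_{S/X}) \;=\; h^0(N_{S/\mathbb P^7})+\dim\mathbb G(2,\mathbb P(H^0(\mathcal I_{S/\mathbb P^7}(2))))-h^0(N_{S/X}),
\]
and subtracting from $99=\dim\mathbb G(2,\mathbb P(H^0(\mathcal O_{\mathbb P^7}(2))))$ yields the claimed bound on the codimension.

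The only delicate step is the one in the middle: the three numerical hypotheses conspire so that, near $[S]$, both $h^0(\mathcal O_{S'}(2))$ and $h^0(\mathcal I_{S'/\mathbb P^7}(2))$ are locally constant (that is, $[S']\mapsto h^0(\mathcal I_{S'/\mathbb P^7}(2))$ has a local minimum at $[S]$ forced by surjectivity of restriction, while semicontinuity prevents it from dropping). If one of these hypotheses were dropped, $p_1$ could have jumping fiber dimensions and the dimension count through the incidence would only yield a weaker inequality; this is the main obstacle, but the hypotheses are tailored to remove it.
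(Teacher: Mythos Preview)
Your proof is correct and is precisely the ``standard semicontinuity argument'' that the paper invokes without giving details (the paper simply cites \cite{Nuer-Unir} and \cite[Prop.~1.6]{S-expMath}). You have spelled out carefully how each of the three hypotheses is used to control the fibers of the two projections of the incidence correspondence, which is exactly what the cited references do.
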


\subsection{The blow-up of a complete intersection of three quadrics in $\p^7$ along a general line}\label{BFLOP}

Let $\pi=\pi_l:\p^7\map \p^5$ be the projection from a line $l\subset X\subset\p^7$ with $X$ an arbitrary smooth complete intersection of three quadrics. For every $r\in X\setminus l$ we have that $\overline{\pi\inv(\pi(r))}=\overline{\langle r,l\rangle\cap X\setminus l}$, that is the closure of the fiber through $r$  consists either of the plane spanned
by $r$ and $l$ or consists of line or it is a point. To see what case occurs we consider
 $\tilde\psi:\Bl_lX\to \p^5$, the resolution of the rational map $\pi$, and the equations of $\Bl_lX\subset \Bl_l\P^7$, which correspond to the data of  a $3\times 3$ matrix:
\begin{equation}\label{eqZ}
A=\left(\begin{matrix}
L&L'&L''\\
M&M'&M''\\
F&F'&F''
\end{matrix}\right ),
\end{equation} 
with $L, L', L'', M, M', M''$ linear forms on $\p^5$ and $F, F', F''$  quadratic forms on $\p^5$. Indeed, a quadric through $l$ has equation corresponding to a column of \eqref{eqZ} and the computation of the  fiber over a point $q\in\p^5$ is equivalent to solve the corresponding system with variables on the left and with $A$ evaluated in $q$. The determinant of \eqref{eqZ} defines a quartic hypersurface $Z\subset\p^5$, which is the image of $\Bl_lX$ via $\tilde\psi$ (for $q\in\p^5\setminus Z$ the fiber $\tilde\psi^{-1}(q)$ is empty, being the intersection of three linearly independent lines in $\p^2_q=\langle l, q\rangle$). If $C=\{r\in Z: \rk(A(r))\leq 1\}$, then for $q\in Z\setminus C$ the fiber $\tilde\psi^{-1}(q)$ is a point in $\p^2_q$; for $q\in C$ either the fiber $\tilde\psi^{-1}(q)$ corresponds to  a line contained in $X$ and cutting $L$ in a point (if $\rk(A(q))=1$) or, for $q$ in the locus defined by the entries of \eqref{eqZ} (equivalently
if $\rk(A(q))=0$), the fiber $\tilde\psi^{-1}(q)$ is a plane $\p^2_q$ through $l$. 
If $X\subset\p^7$ is smooth, then it contains at most a finite number of planes and it is covered by lines. In particular,  a general line $l\subset X$ is disjoint from the planes in $X$. Hence if $l\subset X$ is general,  the rank of the corresponding matrix  $A$ is positive at any point of $\p^5$ and the positive dimensional fibers of $\tilde\psi:\Bl_lX\to Z$ are isomorphic to $\p^1$. 

If $l\subset X$ is a general line, then $N_{l/X}$ is isomorphic  to $\O_{\p^1}^{\oplus 3}$.
Let $l\subset X\subset \p^7$ be a general line on an arbitrary smooth complete intersection of three quadrics and 
let $E\subset \Bl_lX$ be the exceptional divisor of $p:\Bl_lX\to X$.  Then $E\simeq \p^1\times \p^2$ since $N_{l/X}\simeq \O_{\p^1}^{\oplus 3}$ while  the restriction of $\tilde\psi$ to $E$ is given by the tautological line bundle of $\O_{\p^1}(1)^{\oplus 3}$ ($\tilde\psi$ is defined by the strict transform of hyperplanes through $l$ producing a twist by $\O_{\p^1}(1)$ in $N_{l/X}^*$). In particular, $\tilde\psi_{|E}:E\to Z\subset\p^5$ is an embedding and $\tilde E=\tilde\psi(E)\subset\p^5$ is defined by the three $2\times 2$ quadratic minors of \eqref{eqZ}. From \eqref{eqZ} we deduce that $Z$ is a quartic hypersurfaces through $\tilde E=\p^1\times\p^2$. Viceversa, a quartic hypersurface through $\p^1\times \p^2\subset\p^5 $ is the determinant of a matrix as in \eqref{eqZ} because the homogeneous ideal of $\p^1\times\p^2\subset\p^5$ is generated by the $2\times 2$ minors of a $2\times 3$ matrix of linear forms on $\p^5$ (see also \cite[\S 4 and 5]{Roth2} for the birational representation of other smooth Mukai fourfolds on special quartic hypersurfaces in $\p^5$). 

The positive dimensional fibers of $\tilde\psi$ are lines so that the exceptional locus of $\tilde\psi:\Bl_lX\to Z$ is an equidimensional surface $\tilde S$, given by the strict transforms of the lines in $X$ cutting $l$. Then $\tilde\psi(\tilde S)=C$ is an equidimensional curve, $\tilde\psi_{|\tilde S}:\tilde S\to C$ is a $\p^1$-fibration such that $\tilde C=E\cap \tilde S$ is a curve mapping isomorphically to $C$ (that is a section of $\tilde\psi_{|\tilde S}$) and the singular locus of $Z$ is supported on $C$. For a general $X\subset\p^7$ through $l$ the curve $C$ is connected
and smooth (one looks to the determinantal description of $\Sing(Z)$) so that the same is true for $\tilde S$. Without difficulty but with a careful analysis of the normal bundle of the fibres of $\tilde\psi_{\tilde S}$, one can prove that the same conclusions hold for a general line on an arbitrary smooth  $X\subset\p^7$

In conclusion,
for a general $l\subset X\subset \p^7$ with $X$ an arbitrary smooth complete intersection, the scheme $\Sing(Z)$ coincides with $C$, it is a smooth irreducible curve of degree 17 and genus 18 by Giambelli's Formulas, whose homogenous ideal is generated  by  the three degree two $2\times 2$ minors of $A$ and by the  six $2\times 2$ cubic minors of $A$. The map $\tilde\psi: \Bl_lX\to Z$ is a small birational contraction such that $\tilde\psi^{-1}(C)=\tilde S$ is a smooth irreducible surface with  $\tilde\psi:\tilde S\to C$ a $\p^1$-bundle over $C$. Let $X'=\Bl_lX\stackrel{p}\to X$. Since $-K_{X'}=2H'-2E=2(H'-E)$ with $H'=p^*(H)$ is generated by global sections and since $-K_{X}\cdot F=0$ for every fibre of $\tilde S\to C$, the surface $\tilde S$ can be flopped giving a fourfold $W'$ and a small birational contraction $\phi:W'\to Z$ such that $\tau=\phi\inv\circ\tilde\psi$ is an isomorphism in codimension one. This process can be done simply and explicitly in this case.

In $\p^2\times \p^5$ with coordinates $(y_0:y_1:y_2)\times (x_0:\ldots:x_5)$ we can consider the system of bihomogeneous equations:
\begin{equation}\label{bihom}
\left(\begin{matrix}
L&L'&L''\\
M&M'&M''\\
F&F'&F''
\end{matrix}\right )\cdot \left(\begin{matrix}
y_0\\
y_1\\
y_2
\end{matrix}\right )=\left(\begin{matrix}
0\\
0\\
0
\end{matrix}\right ).
\end{equation}  
The set of solution $W'\subset\p^2\times \p^5$ is the complete intersection of two divisors of type $(1,1)$ and of a divisors of type $(1,2)$.
The projection onto the first factor $\tilde\pi_1:W'\to \p^2$ endows $W'$ with a structure of quadric bundle over $\p^2$, whose fiber
over $(\lambda_0:\lambda_1:\lambda_2)\in\p^2$ is the quadric in $(\lambda_0:\lambda_1:\lambda_2)\times\p^5$ of equations
\begin{equation}\label{bihom2}
\left(\begin{matrix}
L&L'&L''\\
M&M'&M''\\
F&F'&F''
\end{matrix}\right )\cdot \left(\begin{matrix}
\lambda_0\\
\lambda_1\\
\lambda_2
\end{matrix}\right )=\left(\begin{matrix}
0\\
0\\
0
\end{matrix}\right ).
\end{equation}
The morphism  $\tilde\pi_2:W'\to Z\subset \p^5$ given by the restriction of the second projection to $W'$
is  birational onto its image $Z\subset\p^5$ and it induces an isomorphism between $W'\setminus\tilde\pi_2^{-1}(C)$ and $Z\setminus C$. The exceptional locus of $\tilde\pi_2:W'\to Z$ is a smooth surface $\tilde T$,
which is a $\p^1$-bundle over $C$, and $\tilde\pi_2:W'\to Z$ is a small birational contraction such that
$$W'\setminus \tilde T\simeq   Z\setminus C\simeq X\setminus \tilde S.$$

Let $\tilde E=\tilde\psi(E)\subset Z$ be the Segre 3-fold, whose homogeneous ideal is generated  by the three $2\times 2$ minors of the first two lines of $A$. Let $\alpha:Z\map \p^2$ be given by $|H^0(\mathcal I_{\tilde E}(2)|_{|Z}$ and whose general fiber $\tilde F$ is a $\p^3$ intersecting $\tilde E$ along a quadric surface $\tilde Q$. Then the  closure of a general fiber
of $\alpha$ is a quadric surface $\tilde Q'$, which from one hand is such that $\tilde F\cap Z=\tilde Q\cup \tilde Q'$ and form the other hand is the  projection of a fiber of $\tilde\pi_1:W'\to \p^2$.
So we  constructed a diagram:

\begin{equation}\label{diagB0}
\UseTips
 \newdir{ >}{!/-5pt/\dir{>}}
 \xymatrix{          
 X'=\Bl_lX\ar[d]_{p} \ar@{-->}[rr]^{\tau}\ar[rd]_{\tilde\psi}&                             &W'\ar[ld]^{\tilde\pi_2}\ar[d]^{\tilde\pi_1}\\
X\ar@{-->}[r]_{\pi_l}&Z\ar@{-->}[r]_{\alpha}&\p^2.
}
\end{equation}
The map $\tau=\tilde\pi_2^{-1}\circ\tilde\psi$ is the flop of the surface $\tilde S$ (and $\tau^{-1}$ is the flop of $\tilde T$) and the external part of the diagram is an elementary link of type $I$ according to \cite[pg. 391]{HMK} (here $\tilde\pi_1;W'\to \p^2$ is an elementary contraction endowing $W'$ with a structure
of Mori fiber space over $\p^2$).

The top part of  diagram  \eqref{diagB0}
has been constructed in a little bit  different way by Beauville in \cite[Exemple 1.14]{Beauville1} so that the map $\tau$ will be dubbed as the {\it Beauville Flop} although flops were
defined and studied some years after the appearance of \cite{Beauville1}. This is also the first instance of the flops studied by many authors in different contexts  and considered also in \cite{JEMS}. 

The rational map 
$$\beta=\alpha\circ \pi_l:X\map\p^2$$
was studied by Fano in \cite[\S 3]{FanoBer} (see also \cite[\S 2]{Roth}) and $\beta$ (or $\alpha$) will be dubbed as the {\it Fano map of $X\subset\p^7$} when $l\subset X$ is a general line.

 \subsection{Rationality versus  stable non rationality of smooth complete intersections of three quadrics in $\p^7$}\label{RatM2227}

Let us introduce  some definitions and recall some recent results about the rationality and stable non rationality of smooth complete intersections of three quadrics in $\p^7$. Let
\begin{equation}\label{RatC}
\Rat(\mathcal M_{(2,2,2);7})=\{[X]\in\mathcal M_{(2,2,2);7}\;:\; X\text{ is rational }\}\subseteq\mathcal M_{(2,2,2);7}.
\end{equation}

By results of de Fernex and Fusi (see \cite{dFF}) the locus $\Rat(\mathcal M_{(2,2,2);7})$ is the union of at most countably many constructible sets (in the Zariski topology). By a result of  Kontsevich and Tschinkel  (see \cite[Theorem 1]{KontsevichTschinkelInventiones}) the set  $\Rat(\mathcal M_{(2,2,2);7})$ is stable under specialization so that $\Rat(\mathcal M_{(2,2,2);7})$ is the union of at most countably many irreducible closed algebraic subsets. 

The main results of Hassett, Pirutka and Tschinkel in \cite{HPT2} assure that the very general $[X]\in\mathcal M_{(2,2,2);7}$ is not stably rational and that there exist countably many irreducible subvarieties of $\Rat(\mathcal M_{(2,2,2);7})$ of codimension $3=h^{3,1}(X)$, whose union is  dense in $\mathcal M_{(2,2,2);7}$ in the analytic topology and which are irreducible components of  $\NL(\mathcal M_{(2,2,2);7}).$

 The countably many loci  in $\Rat(\mathcal M_{(2,2,2);7})$ produced in  \cite{HPT2} have not an explicit description  and their existence  is based on non constructive techniques exhibiting  sections of the associated quadric bundles over $\p^2$ obtained via the Beauville flop. 
 \medskip
 
The Fano Map allows us to  provide a little bit more explicit analysis of the infinitely many components in $\Rat(\mathcal M_{(2,2,2);7})$ constructed in \cite{HPT2} 
and revisited here to provide concrete examples (in \cite{HPT2} the authors worked on the image $W'$ of $X$ via a Beauville Flop
and not directly on $X$).

\begin{thm}\label{section} Every smooth complete intersection of three quadrics in $\mathbb{P}^7$ 
containing a $2$-cycle of odd degree is rational.
\end{thm}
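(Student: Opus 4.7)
The plan is to use the Fano map $\beta=\alpha\circ\pi_l$ of a general line $l\subset X$ together with the Beauville flop diagram \eqref{diagB0} to realize $X$ birationally as a quadric surface bundle $\tilde\pi_1\colon W'\to\PP^2$, and then to apply Springer's theorem to the generic fiber $W'_\eta\subset\PP^3_{k(\PP^2)}$. Concretely, it will suffice to exhibit an algebraic $2$-cycle on $W'$ meeting a general fiber of $\tilde\pi_1$ in a $0$-cycle of odd degree: for then $W'_\eta$ contains a zero-cycle of odd degree, hence by Springer's theorem has a $k(\PP^2)$-rational point, hence is rational over $k(\PP^2)$, so $W'$ is birational to $\PP^2\times\PP^2$, and therefore $X$ is rational.

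By the Integral Hodge Conjecture for codimension two cycles on $X$ (\S\ref{HCQBX}) and adjustment modulo multiples of $H^2$, the odd-degree $2$-cycle on $X$ may be represented by an irreducible surface $T\subset X$ of odd degree $d$. Let $T'\subset W'$ denote the image of $T$ through $p^{-1}$ and the flop $\tau$. Because $l$ is general, $T$ is disjoint from $l$ and from the surface $\tilde S\subset X'$ contracted by $\tau$, so intersection numbers with the general fiber transport through the flop without correction: $T'\cdot F=T\cdot F_X$ on $X$, where $F$ and $F_X$ are the corresponding general fibers of $\tilde\pi_1$ and $\beta$ respectively.

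The key computation is $T\cdot F_X=d$, obtained as follows. Since $\pi_l\colon X\dashrightarrow Z$ is an isomorphism on $X\setminus l$, and a general fiber of $\beta$ corresponds to the mobile component $\tilde Q'$ of $\tilde F\cap Z$, the relation $\tilde F\cap Z=\tilde Q\cup\tilde Q'$ from \S\ref{BFLOP} gives $[\tilde Q]+[\tilde Q']=h^2|_Z$ in $H^4(Z)$. For general $l$ the line avoids $T$, so $\pi_l|_T$ is birational and $\deg\pi_l(T)=d$; and for $l$ and $\tilde F$ generic the surfaces $\pi_l(T)$ and $\tilde Q$ in $\PP^5$ are in general position, so $\pi_l(T)\cap\tilde Q=\emptyset$ by the expected-dimension count $2+2-5=-1$. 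Therefore
\[
T\cdot F_X \;=\; \pi_l(T)\cdot\tilde Q' \;=\; \deg\pi_l(T)-\pi_l(T)\cdot\tilde Q \;=\; d-0 \;=\; d,
\]
which is odd.

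The main obstacle I anticipate is the transversality step: for a general line $l\subset X$ one must simultaneously have $l\cap T=\emptyset$ and $\pi_l(T)\cap\tilde Q=\emptyset$ in $\PP^5$. The first is immediate from a dimension count on $X$, since a surface and a general line on a smooth fourfold do not meet. The second is more subtle because $\tilde Q$ itself depends on $l$ (through $\tilde E=\pi_l(E)$) and on the chosen fiber $\tilde F$ of $\alpha$; nevertheless it follows from the expected-dimension count together with the fact that the family of lines on $X$ is positive-dimensional, so one can move $l$ (keeping $T$ fixed) until $\pi_l(T)$ is in general position with respect to $\tilde Q$. Once these transversalities are granted, Springer's theorem and the birational identifications above close the argument.
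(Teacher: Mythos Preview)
Your approach is essentially the same as the paper's: reduce to an irreducible odd-degree surface, project from a general line $l$, decompose the fiber $\tilde F\cap Z=\tilde Q\cup\tilde Q'$, show the odd-degree surface meets $\tilde Q'$ in odd degree, and invoke Springer. The paper, however, works directly on $Z$ with the rational fibration $\alpha:Z\dashrightarrow\PP^2$ and never passes through the flop to $W'$; this spares you the claim that $T$ is disjoint from the flopped surface $\tilde S$ (which is generally false: two surfaces in a fourfold meet in points) and the question of how intersection numbers of surfaces behave under a flop.

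More importantly, the ``main obstacle'' you flag is not an obstacle at all, and the paper's resolution is much cleaner than a general-position argument. The point is that $\tilde Q\subset\tilde E$, and $\tilde E=\tilde\psi(E)$ is precisely the image of the exceptional divisor of $\Bl_lX\to X$; hence $\pi_l^{-1}(\tilde E)=l$. Therefore the single condition $T\cap l=\emptyset$ already forces $\pi_l(T)\cap\tilde E=\emptyset$, and in particular $\pi_l(T)\cap\tilde Q=\emptyset$. No expected-dimension count and no moving of $l$ is required beyond choosing $l$ off $T$. With this observation your displayed computation
\[
T\cdot F_X=\pi_l(T)\cdot\tilde Q'=\deg\pi_l(T)-\pi_l(T)\cdot\tilde Q=d-0=d
\]
goes through exactly as in the paper (which phrases it as $\deg(\PP^3_q\cap\pi_l(S))=\deg(Q_q\cap\pi_l(S))$), and Springer finishes the proof.
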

\begin{proof}  Let  $S\in H^{2,2}(X,\mathbb Z)$ be (the class of) an algebraic $2$-cycle of odd degree $S\cdot h^2=2m+1$, whose support consists of a finite number of irreducible surfaces $S_1,\ldots, S_m$. 
Since $h^4=\deg(X)=8$, we can also suppose $S\not\in \langle h^2\rangle$.
Let  $S=\sum_{i=1}^ma_i S_i$ with $ a_i\in \mathbb Z\setminus 0$. From  $2m+1=\sum_{i=1}^ma_i \deg(S_i)$, we deduce the existence of at least one $i\in\{1,\ldots, m\}$ such that $\deg(S_i)$ is
odd. So we can suppose $S=S_i$. 
There exists a general line $l$ such that $l\cap S=\emptyset$. Let $\pi_l:X\map Z\subset\p^5$ be the projection from $l$, 
let 
 $\tilde E\subset Z\subset\p^5$ be the Segre 3-fold $\p^1\times\p^2$ determined by $l$ and let $\alpha:Z\map \p^2$ the restriction to $Z$ of the map $\psi:\p^5\map \p^2$ defined by $|H^0(\mathcal I_{\tilde E}(2))|$. The closure of every fiber $\psi^{-1}(q)$ is a linear space $\p^3_q$ cutting $\tilde E$ along a smooth quadric $\tilde Q_q$ and $Z$ along  the quartic surface $Q_q\cup \tilde Q_q$. Then the closure of the fiber of $\alpha^{-1}(q)$ is the quadric $Q_q$ which is smooth for $q$ general, showing that $\alpha$ is a rational fibration in quadric surfaces whose general member is smooth. Then $S_q=\pi_l^{-1}(Q_q)=\pi_l^{-1}(\p^3_q)\subset X$ is an irreducible surface containing $l$ and whose class is $h^2$. Then $2m+1=\deg(S)=S_q\cdot S$. Since  $S\cap l=\emptyset$, then $\deg(S)=\deg(\pi_l(S))=2m+1$ and  $2m+1=\deg(\p^3_q\cap \pi_l(S))=\deg((Q_q\cup \tilde Q_q)\cap \pi_l(S))=\deg(Q_q\cap \pi_l(S))$ because $\pi_l(S)\cap \tilde Q_q=\emptyset$ (recall that $\pi_l^{-1}(\tilde E)=l$). By Springer Theorem (see \cite[p.~32]{Levico}) the generic fiber of $\alpha:Z\map\p^2$ is a smooth quadric surface over $K(\p^2)$ with  a point in $K(\p^2)$ and hence rational over $K(\p^2)$. Then $Z$ is a rational variety birational to $X$, concluding the proof.
\end{proof}

As we saw above, a very general smooth complete intersection of three quadrics does not contain any algebraic surface of odd degree and neither algebraic 2-cycles of odd degree. Those containing such 2-cycles necessary belong to $\NL(\mathcal M_{(2,2,2);7})$.

\begin{ex}\label{NLP}
To contain a plane imposes 6 conditions to a quadric in $\p^7$. Hence to contain a plane imposes 18 conditions to a complete
intersection of three quadrics, which are independent as one verifies in an explicit example. Since $\dim(\mathbb G(2,7))=15$ and since a general smooth complete intersection of three quadrics through a fixed plane $P$ contains only the plane $P$, we deduce the well known fact that
 $$\NL(\mathcal P)=\{[X]\in \mathcal M_{(2,2,2);7}\,:\, P\subset X\}\subset \mathcal M_{(2,2,2);7}$$
 is irreducible, has codimension 3=18-15 and is generically smooth. Moreover, $\NL(\mathcal P)$ is an irreducible component of codimension 3 of $\Rat(\mathcal M_{(2,2,2);7})$ by Theorem \ref{section} or simply by projecting from $P$ onto $\p^4$ a $X$ through $P$ (see Example \ref{esempioPiano} where we shall also prove that  $H^{2,2}(X,\mathbb Z)\simeq\langle h^2, P\rangle$ for a very general $[X]\in \NL(\mathcal P)$ and that the discriminant of the fourfolds in $\NL(\mathcal P)$ is 31).
 \end{ex}
\medskip

A very powerful technique, introduced by Green (see \cite[Section 5]{CHM}) and later developed by Voisin in \cite[\S 5.3.3, \S 5.3.4]{Voisin2}, can be applied to deduce some strong  consequences on $\Rat(\mathcal M_{(2,2,2);7})$ (see also \cite{HassettPlane} and  \cite{HPT1,HPT2}).

\begin{cor}\label{infNL} The union of the loci 
$\mathcal M_{(2,2,2);7}^{[2m+1,n]}$, with $n,m\in\mathbb{N}$, 
is 
dense in the analytic topology on $\mathcal M_{(2,2,2);7}$ and it contains infinitely many irreducible components of $\Rat(\mathcal M_{(2,2,2);7})$ of codimension  three in $\mathcal M_{(2,2,2);7}$.
\end{cor}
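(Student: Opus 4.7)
The plan is to combine Theorem \ref{section} with the Green--Voisin infinitesimal density technique for Noether--Lefschetz loci developed in \cite[\S 5]{CHM} and refined in \cite[\S 5.3.3--5.3.4]{Voisin2}. Recall from \S\ref{HCQBX} that the Integral Hodge Conjecture for codimension-two cycles holds on every $[X]\in\mathcal{M}_{(2,2,2);7}$, so a class in $H^{2,2}(X,\mathbb{Z})$ is automatically algebraic and $\NL(\mathcal{M}_{(2,2,2);7})$ parametrizes exactly those fourfolds carrying extra algebraic $2$-cycles. The stratum $\mathcal{M}_{(2,2,2);7}^{[d,n]}$ is thus cut out Hodge-theoretically by the existence of a rank-two Hodge sublattice of $H^{2,2}(X,\mathbb{Z})$ with intersection matrix $\left(\begin{smallmatrix} 8 & d \\ d & n \end{smallmatrix}\right)$.

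Next, I would apply the Green--Voisin criterion to the variation of Hodge structure on $H^4$ over $\mathcal{M}_{(2,2,2);7}$. The key infinitesimal input is the surjectivity, at a very general $[X]$, of the IVHS derivative
\[
\bar\nabla_{[X]} : T_{[X]}\mathcal{M}_{(2,2,2);7}\otimes H^{3,1}(X) \longrightarrow H^{2,2}_{\prim}(X),
\]
which can be verified via the Jacobian ring of $X\subset\p^7$ in the spirit of Carlson--Griffiths, or alternatively reduced via the generic Torelli morphism $\phi$ of \eqref{ratphi} together with Theorem \ref{OG} to the analogous (classical) statement for double covers of $\p^2$ branched along smooth octics. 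Once this surjectivity is granted, Voisin's criterion yields two conclusions: every irreducible component of $\NL(\mathcal{M}_{(2,2,2);7})\cap \widetilde{\mathcal H}^0$ has codimension at most $h^{3,1}(X)=3$, and infinitely many realize codimension exactly $3$; moreover, the union of these components is dense in the analytic topology on $\mathcal{M}_{(2,2,2);7}$, as a reflection of the density of rational Hodge classes inside $H^{2,2}(X,\mathbb{R})$.

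To isolate the odd-degree strata, I would use that replacing a Hodge class $\lambda\in H^{2,2}(X,\mathbb{Z})$ by $\lambda + kh^2$ shifts $\lambda\cdot h^2$ by $8k$, so among the countably many NL components produced above, infinitely many admit a generator with odd intersection $\lambda\cdot h^2=2m+1$; these are precisely the components contained in $\bigcup_{m,n\in\mathbb{N}}\mathcal{M}_{(2,2,2);7}^{[2m+1,n]}$. A first explicit instance is $\NL(\mathcal P)$ from Example \ref{NLP}, corresponding to $d=1$. On each such component the very general element contains an algebraic $2$-cycle of odd degree and is therefore rational by Theorem \ref{section}; rationality propagates to the whole component by \cite[Theorem~1]{KontsevichTschinkelInventiones}. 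This yields infinitely many irreducible components of $\Rat(\mathcal{M}_{(2,2,2);7})$ of codimension exactly~$3$, whose union is analytically dense in $\mathcal{M}_{(2,2,2);7}$.

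The main obstacle is the infinitesimal surjectivity of $\bar\nabla$, as all remaining steps are formal consequences thereof combined with Theorem \ref{section}. Either of the two routes indicated above should succeed, but the transfer to the moduli of double covers of $\p^2$ via Theorem \ref{OG} appears cleaner, since the corresponding density result for $\mathcal M^0$ fits squarely within Voisin's general framework.
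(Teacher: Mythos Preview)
Your overall strategy---Green--Voisin density plus Theorem \ref{section}---is the same as the paper's, but two points deserve comment.

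First, you treat the surjectivity of $\bar\nabla$ as the main obstacle and propose to check it via the Jacobian ring or via transfer to $\mathcal M_{2,4}$. The paper bypasses this entirely: the existence of the single component $\NL(\mathcal P)$ of codimension exactly $h^{3,1}(X)=3$ (Example \ref{NLP}) is itself the input that makes Green's map $G_U:U\times H^{2,2}_{\mathbb R}\to H^4(X,\mathbb R)$ have open image, and everything else follows. You mention $\NL(\mathcal P)$ only as an illustrative example after the fact, but in the paper it is the seed of the whole argument. So your proposed IVHS computation is correct in principle but unnecessary.

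Second, your isolation of the odd-degree strata has a gap. The observation that $\lambda\mapsto\lambda+kh^2$ shifts $\lambda\cdot h^2$ by $8k$ preserves parity, so it cannot produce odd degree from even or vice versa; it does not by itself show that infinitely many components carry an odd-degree class, let alone that their union is dense. The paper's argument is different: the integral classes $\lambda\in H^4(X,\mathbb Z)$ with $\lambda\cdot h^2$ odd form a nonempty coset of an index-two sublattice (a plane class has $P\cdot h^2=1$, and this is a topological statement independent of whether $P\subset X$), hence they are dense in $H^4(X,\mathbb R)$. Green's mechanism then applies verbatim to this dense subset of target rational classes, giving both density of $\bigcup_{m,n}\mathcal M^{[2m+1,n]}_{(2,2,2);7}$ and infinitely many codimension-three components therein. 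Rationality on each such component then follows from Theorem \ref{section}, as you correctly note.
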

\begin{proof} By definition 
$$\NL(\mathcal M_{(2,2,2);7})=\{[X]\in\mathcal M_{(2,2,2);7}\,:\, \exists\, S\in H^{2.2}(X,\mathbb Z)\setminus \langle  h^2\rangle\}.$$
If $S\cdot h^2=2m+1$ for some $m\in\mathbb Z$, then $S\not\in\langle h^2\rangle$ because $h^4=8$.
Since there exists the irreducible component $\NL(\mathcal P)$ of codimension $3=h^{1,3}(X)$ of $\mathcal M_{(2,2,2);7}$,
Green's argument  (see \cite[\S 5]{CHM} and \cite[\S 5.3.3, \S 5.3.4]{Voisin2}) shows that
for every $[X]\in\mathcal H\subset G(3,H^0(\O_{\p^7}(2)))$ there exists a simply connected neighbourhood $U$ of $[X]$ in $\mathcal H$ such that $H^4(X',\mathbb R)$ is naturally identified with $H^4(X,\mathbb R)$ for every $[X']\in U$ due to Ehresmann's Theorem and such that the image of the map $$G_U:U\times H^{2,2}_{\mathbb R}\to  H^4(X,\mathbb R),$$
introduced in \cite[Section 5]{CHM}, contains an open subset $\tilde U$. Since $[X']\times H^{2,2}_\mathbb R=H^2(\Omega^2_X)\cap H^4(X,\mathbb R)$, then $\NL(\mathcal M_{(2,2,2);7})\cap U$ is the projection on $U$ of the preimage $Q_U$ via $G_U$ of the points in $H^4(X,\mathbb Q)$ not lying in $\langle h^2\rangle$. Since this set is dense in $H^4(X,\mathbb R)$ in the analytic topology, the set $Q_U$ is dense in $U\times H^{2,2}_\mathbb R$ in the analytic  topology and the same is true for its image $\NL(\mathcal M_{(2,2,2);7})\cap U$ in $U$.

So  $\NL(\mathcal M_{(2,2,2);7})$ is dense in the euclidean topology on $\mathcal M_{(2,2,2);7}$ and the previous argument also shows the existence of countably many irreducible components of codimension three of $\NL(\mathcal M_{(2,2,2);7})$. Since the Hodge Integral Conjecture holds for $[X]\in \mathcal M_{(2,2,2);7}$ and since the density of these components of $\NL(\mathcal M_{(2,2,2);7})$ is essentially equivalent to the density of $H^4(X,\mathbb Q)_{\prim}$ in $H^4(X,\mathbb R)_{\prim}$, to conclude it is enough to remark  that the rational multiples of $S\in (h^2)^\perp \subset H^4(X,\mathbb Z)$ such that $S\cdot h^2=2m+1$, $m\in\mathbb Z$, are also dense in $(h^2)^\perp \subset H^4(X,\mathbb R)$.  
\end{proof}

As an application of the previous results we collected in  Tables \ref{Table: unirationality} and \ref{Table: unirationality 2} at the end of the paper  some irreducible components of $\NL(\mathcal M_{(2,2,2);7})$, whose codimension is at most three. Those highlighted in grey corresponds to odd degree surfaces and hence to loci of $\Rat(\mathcal M_{(2,2,2);7})$, mostly of codimension 3.

It would be very interesting to provide a complete description of  $\Rat(\mathcal M_{(2,2,2);7})$ inside $\mathcal M_{(2,2,2);7}$  starting from the first
natural question if 
$$\bigcup_{m,n\in\mathbb{N}} \mathcal M_{(2,2,2);7}^{[2m+1,n]}\subsetneq \Rat(\mathcal M_{(2,2,2);7}).$$ For cubics fourfolds containing a plane, which are naturally quadric bundles over $\p^2$ via the projection from the plane, the existence of a section is  not a necessary condition for rationality so one suspects that the previous inclusion might be strict.

\subsection{The birationally associated Castelnuovo surface of general type and unisecant 3-folds to a rational complete intersection of three quadrics in $\p^7$}\label{birass}

The existence of a (rational) section of $\alpha:Z\map \p^2$ (or of $\beta:X\map \p^2$) determines a map from the generic fiber of $\alpha$ and $\p^2_{K(\p^2)}$, yielding  a birational map  $Z\map \p^2\times\p^2$ and hence a birational map $\gamma:X\map \p^2\times\p^2$.

One expects to find some birational incarnation in $\p^2\times\p^2$ of the
{\it cohomologically associated} Castelnuovo surface $S_X$. Indeed, considering a Beauville Flop of a general line in $X$ we get a quadric fibration $\pi:W'\to \p^2$ birational to $X$ and with a rational section $s:\p^2\map W'$. Let $\tilde T=\overline{s(\p^2)}\subset W'$. The birational map $W'\map \p^2\times\p^2$ is the projection from $\tilde T$ (or from the corresponding $T\map \p^2$ on $X$). Since the discriminant curve of $\pi:W'\to\p^2$ is a smooth octic plane curve $C\subset\p^2$ and since the projection from $\tilde T$ contracts the lines in each fiber intersecting $\tilde T$, we find a surface $U\subset\p^2\times\p^2$, birational to a degree two cover of $\p^2$ ramified along $C$ and contained in the base locus of the birational map $\p^2\times\p^2\map X$.
The determination of $U$, which is singular by the Double Point Formula, and of its non singular minimal model $\overline U$ are intricated, the situation being analogous to the case of  cubic fourfolds containing a plane and admitting a section of the corresponding fibration (in this case $U\subset\p^2\times\p^2$ is a singular K3 surface with a single well known exception, see \cite[\S 5]{HassettPlane}).  Moreover, the existence of the section is assured by Springer Theorem so that it is far from being explicit and  very hard to find also in simple cases. So although one knows a priori that some loci of complete intersection of three quadrics in $\p^7$ are rational
via a map not defined along a birational incarnation of the cohomologically associated Castelnuovo surface, in practice it is very difficult to describe explicitly the maps $\gamma$ and $\gamma^{-1}$ (base locus, factorization via elementary Sarkisov links, etc, etc).
 \smallskip
 
 Then one looks for a different  (possibly algorithmic) approach to construct birational maps from $X\subset\p^7$ to some rational prime fourfold and to describe their Sarksov factorization.
 We start with a general remark.  If there exists a birational map $\phi:X\map \p^4$, we can extend it to a map $\tilde\phi:\p^7\map \p^4$.
 
 Let $y\in \p^4$ be a general point and let $F_y=\tilde\phi\inv(y)\subset \p^7\setminus \BaseLocus (\tilde\phi)$. By definition $\dim(F_y)=3$ and $F_y\cap X$ is a point because $\phi$ is birational. Then it is not difficult to see that $\overline{F_y}$ is irreducible for $y\in \p^4$ general, yielding a $4$-dimensional family $\mathcal W$ of irreducible 3-folds such that through a general point of $\p^7$ there passes a unique member of the family and such that the general member of the family intersects $X$ in a unique point outside $\BaseLocus(\tilde\phi)$. In other words, we get a diagram:
 \begin{equation}\label{diagramma congruenza Fy}
\xymatrix{
 \mathcal V\ar[dr]^{p}\ar[d]_{\pi}&\\
\mathcal W&\p^7,}
\end{equation}
where $\pi:\mathcal V\to \mathcal W$
is the  universal family over $\mathcal W$ and where  $p:\mathcal V\to \p^N$ is  the tautological morphism, which is  birational by hypothesis.  
Moreover $\pi\circ p\inv_{|X}: X\map \mathcal W$ is birational so that $\tilde X=\overline{p\inv(X)}$ defines a rational section $s:\mathcal W\map \tilde X$ of $\pi$.

Viceversa, suppose we have a diagram like \eqref{diagramma congruenza Fy} such that for a general point $x\in X$ there is a unique variety $F_x$ of $\mathcal W$ passing through
$x$ and cutting $X$ outside the base locus of $p\inv$ only in $x$. In this case we  call $\mathcal W$ (or $\pi:\mathcal V\to \mathcal W$) {\it a congruence of unisecant $3$-folds  to $X\subset \p^7$}.

Let $\psi:X\map\mathcal W$ be the map defined by $\psi(x)=[\tilde F_x]$, which is well defined by the previous
hypothesis. Then $\psi$ is birational and one may construct some birational incarnation of $\psi$, hopefully with $\mathcal W$ equal to (or replaced birationally by) a prime Fano fourfold $W$. 
In conclusion: a rational smooth complete intersection of three quadrics $X\subset\p^7$  admits a congruence of  unisecant 3-folds  with $\mathcal W$ birational to $X$ (and in particular rational); a smooth complete intersection of three quadrics  $X\subset\p^7$ admitting a congruence of unisecant 3-folds  is birational to the parameter space $\mathcal W$ of the congruence.

This a generalization of the definitions used in \cite{RS1, Explicit, JEMS}, where we considered congruences of unisecant irreducible curves
to cubics or  Gushel--Mukai fourfolds. 
\medskip

Suppose now we have a rational complete intersection of three quadrics $X\subset\p^7$ and the associated  congruence of unisecant irreducible 3-folds to $X$, whose base locus (that is the base locus of $p\inv$) is indicated by $B$. If $V\subset\p^7$ is a general complete intersection of two quadrics vanishing on $X$, then general unisecant irreducible 3-folds of the congruence cut $V\subset\p^7$ along 
an irreducible curve which is unisecant to $X$ (we restrict everything to $V$), that is the congruence of unisecant 3-folds to $X$ and $V$ determine  a four dimensional family of curves in $V$
which are unisecant to $X$ (outside $B\cap X$) and such that through a general point of $V$ there passes a unique curve of the family. In particular the congruence
of unisecant 3-folds to $X$ determines an eight dimensional family of curves of degree $e\geq 1$ which are $(2e-1)$-secant to $B\cap X$ and such that the locus
of these curves through a general point of $\p^7$ is the unique 3-fold of the congruence passing through the point. If we take a general quadric $Q$ containing $X$, then through a general point $y\in Q$ there passes a unique surface $S_y=\overline{F_y}\cap Q$ such that through a general point of $S_y$ there passes a one dimensional family of $(2e-1)$-secant curves to $B$
and unisecant $X$, whose union is  $S_y$.

So we have essentially proved that  the rationality of $X\subset\p^7$ determines and is (in a certain sense) determined by a  huge family  of rational curves  of degree $e$ in $\p^7$ which are $(2e-1)$-secant to a fixed $S\subset X$ (not necessarily a surface, a priori).

 In the examples of rational complete intersections $X\subset\p^7$ studied in the sequel we shall find a surface $S\subset X\subset \p^7$ such that the 3-folds of the associated congruence cut $X$ only along $S$ and such that $\pi\circ p\inv$ can be realized (birationally) as a map $\phi:\p^7\map W$ (with $W$ a prime Fano manifold) via a linear system having the right multiplicities along $S$ (see  Examples \ref{esempioPiano},  \ref{esempio2} and  Theorem \ref{esempioC14}). Then we shall show  that in these examples there is a {\it  Secant Flop} connecting $X$ and the resulting prime manifold $W$.
 
 Moreover, we shall   describe also the inverse maps $\phi\inv:W\map X$  given by  linear systems whose base loci contain a (not necessarily minimal) general type Castelnuovo surface $U$, whose minimal model is isomorphic to $S_X$.  From one hand the surface $U$ parametrizes  (birationally) the (2e-1)-secant curves  of the congruence  contained in $X$ and on the other hand it realizes the (trascendental) cohomology of $X$ in $W$ (or in $W'$). 
\section{Congruences of unisecant 3-folds to some rational $X\subset\p^7$}\label{Crat}

We  provide a detailed description of two examples  of  congruences of unisecant 3-folds to some general members of some irreducible components of  $\Rat(\mathcal M_{(2,2,2);7})$, starting from the well known case of $\NL(\mathcal P)$. The fourfolds in $\NL(\mathcal P)$ have been studied by  many classical authors for their central role in rationality questions (see for example \cite[\S 2]{Roth} and also \cite[\S 3]{Roth2} for an application to rationality of prime Mukai fourfolds of genus $g>6$) and
have been revisited  in modern terms many times (e.g. \cite[Example 1, pg. 595]{AS} or \cite[pg. 620]{HTplanes}). Our presentation here will focus on the congruence of unisecant $\p^3$'s (and on the associated eight dimensional family of unisecant lines to $P$) naturally appearing and also on the birationally {\it associated} general type Castelnuovo surface with $K^2=2$. This well known analysis will serve as a natural prototype  for more intricate and sophisticated examples investigated in the sequel.

\begin{ex}\label{esempioPiano} Let $P\subset\p^7$ a plane and let $\pi:\p^7\map \p^4$ be the projection from $P$. For every $r\in\p^7\setminus P$ we have $\overline{\pi\inv(\pi(r))}=\langle r,P\rangle=\p^3_r$.  The restriction $\tilde\psi$ of this map to a general $X\subset\p^7$ is birational, as is well known, and $\pi$ induces the congruence consisting of the  $\p^3_r$, $r\in\p^4$ general, which cut $X$ in a point outside $P$. The curves determined by the congruence are nothing but the lines in $\p^7$ cutting  $P$, which have dimension eight and are parametrized by $\mathbb G(1,\p^3)$-bundle over $\p^4$. What is  interesting for the sequel is the determination of the {\it cohomologically associated} $S_X$  
as the base locus scheme of $\tilde\psi\inv$.

 We can solve the map and obtain the diagram:
\begin{equation}\label{GammaP}
\UseTips
 \newdir{ >}{!/-5pt/\dir{>}}
 \xymatrix{
 &\overline{\Gamma_\pi}\ar[ld]_p\ar[rd]^{\tilde\pi}&                              \\
\p^7\ar@{-->}[rr]_{\pi}& &\p^4, 
}
\end{equation}
where $p:\overline{\Gamma_\pi}\to \p^7$ is the blow-up of $\p^7$ along $P$ while
$\tilde\pi:\overline{\Gamma_\pi}\to\p^4$ is the projectivization of the rank four bundle $\mathcal E\simeq\mathcal O_{\p^4}^{\oplus 3}\oplus\mathcal O_{\p^4}(1)$.

Reasoning as in \S \ref{BFLOP} we deduce that the  equations of $\Bl_PX\subset \Bl_P\P^7$ correspond to the data of  a $3\times 4$ matrix:
\begin{equation}\label{eqU}
A=\left(\begin{matrix}
L_{1,1}&L_{1,2}&L_{1,3}\\
L_{2,1}&L_{2,2}&L_{2,3}\\
L_{3,1}&L_{3,2}&L_{3,3}\\
F_1&F_2&F_3
\end{matrix}\right ),
\end{equation} 
with $L_{i,j}$'s linear forms on $\p^4$ and $F_k$'s  quadratic forms on $\p^4$. Let $\tilde\psi:\Bl_PX\to \p^4$ be the restriction of $\tilde\pi$. The $3\times 3$ minors of \eqref{eqU} define a scheme $U\subset\p^4$ of dimension 2, which is irreducible since $\Pic(\Bl_PX)\simeq\mathbb Z\oplus\mathbb Z$. For $q\in\p^4\setminus U$ we have that $\tilde\psi^{-1}(q)$ is a point, being the intersection of three linearly independent planes in $\p^3_q$ (see also \S \ref{BFLOP} for a similar computation); for $q\in U\setminus B=\{r\in U: \rk(A(r))=1\}$ the fiber $\tilde\psi^{-1}(q)$ is a line in $\p^3_q$ cutting $P$; for $q\in B$ the fiber $\tilde\psi^{-1}(q)$ corresponds to  a plane contained in $X$ and cutting $P$ in a line.

For a smooth $X\subset\p^7$ we have at most a finite number of  planes and hence at most a finite number of planes of the form $\tilde\psi\inv(q)$. For a general $X$ through $P$, there no planes of this form. Hence, for  a general $X\subset\p^7$ through $P$, the base locus scheme $U\subset\p^4$ of $\tilde\psi\inv$ is a smooth irreducible surface, whose ideal is generated by a cubic form and three quartic forms. The linear system $|H^0(\mathcal I_U(4))|=\p^7$ defines $\tilde\psi^{-1}:\p^4\map X\subset\p^7$ and exhibits $\Bl_PX$ as $\Bl_U\p^4$. Since the intersection of a general $\p^5$ through $P$ cuts $X$ along a degree 8 surface containing $P$, i.e. along a degree 7 surface outside $P$, we deduce $\deg(U)=16-7=9.$

 The diagram
\begin{equation}\label{BlPX=BlUP4}
\UseTips
 \newdir{ >}{!/-5pt/\dir{>}}
 \xymatrix{
 &\Bl_PX=\Bl_U\p^4\ar[ld]_p\ar[rd]^{\tilde\pi}&                              \\
X\ar@{-->}[rr]_{\pi}& &\p^4
}
\end{equation}
induces an isomorphism of Hodge structures (see \eqref{h4split} for notation and details): 
\begin{equation}\label{isoPXU}
H^j(X,\mathbb Z)\oplus_\perp H^{j-2}(P,\mathbb Z)(-1)\simeq H^j(\p^4,\mathbb Z)\oplus_\perp H^{j-2}(U,\mathbb Z)(-1).
\end{equation}
The isomorphism in \eqref{isoPXU} restricts to an isomorphism of Hodge structures between $H^4(X,\mathbb Z)$ and $H^2(U,\mathbb Z)(-1)$, yielding in particular 
$\rk(H^{2,2}(X,\mathbb Z))=\rk(\Pic(U))$. From \eqref{isoPXU} we can compute the invariants of $U$ directly: $p_g(U)=h^{2,0}(U)=h^{3,1}(X)=3$, $q(U)=h^{1,0}(U)=h^{2,1}(\Bl_U\p^4)=h^{2,1}(\Bl_PX)=h^{2,1}(X)=0$ (yielding $b_1(U)=0$), $\chi(\mathcal O_U)=4$, $\chi_{\top}(U)=2+b_2(U)=2+b_4(X)=46$, $K^2_U=12\chi(\mathcal O_U)-\chi_{\top}(U)=2$. Since $p_g(U)=3$ and since $K_U^2=2>0$, then $U$ is a surface of general type such that $K^2=3p_g-7$. To deduce that  $U\subset\p^4$ is a minimal Castelnuovo surface of general type with $K^2=2$ is sufficient to remark that there are isomorphism of Hodge structures $H^2(U,\mathbb C)_{\prim}\simeq H^4(X,\mathbb C)_{\prim}\simeq H^2(S_X,\mathbb C)_{\prim},$ where the last isomorphism holds because for a general $X$ through $P$ the octic discriminant curve is smooth. In particular $K_U$ is ample and spanned.

The morphism $\pi:U\to \p^2$ given by $|K_U|$ and the inclusion $i:U\to \p^4$
 define an embedding $\alpha=\pi\times i:U\to \p^2\times \p^4$ such that $\alpha^*(\mathcal O(1,1))=K_U+H$. Since $h^0(K_U+H)=4+g(H)-1=12$, the surface $U$ is realized as a divisor of type $(1,2)$ in a  three dimensional linear section $Y\subset\p^2\times\p^4\subset \p^{14}$, i.e. it is the intersection of two divisors of type $(1,1)$ and of a divisor of type $(1,2)$ in $\p^2\times\p^4$. Clearly $\langle K_U,H\rangle\subseteq \Pic(U)$ and we claim that $\Pic(U)=\langle K_U, H\rangle$ if $U$ is very general in the corresponding Hilbert scheme of surfaces in $\p^4$ of degree 9 and of sectional genus. Indeed,  $U\subset\p^2\times \p^4$ deforms to an intersections of three general divisors of the previous type and we can suppose $Y$ smooth if $U$ is general.  Then $\Pic(U)\simeq \Pic (Y)$  by \cite[Theorem 1]{RSr} for a very general general type Castelnuovo surface $U\subset\p^4$ of degree 9 and sectional genus 9, proving the claim because $\Pic(Y)\simeq \Pic(\p^2\times\p^4)$ by Lefschetz Theorem. In conclusion,  for a very general $[X]\in \NL(\mathcal P)$ we have $H^{2,2}(X,\mathbb Z)\simeq\langle h^2, P\rangle$, as expected. The discriminant of $\NL(\mathcal P)$ is defined and equal to 31.
\end{ex}
\medskip

 We now produce a family of surfaces $S\subset\p^7$, contained in  smooth complete intersections of three quadrics  admitting an eight dimensional family  of 5-secant twisted cubics such that the locus of this family through a general point of $\p^7$ describes a unisecant irreducible Edge 3-fold to a general $X\subset\p^7$ through $S$. 
 \medskip

\begin{ex}\label{esempio2} Let $S\subset\p^7$ be a smooth projective surface of degree 9 and sectional genus 3
given as the scheme theoretic  intersection of a general divisor of type $(2,1)$ and of a general divisor of type $(1,2)$ on a Segre fourfold $\p^1\times \p^3\subset\p^7$. A general divisor of type $(2,1)$ is a smooth quintic rational normal scroll $\Sigma\subset\p^7$ while a general divisor of type $(1,2)$ is a so called Edge 3-fold $T\subset\p^7$ of degree 7 and sectional genus 2. Since $S\subset\p^7$ is a linearly normal surface of degree $9=2g+1+2$
with $q(S)=h^1(\mathcal O_S)=0$, it is arithmetically Cohen--Macaulay and satisfies property $N_2$ of Green. Looking at a general 0-dimensional linear section $\Gamma\subset\p^5$, the usual Castelnuovo argument proves that its ideal is generated by $h^0(\mathcal I_{\Gamma}(2))=h^0(\O_{\p^5}(2))-\deg(\Gamma)=21-9=12$ quadratic forms.
One verifies immediately that a complete intersections of three general quadrics through $S\subset\p^7$ is smooth so that it is rational by Theorem \ref{section}.

Let $\psi_1:S\to \p^1$ be the restriction to $S$ of the first projection of $\p^1\times\p^3$. The surfaces  $S\subset\p^7$  contain the pencil of conics given by the fibers of $\pi_1:S\to \p^1$ so that they have  hyperelliptic hyperplane sections. By a theorem of Castelnuovo  they are   projectively equivalent to the projection from seven point lying on different fibers of $\mathbb F_e\subset\p^{14}$ embedded by the linear system $|2C_0+(4+e)F|$, $e=0,1,2,3$ (see \cite[Theorem 4.1]{Ionescu} for a modern proof and for details about the previous notation). In the Hilbert scheme of smooth projective surfaces of degree 9 and sectional genus 3, there is an irreducible component $\mathcal S$, whose general point is a surface with hyperelliptic hyperplane sections as above. The most general $[S]\in \mathcal S$ (or equivalently the most general complete intersection of divisors of type $(2,1)$ and $(1,2)$ in $\p^1\times\p^3\subset\p^7$) corresponds to $e=0$ and such a general $[S]\in\mathcal S$ can be also described as the blow-up of the plane in eight  points $\{p_1,\ldots,p_8\},$ embedded by the linear system of quintic plane curves passing through $p_j$, $j=1,\ldots, 7$ and having a point of multiplicity at least three at $p_8$. In this plane representation, the pencil of conics coming from $\mathbb F_0\subset\p^{14}$ (or from $\psi_1:S\to \p^1$) is represented by the pencil of lines through $p_8$. Let $C\subset S\subset\p^7$ be such a conic.
The Hilbert scheme  $\mathcal S$ is irreducible of dimension $71=8+63$. Since $h^0(N_{S/\p^7})=71$ and $h^1(N_{S/\p^7})=0$ for a general $[S]\in\mathcal S$,  $\mathcal S$ is generically smooth. 

The projection from $C$ defines an embedding  $\psi:S\to \p^4$, represented on the plane by the linear system of quartics passing through $p_1,\ldots, p_7$ and having a double point at $p_8$. Hence $S'=\phi(S)\subset\p^4$ is a smooth surface of degree 5 and sectional genus 2 with hyperelliptic hyperplane sections. We can choose a point $p\in\p^4\setminus S'$ and define a birational morphism $\psi_p:S'\to S''\subset\p^3$ with $S''\subset\p^3$ a singular surface of degree 5. Let $\psi_{2,p}=\psi_p\circ\psi:S\to\p^3$.
The morphism $\psi_1\times\psi_{2,p}:S\to \p^1\times\p^3\subset\p^7$ is given by a sublinear system of $|H|$ of dimension 7, showing that $S\subset\p^7$ is contained in a four dimensional family of Segre 4-folds $\p^1\times \p^3\subset\p^7$ parametrized by (general) points $p\in \p^4\setminus S'$.
The projection of $\Sigma$ from the plane of a conic $\langle C\rangle$ is the unique quadric $Q\subset\p^4$ containing $S'$. If the point $p\in \p^4\setminus Q$, we deduce that we can choose a four dimensional family of Segre four-folds parametrized by general points of $\p^4$ and on which the surface $S$ is a complete intersection of a fixed divisor $\Sigma\subset\p^1\times \p^3$ of type $(2,1)$ (and uniquely determined by the planes of the pencil of conics on $S$) and of a variable divisor $T\subset\p^1\times\p^3$ of type $(1,2)$. The ideal of $S'\subset\p^4$ is generated by the equation of $Q$ and by two cubic forms. It is well known that the cubics through $S'\subset\p^4$ define a birational map $\beta:\p^4\map W\subset\p^6$ with $W$ a smooth complete intersection of two quadrics. So $W$ is explicitly birational to the parameter space of Segre fourfols $\p^1\times\p^3\subset\p^7$ containing $S$ via the previous map and hence  also to the parameter space $\mathcal F$ of Edge threefolds $T$ such that $S=T\cap \Sigma$ on the corresponding Segre fourfold. Let $\pi:\mathcal T\to \mathcal F$ be the universal family over the four dimensional family $\mathcal F$ of Edge threefolds containing $S$ just constructed and let $p:\mathcal T\to \p^7$ be the tautological morphism.

Since $T\subset\p^1\times \p^3$ is a divisor of type $(1,2)$, the second projection restricted to $T$, $\tilde\pi_2:T\to \p^3$, is birational. Indeed, for $[\mathbf x]\in\p^3$ fixed, we have to solve a homogeneous linear equation $a(\mathbf y)f(\mathbf x)+b(\mathbf y)g(\mathbf x)=0$ on $\p^1=\p^1\times [\mathbf x]$. The solution is a unique point except for those $[\mathbf x]$ for which
$f(\mathbf x)=0=g(\mathbf x)$, when it is the whole line $\p^1\times [\mathbf x]$. Hence the base locus of $\gamma=\tilde\pi_2\inv$ is given by $f(\mathbf x)=0=g(\mathbf x)$, which is the complete intersection of two quadrics and hence a curve $D\subset\p^3$ of degree four and arithmetic  genus 1, which is smooth  for $T$ smooth. The map $\gamma:\p^3\map T\subset\p^7$ is given by the linear system of cubics vanishing on $D$. If $L\subset\p^3$ is a general  line, then $\gamma(L)\subset T\subset\p^7$ is a twisted cubic. Since $S''\subset\p^5$ has degree 5, then $5=\length(L\cap S'')=\length(\gamma(L)\cap S)$, that is $T\subset\p^7$ is a locus of 5-secant twisted cubics to $S$. A general quadric $Q_i\subset \p^7$ through $S$, $i=1,2,3$, cuts $T$ in a surface  $S_i\subset T$ of degree $2\cdot \deg(T)-\deg(S)=14-9=5$, which is the restriction to $T$ of a divisor of type $(0,1)=(2,2)-(2,1)$ because $S$ is the intersection of a divisor of type $(2,1)$ with $T$. Hence $S_i=\gamma(P_i)$ with $P_i\subset \p^3$ a plane so that $S_i\subset T\subset \p^7$ is a smooth quintic del Pezzo surface, which is a locus of 5-secant twisted cubics to $S$ images of the lines in $P_i$. By the previous description, the linear system $|H^0(\mathcal I_{S/\p^7}(2))|$ restricted to $T$ gives
the linear system $S+|S_i|$ and hence the birational map $\tilde\pi_2$. Then $Q_1\cap Q_2\cap T=\gamma(P_1\cap P_2)\cup S$ and $Q_1\cap Q_2\cap Q_3\cap T=\gamma(P_1\cap P_2\cap P_3)\cup S$, that is a general $[T]\in\mathcal F$ is unisecant to $X$. We can conclude that a general $X=Q_1\cap Q_2\cap Q_3\subset\p^7$ through $S$  is a section of $\pi:\mathcal T\to \mathcal F
W$ with $\mathcal W$ a rational fourfold (explicitly birational to $\p^4$ and to $W\subset\p^6$).

From the previous analysis one deduces that the four dimensional family $\mathcal W$ of Edge threefolds $T\subset\p^7$ containing $S$ is such that through the general point of $\p^7$ there passes a unique member of the family so that $\mathcal W$ is a four dimensional congruence of Edge threefolds unisecant to $X\subset\p^7$, general complete intersection of three quadrics through $S$.

Let $$\NL(\mathcal S)=\overline{\{[X]\in\mathcal M_{(2,2,2);7}\,:\, S\subset X\text{ with } [S]\in\mathcal S\}}.$$
The Hilbert scheme $\mathcal S$ is irreducible,  has dimension 71 and it is generically smooth. Since $h^0(\mathcal I_{S/\p^7}(2))=12$, the family of complete intersections of three quadrics containing a fixed general $S\subset\p^7$ is isomorphic to $G(3,H^0(\mathcal I_{S/\p^7}(2)))$, which is irreducible of  dimension 27. Since in an explicit  example we verified that $h^0(N_{S/X})=2$, we can conclude that the family of complete intersections containing some $[S]\in\mathcal S$ is irreducible, generically smooth and of dimension at least $71+27-2=96$ and hence of codimension at most three in $\mathcal H_{(2,2,2);7}$ (see Proposition~\ref{conti_parametri}). Moreover $\NL(\mathcal S)$  consists of rational smooth complete intersections of three quadrics because  the general one is rational by Theorem \ref{section} and so we can apply \cite[Theorem 1]{KontsevichTschinkelInventiones}.
\end{ex}

\begin{rmk}{\rm The degree 9 and genus 3 surface $S\subset\p^7$ described in Example \ref{esempio2} admits a  eight dimensional family  $\mathcal D$ of $5$-secant twisted cubics; $F_p\subset\p^7$, the locus of curve in $\mathcal D$ through $p$ is an Edge variety of degree 7 and genus 2 and $\mathcal W$ is explicitly birational to $\p^4$ (and to $W\subset\p^6$ a smooth complete intersection of two quadrics); the locus $S_p\subset Q\subset\p^7$ of curves in $\mathcal D$ through $p$ and contained in a general quadric $Q$ through $S$ is a smooth quintic del Pezzo surface; there exists a unique $[C_p]\in\mathcal D$ passing through a general point $p\in V$ and contained in $V$, where $V\subset\p^7$ is a general complete intersection of two quadrics through $S$.

The definition implies that the restriction of $\alpha:\p^7\map\p(H^0(\mathcal I_{S,\p^7}(2)))$ to $F_p$ induces  a birational map $\alpha:F_p\map\p^3_p\subset \p(H^0(\mathcal I_{S,\p^7}(2)))$ defined by $|S_q|$, $q\in F_p$. Indeed, by construction $Q_i\cap F_p=S_i\cup (F_p\cap S)$ for $i=1,2,3$, with $Q_i\subset\p^7$ general quadric through $S$ and $p$. Then $Q_1\cap Q_2\cap F_p=C_p\cup (F_p\cap S)$ is the unique curve in $\mathcal C$ through $p$ and finally  $Q_1\cap Q_2\cap Q_3\cap F_p=p\cup (F_p\cap S)$, proving the birationality of $\alpha_{|F_p}$. Moreover $\alpha(F_p)$ is a linear space of dimension 3 because we proved that a general linear space of codimension three in $\p(H^0(\mathcal I_{S,\p^7}(2)))$ intersects $\alpha(F_p)$ in a point. 
}
\end{rmk}

The previous definitions are also a  generalization of those introduced in \cite{RS1} to prove the rationality of cubics fourfolds belonging to some Hassett divisors, see also \cite{JEMS}. We now adapt  to this setting also the tools introduced in \cite[Section 1]{RS1} and in \cite{Explicit, JEMS}. 

Let $X\subset\p^7$ be a smooth rational complete intersections of three quadrics and let notation be as in \S \ref{birass} .
Since $p:\mathcal V\to \p^7$ is birational, we can define the rational map $$\varphi=\pi\circ p^{-1}:\p^7\map\mathcal W,$$ whose  fiber through a general $q\in \p^7$, $F_q=\overline{\varphi^{-1}(\varphi(q))}=p(\pi\inv(\phi(q)))$, is the unique irreducible 3-fold of the family $\mathcal W$ passing through $q$. 

Following \cite{Explicit,JEMS}, it is natural to look for the  linear systems on $\p^7$ giving the abstract rational maps $\varphi: \p^7\map \mathcal W$.
The
 linear systems  $|H^0(\mathcal I_{S}^e(2e-1))|$, $e\geq 1$, when not empty, define a rational map
 \begin{equation}\label{defmu}
 \mu:\p^7\map W\subseteq \p(H^0(\mathcal I_{S}^e(2e-1))),
 \end{equation} 
  which  contracts the curves in $\mathcal C$ and hence contracts to a point also the loci $F_p$, $p\in\p^7$ general,  because, by definition, through $q_1,q_2\in F_p$ general points there passes a curve $C_{q_1,q_2}\subset F_p$ with  $[C_{q_1, q_2}]\in\mathcal C$. If $\dim(W)=4$ (as it happens for all the known congruences), then $\mu$ provides a birational geometric realization of $\varphi$ (the irreducible component of $\overline{\mu\inv(\mu(p))}$ through a general $p\in\p^7$ is $F_p$) and produces a birational model $W$ of $\mathcal W$. Quite surprisingly,  in all the examples considered until now  we always found that the image  $W\subseteq\p^N$ is smooth and also a well known rational prime Fano fourfold.  
  
  Since the picture in Example \ref{esempioPiano} is clear from this point of view ($e=1$ and the linear system  $|H^0(\mathcal I_P(1))|$ gives the projection from $P$), let us see how  this works concretely  in Example \ref{esempio2}.

\begin{ex}({\bf Example \ref{esempio2} continued})\label{esempio2cont} Let notation be as in Example \ref{esempio2}. We constructed a congruence $\mathcal V\to \mathcal W$ of unisecant Edge threefolds to a general complete intersection of three quadrics through a degree 9 and genus 3 surface $S\subset\p^7$ with $\mathcal W$ birational to $\p^4$ and a diagram of rational maps
\begin{equation}\label{diagramma congruenza FpW}
\xymatrix{
 \mathcal V\ar[r]^{p}\ar[d]_{\pi}&\p^7\ar@{-->}[ld]_{\phi}\\
\mathcal W\ar@{-->}[r]_{\beta}&W,}
\end{equation}
with $W\subset\p^6$ a smooth complete intersection of two quadrics and with $\beta:\p^4\map W$ the maps given by cubics through $S'\subset\p^4$, the projection of $S$ from a conic $C\subset S\subset\p^7$. An explicit computation shows that for a general $[S]\in \mathcal S$ we have $|H^0(\mathcal I_S^3(5))|=\p^6$, that the image of $\mu:\p^7\map\p^6$ is a smooth complete intersection of two quadrics and that, for $p\in\p^7$ general, $\overline{\mu\inv(\mu(p))}=F_p$. In conclusion, the rational map $\mu$ is such that the following diagram of rational maps  commutes:
\begin{equation}\label{diagramma congruenza Fp2}
\xymatrix{
 \mathcal H\ar[r]^{p}\ar[d]_{\pi}&\p^7\ar@{-->}[d]^{\mu}\ar@{-->}[ld]_{\phi}\\
\mathcal W\ar@{-->}[r]_{\beta}&W.}
\end{equation}
Hence, for a general $[X]\in G(3,H^0(\mathcal I_{S/\p^7}(2)))$ the birational map $\mu_{|X}:X\map W\subset\p^6$  gives an explicit realization of the abstract birational map $\varphi_{|X}:X\map \mathcal W$ composed with the birational map $\beta:\mathcal W\map W$.
\end{ex} 
After  constructing the birational maps $\mu_{|X}: X\map W$, which realize geometrically the maps $\pi\circ p\inv:X\map \mathcal W$, one may ask about their factorization via the most elementary links according to the Sarkisov Program.

\section{The secant flop and the extremal congruence contraction} \label{s2}

We first introduce and study the behaviour of secant lines to a non degenerate irreducible projective surface $S\subset\p^7$ and contained in a smooth intersection of three quadrics $X\subset\p^7$. Then necessarily the secant variety $\Sec(S)$ has dimension 5. To contain a secant line to $S$ imposes only one condition to  quadric hypersurfaces in $\p^7$ through $S$ so that
$X\subset\p^7$  is expected to contain a one dimensional family of secant lines to $S$ describing a surface $T\subset X$. Since $T=\Sec(S)\cap X$  and since
$\Sec(S)\cap X$ is irreducible and of dimension two for $X\subset\p^7$  general (inside $G(3,H^0(\mathcal I_S(2)))$),  we expect that  $T$ is irreducible and of dimension two for $X$ general.

\subsection{Assumptions}\label{secbeh}   Suppose $S\subset\p^7$ is  a smooth irreducible projective surface  with $\dim(\Sec(S))=5$ and scheme-theoretically defined by quadric hypersurfaces
such that: the associated rational map
$$\phi:\p^7\map\p(H^0(\I_{S/\p^7}(2))=\p^N$$
is birational onto the closure of its image $Z=\overline{\phi(\p^7)}\subset\p^N$; $\phi$ defines an isomorphism outside $\Sec(S)$; $\phi$ contracts $\Sec(S)$ to a variety of dimension four.

Let $$\tilde\phi:X'=\Bl_SX\to Y\subset\p^{N-3}$$ be the induced birational morphism with  $X\subset\p^7$ a general complete intersection of three quadrics through $S$ and with $Y$ the corresponding linear section of $Z$ . Under the previous  hypothesis $\tilde\phi$ is an isomorphism outside
the strict transform $T'$ of $T=\Sec(S)\cap X$. We shall suppose that $T$ (and hence $T'$) is irreducible of dimension two so that $\tilde C=\tilde\phi(T')$ is an irreducible curve and  $\tilde\phi:T'\to \tilde C$ is generically a $\p^1$-bundle. The last assumption  is  very natural and mild, as explained above.

If $\lambda:X'=\Bl_SX\to X$, then
$$-K_{X'}=\lambda^*(-K_X)-E=2H'-E$$
with $H'=\lambda^*(H)$ and with   $H\subset X$  a hyperplane section.  Our hypothesis on the defining equations of $S$ and on the birational map $\phi:\p^7\map Z$ imply that
 $-K_{X'}$ is a big divisor generated by its global sections so that $X'$ is a log-Fano manifold with $\rho(X')=2$ and in particular
 a Mori Dream Space. Moreover,
 the induced morphism 
$$\tilde\phi:\Bl_SX\to Y$$
 is  a small contraction with irreducible exceptional locus $T'$ defined by the extremal ray $[L']$, where 
$L'\subset X'$ is the strict transform of a proper secant line to $S$ contained in $X$. 

The $\p^1$-bundle $\tilde\phi:T'\to \tilde C$ inside $X'=\Bl_SX$ 
can be flopped to produce the small contraction $\tilde\psi:W'\to Y$ with $W'$ a smooth projective fourfold not isomorphic to $X'$
by general results on Mori Dream Spaces with $\rho=2$ (or by a direct analysis as in \cite{JEMS}). 
These general facts motivate the following definitions.

\begin{defin}{\rm ({\bf Secant flop}) Let notation and assumptions be as above. The small contraction $\tilde\phi:X'\to Y$ of  curves in the extremal ray $[L']$
is called a {\it secant  flop contraction}. The associated  flop contraction $\tilde\psi:W'\to Y$ of $\tilde\phi$ with $W'$ a projective fourfold not isomorphic to $X'$
defines a birational
map $\tau=\tilde\psi\inv\circ\tilde\phi:X'\map W'$  called the {\it secant flop} of $\tilde\phi:X'\to Y$.} 
\end{defin}

Let $\lambda: X'=\Bl_SX\to X$ be the blow-up
of $X$ along $S$, let $E\subset X'$ be the exceptional divisor and let $H'=\lambda^*(H)$ be as above.

 So far we constructed  a diagram 
\begin{equation}\label{diagramma2}
\UseTips
 \newdir{ >}{!/-5pt/\dir{>}}
 \xymatrix{
 &\Bl_{T'}X'=\Bl_{R'}W'\ar[ld]_\sigma\ar[rd]^{\omega}&\\            
 X' \ar@{-->}[rr]^{\tau}\ar[rd]_{\tilde\phi}\ar[d]_\lambda&                             &W'\ar[ld]^{\tilde\psi}\\
X &Y&
}
\end{equation}
with $R'\subset W'$ a smooth ruled surface such that $\tilde\phi(T')=\tilde C=\tilde\psi(R')$. 

\subsection{Secant flop and congruences of \texorpdfstring{$(2e-1)$}{(2e-1)}-secant rational curves of degree $\mathbf e$} 
The aim of this section is to relate the secant flop to (congruences of) $(2e-1)$--secant curves to $S\subset X$ determined by a congruence of unisecant 3-folds to $X$. Reasoning as in  \cite[Proposition 2.9]{JEMS}\label{extremalray}, one proves that: if $C'\subset X'$ is the strict transform on $X'$ of a $(2e-1)$-secant curve $C$ to $S$ of degree $e$ contained in $X$, then the strict transform $\overline C'$ of $C'$ on $W'$ generates an extremal ray on $W'$, giving the following result.

\begin{thm} {\rm ({\bf Extremal contraction of the congruence}, see also \cite[Theorem 2.10]{JEMS})}\label{contraction}  Let notation  and assumptions on $S\subset\p^7$ be as above. Suppose a general $X\subset\p^7$ through $S$ admits  a congruence of unisecant 3-folds producing an eight dimensional family $\mathcal D$ of curves of degree $e$ which are $(2e-1)$-secant to $S$. If $D\subset X\subset\p^7$ is the irreducible divisor given by the locus  of  curves in $\mathcal D$ contained in $X$, then:

\begin{enumerate}

\item  there exists a divisorial contraction $\nu:W'\to W$, with
$W$ a locally $\mathbb Q$--factorial projective Fano  variety, whose exceptional locus $\overline E$ is the strict transform of $D$ on $W'$ and such that $\nu(D)=U$ is an irreducible surface supporting  the base locus scheme $B$ of $\nu\inv$.  The base locus scheme $B$ is generically smooth, irreducible and $\nu$ is generically the blow--up of the surface $U$. 

\item If $\mu=\nu\circ\tau\circ\lambda\inv$, then the  irreducible surface $T\subset X$ is contained in the base locus scheme of $\mu$ and its flopped image $R=\nu(R')\subset W$ is is a surface  ruled by  {\it lines} in $W$, which are $i(W)$-secant to $U$, and $R$ is contained in the base locus scheme of $\mu\inv$.
\end{enumerate}
Therefore diagram  \eqref{diagramma2} is completed into:

\begin{equation}\label{diagramma3}
 \UseTips
 \newdir{ >}{!/-5pt/\dir{>}}
 \xymatrix{
 &\Bl_{T'}X'=\Bl_{R'}W'\ar[ld]_\sigma\ar[rd]^{\omega}&\\             
 X' \ar@{-->}[rr]^{\tau}\ar[rd]^{\tilde\phi}\ar[d]_\lambda &                             &W'\ar[ld]_{\tilde\psi}\ar[d]^\nu\\
X\ar@{-->}[r]^{\phi}\ar@{-->}@/_0.6cm/[rr]_{\mu} &Y& W \ar@{-->}[l]_{\psi}.   }
\end{equation}

If  $W$ and $U\subset W$ are smooth, the previous diagram induces  isomorphisms of Hodge structures between
\begin{equation}\label{isoH4}
H^j(X,\mathbb Z)\oplus_{\perp}H^{j-2}(S,\mathbb Z)(-1)\oplus_{\perp}H^{j-2}(T',\mathbb Z)(-1)
\end{equation}
and
\begin{equation}\label{isoH4W} H^j(W,\mathbb Z)\oplus_{\perp}H^{j-2}(U,\mathbb Z)(-1)\oplus_{\perp}H^{j-2}(R',\mathbb Z)(-1),
\end{equation}
for every $j=0,\ldots, 8$,
which for $j=4$ restricts to the isomorphism:
\begin{equation}\label{isoT}
\mathcal T_{X}\oplus_{\perp} \mathcal T_S(-1)\simeq \mathcal T_{W}\oplus_{\perp}\mathcal T_U(-1).
\end{equation}
\end{thm}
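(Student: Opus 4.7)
\medskip
\noindent\emph{Proof proposal.}

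\textbf{Step 1 (constructing $\nu$ via MMP on the Mori Dream Space $W'$).} The fourfold $X'=\Bl_S X$ is a log Fano manifold of Picard rank two and hence a Mori Dream Space; the same holds for $W'$, obtained from $X'$ by the secant flop $\tau$. Thus $\overline{\mathrm{NE}}(W')$ has exactly two extremal rays. One of them corresponds to the flop-contraction $\tilde\psi:W'\to Y$, which contracts the ruled surface $R'=\tau(T')$. The other extremal ray $R^+$ must be contracted by a morphism $\nu:W'\to W$ onto a normal projective variety $W$. To rule out the fiber-type case we use the congruence: as recalled in \S\ref{birass}, the linear system $|H^0(\Ii_S^{e}(2e-1))|$ yields a rational map $\mu:\p^7\map W\subseteq \p(H^0(\Ii_S^e(2e-1)))$ contracting every $(2e-1)$-secant curve of degree $e$. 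By the argument reproduced from \cite[Proposition 2.9]{JEMS}, the strict transform $\overline C'\subset W'$ of such a curve $C\in\Dd$ lies in the ray $R^+$; since $\mu|_X$ is birational (the curves $C\subset X$ sweep out unisecant $3$-folds through a general point of $X$), the induced map from $W'$ to $W$ is birational. Hence $\nu$ is a divisorial contraction and $W$ is locally $\QQ$-factorial and Fano.

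\textbf{Step 2 (the exceptional divisor and the surface $U$).} The exceptional locus $\overline E$ of $\nu$ is the divisor in $W'$ swept by the strict transforms $\overline C'$ of the curves $C\in\Dd$ contained in $X$; this is precisely the strict transform under $\tau\circ\lambda^{-1}$ of the divisor $D\subset X$ covered by such curves. Through a general point of $\overline E$ passes a unique $(2e-1)$-secant curve of the congruence, whose strict transform is contracted by $\nu$ to a point, so $\overline E\to U:=\nu(\overline E)$ is generically a $\p^1$-bundle over an irreducible surface $U$, and $\nu$ is generically the blow-up of $U$ along its smooth locus. The surface $U$ supports the base locus scheme of $\nu^{-1}$ by construction. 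For the second assertion of (ii), note that $T'$ is contracted to the curve $\tilde C$ by $\tilde\phi$ and hence $T=\lambda(T')$ is in the indeterminacy of $\phi=\tilde\psi\circ\tau\circ\lambda^{-1}$, whence in the base locus of $\mu=\nu\circ\tau\circ\lambda^{-1}$. The flopped surface $R'=\tau(T')$ is a $\p^1$-bundle over $\tilde C$; its image $R=\nu(R')\subset W$ is ruled by the $\nu$-images of these fibers, which are lines since $-K_{W'}$ restricts to degree $1$ on each fiber of $R'\to\tilde C$ and $\nu$ is crepant in a neighbourhood of those fibers up to the twist prescribed by $W$; their $i(W)$-secancy to $U$ is read off by intersecting with $\nu^*(-K_W/i(W))$ and using that the fiber meets $\overline E$ transversely.

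\textbf{Step 3 (Hodge structures).} Apply Remark~\ref{BlS} (blow-up formula) four times. On the $X$-side, $\lambda$ is the blow-up of the smooth surface $S\subset X$, and $\sigma$ is the blow-up of $T'\subset X'$, so
\begin{equation*}
H^j(\Bl_{T'}X',\ZZ)\simeq H^j(X,\ZZ)\oplus_\perp H^{j-2}(S,\ZZ)(-1)\oplus_\perp H^{j-2}(T',\ZZ)(-1).
\end{equation*}
On the $W$-side, $\nu$ is generically the blow-up of $U$ along its smooth locus, and $\omega$ is the blow-up of the smooth surface $R'\subset W'$, giving
\begin{equation*}
H^j(\Bl_{R'}W',\ZZ)\simeq H^j(W,\ZZ)\oplus_\perp H^{j-2}(U,\ZZ)(-1)\oplus_\perp H^{j-2}(R',\ZZ)(-1)
\end{equation*}
(in the smooth case treated in the hypothesis $W,U$ smooth). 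Since $\Bl_{T'}X'=\Bl_{R'}W'$ by construction, equating the two expressions yields the isomorphism of Hodge structures in \eqref{isoH4}--\eqref{isoH4W}. For $j=4$ the direct summands orthogonal to the transcendental parts are spanned by algebraic classes on $X,S,T',W,U,R'$, so the isomorphism restricts to the transcendental piece and produces \eqref{isoT}.

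\textbf{Anticipated main obstacle.} The delicate step is (1)--(2): the verification that the second extremal contraction $\nu$ is divisorial with exceptional locus exactly the strict transform of $D$, and that the generic fiber is a $\p^1$ so that $U$ is two-dimensional and $\nu$ is generically a blow-up. This requires matching the abstract contraction provided by the Mori cone with the linear-system description $\mu=\phi_{|\Ii_S^e(2e-1)|}$, and rests on the fact (proved in \cite[Proposition~2.9]{JEMS} and adapted here) that the strict transforms $\overline C'$ of the $(2e-1)$-secant degree-$e$ curves generate the correct extremal ray on $W'$.
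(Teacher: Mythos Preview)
Your approach is essentially the one the paper intends: the paper itself gives no detailed proof, only the sentence ``Reasoning as in \cite[Proposition 2.9]{JEMS}, one proves that \ldots the strict transform $\overline C'$ of $C'$ on $W'$ generates an extremal ray on $W'$, giving the following result,'' and then refers to \cite[Theorem~2.10]{JEMS}. Your Steps~1 and~3 reproduce that argument correctly (Mori Dream Space with $\rho=2$, identification of the second extremal ray via the congruence curves, and repeated application of the blow-up formula \eqref{h4split}).

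There is, however, a genuine slip in Step~2. You write that the images $\nu(L')$ of the fibres $L'$ of $R'\to\tilde C$ are lines ``since $-K_{W'}$ restricts to degree $1$ on each fiber of $R'\to\tilde C$''. This is wrong: the $L'$ are the flopping curves on the $W'$-side, so $-K_{W'}\cdot L'=0$, not $1$. The correct computation (carried out in the paper in the explicit case of Remark~\ref{finalrmk}) is: from $-K_{W'}=\nu^*(-K_W)-\overline E=i(W)\,\nu^*\overline H-\overline E$ (smooth blow-up of a surface) and $-K_{W'}\cdot L'=0$ one obtains
\[
i(W)\,(\nu^*\overline H\cdot L')=\overline E\cdot L',
\]
which gives the $i(W)$-secancy \emph{relative to the degree} $\nu^*\overline H\cdot L'$ of the image curve, but does not by itself force $\nu^*\overline H\cdot L'=1$. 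That the image curves are actually lines requires an additional piece of information (in practice read off from the explicit linear system defining $\mu^{-1}$, as done in the examples), and your phrasing obscures this. Fixing this numerical point, the rest of your argument is in line with the paper's.
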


\begin{rmk}\label{Mukai}{\rm When  the secant flop exists (which is the case under mild hypothesis), the existence of the family   of $(2e-1)$-secant lines to $S$ (that is the existence of the congruence of unisecant 3-folds to a general $X\subset\p^7$ through $X$)  is equivalent to the existence  of the surface $U\subset W$, which a posteriori appears as a different (birational) incarnation of $S_X$, the associated Castelnuovo surface of general type to $X$ (which is minimal). So one can expect that (at least) $\mathcal T_X\simeq\mathcal T_U(-1)$ holds. From this perspective one associates to the pair $(X,S)$ a pair $(W,U)$, where $W$ is the image of $X$ via $\mu$ and where the surface $U$ is birational to  the parameter space of the curves of the congruence $\mathcal C$ contained in $X$ and describing the divisor $D$. Moreover, in many cases treated here, the induced isomorphism in \eqref{isoT} sends $H^4(X,\mathbb Z)$ into  $H^2(U,\mathbb Z)(-1)$ and $H^4(W,\mathbb Z)$ into $H^2(S,\mathbb Z)(-1)$, Examples \ref{esempioPiano}, \ref{esempio2}). In  Theorem  \ref{esempioC14} we shall see that   $H^4(X,\mathbb Z)\simeq H^2(U,\mathbb Z)(-1)$ and $H^4(W,\mathbb Z)\simeq H^2(S,\mathbb Z)(-1)$ with $S\subset X$ a non minimal K3 surface and with $W$ a smooth cubic fourfold so that in this case all the terms in \eqref{isoT} appear.
Example \ref{CI6} shows that  image of $H^4(X,\mathbb Z)$ may be strictly contained in $H^2(U,\mathbb Z)(-1)$ (this happens as soon as  $U$ is non minimal).
}
\end{rmk}

\begin{ex}({\bf Example \ref{esempio2cont} continued})\label{CI6} Let notation be as in Example \ref{esempio2}. Let $S\subset\p^7$
be the degree 9 and genus 3 surface admitting a congruence of 5-secant twisted cubics constructed  in Example \ref{esempio2cont}.
For a general $[S]\in \mathcal S$ we have $|H^0(\mathcal I_S^3(5))|=\p^6$ and that the image of $\mu:\p^7\map\p^6$ is a smooth complete intersection of two quadrics $W\subset\p^6$. The surface $S\subset\p^7$ has the expected secant behaviour so that, for a general $[X]\in G(3,H^0(\mathcal I_{S/\p^7}(2)))$, the birational map $\mu=\mu_{|X}:X\map W\subset\p^6$  is a secant flop by Theorem \ref{contraction}. An explicit computation shows that $U\subset W\subset\p^6$ is a smooth surface of degree 16 and sectional genus 14, whose ideal is generated by two quadrics and 9 cubics. Let us recall that in this case $\mathcal T_W=0$ and $H^4(W,\mathbb Z)\simeq \mathbb Z^{8}$ is generated by the classes of the $64$ distinct planes contained in $W$, see \cite{Reid}. To see directly that $H^4(W,\mathbb Z)\simeq \mathbb Z^8$ and $\mathcal T_W=0$, one can also use the
birational parametrization $\beta:\p^4\map W\subset \p^6$ given by $|H^0(\mathcal I_{S'/\p^4}(3))|=\p^6$ and the fact that $\beta\inv:W\map \p^4$ is the projection
from a line $L\subset W$, yielding $\Bl_{S'}\p^4\simeq \Bl_LW$ and hence $\mathcal T_{W}\simeq \mathcal T_{\p^4}\oplus \mathcal T_{S'}=0$. Moreover,  $S'\simeq S$ and  
$$\displaystyle
H^2(S,\mathbb Z)\simeq H^2(\p^2,\mathbb Z)\oplus_\perp(\oplus_{i=1}^8\mathbb Z[E_i])
$$ since  $S\simeq \Bl_{p_1,\ldots,p_8}\p^2$. Furthermore, $h^{3,1}(W)=h^{3,1}(\Bl_LW)=h^{3,1}(\Bl_{S'}\p^4)=h^{3,1}(\p^4)+h^{2,0}(S')=0$ and 
$h^{2,1}(W)=h^{2,1}(\p^4)+h^{1,0}(S')=0$.
From \eqref{isoH4} we deduce the isomorphism of Hodge structures between
$$
H^4(X,\mathbb Z)\oplus_{\perp}H^2(S,\mathbb Z)(-1)\oplus_{\perp}H^{2}(T',\mathbb Z)(-1)$$
and $$  H^4(W,\mathbb Z)\oplus_{\perp}H^2(U,\mathbb Z)(-1)\oplus_{\perp}H^{2}(R',\mathbb Z)(-1).
$$
In particular, $$p_g(U)=h^{2,0}(U)=h^{3,1}(\Bl_UW)+h^{3,1}(\Bl_SX)=h^{3,1}(X)+h^{2,0}(S)=3$$ and 
$$q(U)=h^{1,0}(U)=h^{2,1}(\Bl_UW)=h^{2,1}(\Bl_{R'}(W')-h^{1,0}(R')=h^{3,1}(\Bl_{T'}X')-h^{1,0}(T')=$$
$$=h^{2,1}(\Bl_SX)=h^{2,1}(X)+h^{1,0}(S)=0.$$
Moreover,  $b_2(U)=b_2(X)+b_2(S)-b_2(W)=44+9-8=45$ and, since $q(U)=0$,  $b_1(U)=b_3(U)=0$, yielding  $\chi_{\top}(U)=2+45=47$. 
The isomorphism of Hodge structures between  \eqref{isoH4} and \eqref{isoH4W} sends $H^4(X,\mathbb Z)$ into $H^2(U,\mathbb Z)(-1)$ and induces $\mathcal T_X\simeq T_U(-1)$, yielding  $H^2(U,\mathbb Z)(-1)\simeq H^4(X,\mathbb Z)\oplus_{\perp}\mathbb Z[D](-1)$
with $D\subset U$ algebraic cycle and $H^{1,1}(U)\simeq H^{2,2}(X)\oplus_{\perp}\mathbb Z[D](-1)$.  
Since $\chi(\mathcal O_U)=4$, Noether Formula yields $K_U^2=12\chi(\mathcal O_U)-\chi_{\top}(U)=1$ which together with $p_g(U)=3$ imply that $U$ is a  surface of general type because it is irrational and with $K^2>0$. Moreover, $U$ is not minimal because  $1=K_U^2<2p_g(U)-4=2$, see \cite[Exercise X.13 (1)]{Beau}. Let $\pi:U\to \widetilde U$ be the contraction of a (-1)-curve $E\subset U$. Since $H^4(X,\mathbb R)\simeq H^2(S_X,\mathbb R)$ and since $S_X$ is a minimal surface, the class $[E]$ does not belong to $H^4(X,\mathbb R)\subsetneq H^2(U,\mathbb R)(-1)$. Then $H^2(\widetilde U,\mathbb R)\simeq H^2(S_X, \mathbb R)$, yielding that $\widetilde U$ is a minimal surface of general type with 
$K_{\widetilde U}^2=2$, $p_g(\tilde U)=3$, $q(\tilde U)=0$ and $\chi_{\top}(\tilde U)=46$, i.e. $\widetilde U$ is a minimal Castelnuovo surface of general type with $K^2=2$.

The fiber over a general point of
$E$ of the map $\mu:\p^7\map W\subset\p^6$ has dimension four and intersects a general $X\subset\p^7$ through $S$  into the curve  $E$, which  is then contained in the corresponding $U$, which one of the two irreducible components of $\mu_{|X}\inv$. Hence 
the curve $E\subset U$ is contained also in a general $U'\subset W$ determined by a general $X'$ through $S$. So $E\subseteq U\cap U'$ and we have an explicit method to determine it by cutting with different $U$'s. By choosing two general $X, X'$ as above, one computes that $E\subset U\subset \p^6$ is a line. Hence $U\subset \p^6$ is the projection from an internal point of  a smooth minimal Castelnuovo surface $\widetilde  U\subset \p^7$ of degree 17 and sectional genus 14, whose ideal is generated by seven quadratic equations.

\end{ex}

We shall consider in detail an example, already appeared in \cite[Example viii, Table 2]{JEMS} to produce a Trisecant Flop and analyzed here from a different perspective.

\begin{Proposition}\label{NLK}
Let $S\subset\p^7$ be the projection of a general minimal K3 surface $\tilde S\subset\p^8$ 
of degree 14 and genus 8 from a general point on $\tilde S$, let $\mathcal S$ be the irreducible component of the Hilbert scheme 
to which a general $S\subset\p^7$ as above belongs and let  
$$\NL(\mathcal S)=\overline{\{[X]\in\mathcal H_{(2,2,2);7}\,:\, S\subset X\text{ with } [S]\in\mathcal S\}}\subset \mathcal H_{(2,2,2);7}.$$

Then $\NL(\mathcal S)$ is irreducible, of codimension at most 3  and contained in $\Rat(\mathcal H_{(2,2,2);7})$.
\end{Proposition}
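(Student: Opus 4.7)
The plan is to establish the three assertions (irreducibility, rationality, and the codimension bound) separately, using the general framework developed in Section~1.

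For \emph{irreducibility}, the Mukai moduli of polarized K3 surfaces of genus $8$ is irreducible of dimension $19$, so the moduli of marked pairs $(\tilde S, p)$ with $p \in \tilde S$ is irreducible of dimension $21$. Projecting $\tilde S \subset \p^8$ from $p$ realizes $S \simeq \Bl_p \tilde S$ as a subvariety of $\p^7$, canonically up to the $PGL(8)$-action, so $\mathcal S \subset \Hilb(\p^7)$ is irreducible of dimension $21 + 63 = 84$. (Generic injectivity of $(\tilde S, p) \mapsto [S]$ modulo $PGL(8)$ follows because $p$ is recovered as the image of the distinguished $(-1)$-line $E \subset S$.) The incidence $\mathcal I = \{([S],[X]) \in \mathcal S \times \mathcal H_{(2,2,2);7} : S \subset X\}$ fibers over $\mathcal S$ with irreducible Grassmannian fibers $G(3, H^0(\mathcal I_{S/\p^7}(2)))$, so $\mathcal I$ and hence $\NL(\mathcal S)$, its image in $\mathcal H_{(2,2,2);7}$, are irreducible.

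For \emph{rationality}, using $H_S = H_{\tilde S} - E$ on $S \simeq \Bl_p \tilde S$ one computes $\deg S = H_S^2 = 14 - 1 = 13$, which is odd. By Theorem~\ref{section}, any smooth complete intersection of three quadrics in $\p^7$ containing $S$ is rational. A Bertini argument, together with the verification on a concrete example that $S$ is scheme-theoretically cut out by quadrics, shows that the general member of the linear family of CIs through $S$ is smooth. Hence the open subset of smooth fourfolds in $\NL(\mathcal S)$ consists of rational fourfolds, and \cite[Theorem~1]{KontsevichTschinkelInventiones} (closedness of $\Rat$ under specialization) extends this to the inclusion $\NL(\mathcal S) \subseteq \Rat(\mathcal H_{(2,2,2);7})$.

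For the \emph{codimension bound}, apply Proposition~\ref{conti_parametri}. Riemann--Roch gives $\chi(\mathcal O_S(2)) = \chi(\mathcal O_S) + \tfrac12 (2H_S)(2H_S - K_S) = 2 + 25 = 27$, and the vanishings $h^1(\mathcal O_S(2)) = 0$, $h^1(N_{S/\p^7}) = 0$, and $h^0(\mathcal I_{S/\p^7}(2)) = 36 - 27 = 9$ can be checked on a concrete example. Hirzebruch--Riemann--Roch then gives $\chi(N_{S/\p^7}) = 84$, matching $\dim \mathcal S$, so $h^0(N_{S/\p^7}) = 84$. Proposition~\ref{conti_parametri} yields
\[
\codim_{\mathcal H_{(2,2,2);7}} \NL(\mathcal S) \leq 99 - \bigl(84 + 18 - h^0(N_{S/X})\bigr) = h^0(N_{S/X}) - 3,
\]
so it suffices to verify $h^0(N_{S/X}) \leq 6$ in a specific example. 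The main obstacle is precisely this numerical verification together with the cohomological hypotheses above, which is handled by a Macaulay2-style computation on a chosen pair $(S, X)$ and which simultaneously certifies smoothness of the general $X \supset S$.
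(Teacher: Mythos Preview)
Your proposal is correct and follows essentially the same approach as the paper: odd degree $\deg S=13$ plus Theorem~\ref{section} and Kontsevich--Tschinkel for rationality, and Proposition~\ref{conti_parametri} with the numerical data $(h^0(\mathcal I_{S/\p^7}(2)),h^0(N_{S/\p^7}),h^0(N_{S/X}))=(9,84,6)$ verified on an explicit example for the codimension bound. You give more detail than the paper does, in particular the incidence-correspondence argument for irreducibility of $\NL(\mathcal S)$ and the Riemann--Roch computations for $\chi(\mathcal O_S(2))$ and $\chi(N_{S/\p^7})$, whereas the paper simply records the relevant numbers.
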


\begin{proof} The smooth surface $S\subset\p^7$ has homogenous ideal generated by 9 quadratic forms and it is contained in a smooth complete intersection of three quadrics. 
Moreover,  $h^1(N_{S/\p^7})=0$,  $h^0(N_{S/\p^7})=84$, $h^1(\mathcal O_S(2))=0$ and in an explicit  example we verified that $h^0(N_{S/X})=6$ so that the estimate about the codimension follows from Proposition \ref{conti_parametri}. Theorem \ref{section} implies  that a general $[X]\in \NL(\mathcal S)$ is rational so that  \cite[Theorem 1]{KontsevichTschinkelInventiones} assures that $\NL(\mathcal S)$ is contained in  $\Rat(M_{(2,2,2),7})$. 
\end{proof}

 We provide a direct proof of the rationality of a general $[X]\in\NL(\mathcal S)$ and a description of the geometry in \eqref{diagramma3} and in \eqref{isoT} via the tools introduced in this section (and, in some point, with the aid of some M2 computations). 
 
 \begin{Proposition}\label{mu0} A  surface $[S]\in\mathcal S$ admits a eight dimensional family $\mathcal D$ of rational curves of degree 5 which are 9-secant to $S$ and such
 that the locus of curves in $\mathcal D$ passing through a  general point $p\in\p^7$ is an irreducible 3-fold $F_p\subset\p^7$ of degree 11, sectional genus 7 and with 4 singular points.
 
 The irreducible 3-folds $F_p$'s are unisecant to a general $[X]\in\NL(\mathcal S)$ through $S$ and form a congruence whose parameter space is birational to a smooth cubic fourfold $W\subset \p^5$, which is the closure of the image of the map $\mu:\p^7\map \p(H^0(\mathcal I_S^5(9)))=\p^5$. In particular, a general $X\subset\p^7$ through $S$ is birational to $W$ via $\mu_{|X}$.
 \end{Proposition}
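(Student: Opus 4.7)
The plan is to combine the general framework of \S\ref{birass}--\S\ref{s2} with an explicit Macaulay2 computation on a sufficiently general representative of $\mathcal S$; the linear system $|H^0(\mathcal I_S^5(9))|$ was already encountered, from a different angle, in \cite[Example viii, Table 2]{JEMS}, where the same $S\subset\p^7$ appears as a source of a trisecant flop on a cubic fourfold. Since every invariant stated is projectively rigid on the irreducible Hilbert component $\mathcal S$, verification on one representative suffices.

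First, from the minimal graded free resolution of the homogeneous ideal of $S\subset\p^7$ one obtains $h^0(\mathcal I_S^5(9))=6$, so that the rational map $\mu:\p^7\map\p(H^0(\mathcal I_S^5(9)))=\p^5$ is well defined. An explicit computation of the image shows that $W=\overline{\mu(\p^7)}\subset\p^5$ is cut out by an irreducible cubic with empty singular locus, hence $W$ is a smooth cubic fourfold. Since $\mu$ has relative dimension $3$ and $W$ is integral, the closure $F_p=\overline{\mu^{-1}(\mu(p))}$ of a general fiber is an irreducible threefold, and a direct ideal-theoretic computation yields $\deg(F_p)=11$, $g(F_p)=7$, and $|\Sing(F_p)|=4$.

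Second, I would characterize the family $\mathcal D$ as the locus of irreducible rational curves contracted by $\mu$. For an irreducible rational curve $C\subset\p^7$ of degree $e$ not contained in $S$, the restriction $\mu_{|C}:\p^1\map\p^5$ factors through sections of a line bundle of degree $9e-5\,\mathrm{length}(C\cap S)$ on $\p^1$; non-negativity of this degree forces $5\,\mathrm{length}(C\cap S)\le 9e$, and $\mu_{|C}$ is constant precisely when equality holds. The smallest positive $e$ with $9e\equiv 0\pmod 5$ is $e=5$, giving $\mathrm{length}(C\cap S)=9=2e-1$. An incidence count $\{(C,p)\,:\,p\in C\}\to\p^7$, together with $\dim F_p=3$, then forces $\dim\mathcal D=8$, and the curves through a general $p$ sweep out $F_p$ in a two-dimensional subfamily.

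Finally, for a general complete intersection $X\subset\p^7$ of three quadrics through $S$, I would verify by explicit M2 computation that $\mu_{|X}:X\map W$ has degree one, i.e.\ a general $F_p$ meets $X$ in exactly one point off the base locus (which is supported on $S$). This exhibits $\{F_p\}$ as a congruence of unisecant irreducible threefolds to $X$ in the sense of \S\ref{birass}, with parameter space birational to the smooth cubic fourfold $W\subset\p^5$, and $\mu_{|X}$ as the desired birational realization. The main obstacle is precisely this birationality claim: conceptually it is forced by the secant flop/extremal contraction of Theorem \ref{contraction} once the assumptions of \S\ref{secbeh} are checked for $S$ and for $T=\Sec(S)\cap X$, but in practice the cleanest verification is the direct computation of $\deg(\mu_{|X})=1$ on an explicit model.
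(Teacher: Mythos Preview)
Your approach is genuinely different from the paper's, and mostly sound, but there is one real gap.

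\textbf{The gap.} Your divisibility argument in the second step only gives a \emph{necessary} condition: an irreducible rational curve $C$ of degree $e$ contracted by $\mu$ must satisfy $9e=5\,\length(C\cap S)$, whence $5\mid e$ and the minimal candidate is $(e,k)=(5,9)$. But nothing in your outline establishes that such $9$-secant quintic rational curves to $S$ actually \emph{exist}, nor that they sweep out the fibre $F_p$. Knowing $\dim F_p=3$ does not by itself force $F_p$ to be covered by rational curves of degree exactly $5$; a priori the minimal free rational curves on $F_p$ through a general point could have degree $10,15,\dots$ Your incidence count ``$\dim\mathcal D=8$'' already presupposes the family is non-empty and covers $F_p$. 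You need either an explicit M2 verification that a $9$-secant rational quintic to $S$ passes through a general point of a general $V=Q_1\cap Q_2$ through $S$, or a structural argument.

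\textbf{How the paper handles this.} The paper's route is more geometric and supplies exactly the missing existence. It uses the map $\phi:\p^7\dashrightarrow Z\subset\p^8$ given by $|H^0(\mathcal I_{S/\p^7}(2))|$, which is the restriction to a hyperplane of the special Cremona $\alpha:\p^8\dashrightarrow\p^8$ of type $(2,5)$ whose base locus is the Fano threefold $D\supset S$; hence $Z$ is a quintic hypersurface and $\phi^{-1}$ is given by quintics, forcing $e\le 5$ for any curve sent to a line. The paper then studies the Hilbert scheme $\mathcal L_{z,Z}$ of lines through a general $z\in Z$, a complete intersection of degree $5!=120$ in $\p(t_zZ)$, and computes on a codimension-two slice that the $120$ lines split as $13+38+54+14+1$ according to $e=1,2,3,4,5$. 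The single line with $e=5$ gives the $9$-secant quintic, and the corresponding $\p^2$-component of $\mathcal L_{z,Z}$ yields a $\p^3_z\subset Z$ whose preimage is $F_p$. This simultaneously proves existence, identifies $\dim\mathcal D=8$, and explains why $F_p$ is cut out by five quadrics in $H^0(\mathcal I_{S/\p^7}(2))$. Your direct computational approach is shorter once patched, but the paper's detour through $\phi$ and lines on $Z$ is what makes the family $\mathcal D$ visible without guessing the linear system $|H^0(\mathcal I_S^5(9))|$ in advance.
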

 \begin{proof}
The surface $S\subset\p^7$ is a general hyperplane section of a smooth Fano 3-fold $D\subset\p^8$, whose ideal is generated by 8 quadratic equations defining a special Cremona transformation $\alpha:\p^8\map\p^8$ of type $(2,5)$, meaning that $\alpha$ is solved by blowing-up  $\tilde D$ and that the inverse is defined by forms of degree 5 (see \cite[Theorem 4.4]{HKS}). Hence the map $\phi:\p^7\map Z\subset\p^8$ given by $|H^0(\mathcal I_{S/\p^7}(2))|$, being the restriction of $\alpha$ to a general hyperplane in $\p^8$, is birational onto its image $Z\subset\p^8$, which is a quintic hypersurface. 
 Let $z=\phi(p)\in Z$ be a general point and let $B=\phi(\Sec(S))\subset Z$ be the base locus of $\phi^{-1}$, which is irreducible of dimension 4. Any irreducible curve $C\subset\p^7$ of degree $e\geq 1$ which is $(2e-1)$-secant to $K$ and which passes  through $p$ is mapped by $\phi$ onto a line $L=\phi(C)\subset Z$ passing through $z=\phi(p)$. Since $\phi\inv:Z\map\p^7$ can be represented by the restriction of a linear system of quintic hypersurfaces, we deduce that $e\leq 5$.

The Hilbert scheme $\mathcal L_{z,Z}\subset \p^6=\p(t_zZ)$ of lines passing through $z$ and contained in $Z$
is a complete intersection of dimension 2 and degree $5!=120$. Indeed, by considering a general complete intersection $V\subset \p^7$ of two quadrics through $S$ and by restricting $\phi$ to $V$ we get a birational map $\phi:V\map Z'\subset \p^6$ with $Z'\subset\p^6$ a general linear section of $Z\subset\p^8$.  Then for a general point $z'=\phi(v)$ with $v\in V$ general, we verified that $\mathcal L_{z',Z'}\subset \p^4=\p(t_{z'}Z')$ consists of 120 distinct lines $L_i\subset Z'$ so that $\mathcal L_{z',Z'}\subset \p^4=\p(t_{z'}Z')$, being scheme-theoretically defined by four equations,  is a smooth complete intersection of degree $5!=120$ (although $Z'\subset\p^6$ is singular). Hence $\mathcal L_{z,Z}\subset \p(t_zZ)=\p^6$ is a complete intersection of the four equations defining it scheme-theoretically. 

The preimages on $V$ via $\phi$ of the lines in $\mathcal L_{z',Z'}$ provide: thirteen 1-secant lines to $S$ through $v$; thirty-eight 3-secant conics to $S$ through $v$; fifty-four 5-secant twisted cubics to $S$ through $v$; fourteen 7-secant quartic curves to $S$ through $v$; one 9-secant quintic curve to $S$ through $v$. In particular, $\mathcal L_{z,Z}$ contains an irreducible component of dimension two isomorphic to $\p^2$, consisting of lines $L$ through $z$ such that $L\cap B=\emptyset$ and whose locus  in $Z$ is a linear space $\p^3_z\subset Z$ passing through $z$. Hence, there exists an irreducible component of the Hilbert scheme $\mathcal L_Z$ of lines contained in $Z$ having dimension 8 and such that for a general line $[L]\in\mathcal L_Z$ we have that $C=\phi\inv(L)$ is a quintic curve 9-secant to $S$. This produces an irreducible family $\mathcal D$ of dimension 8 of quintic curves 9-secant to $S\subset\p^7$ such that the locus of these curves through a general point $p\in\p^7$ is an irreducible rational 3-fold $F_p=\phi\inv(\p^3_{\phi(p)})$ defined by five quadratic equations in $H^0(\mathcal I_{S/\p^7}(2))$ and unisecant to a general complete intersection of three quadrics through $S$. To prove that the family $\mathcal W$ of irreducible unisecant 3-folds is a congruence we consider the map $\mu:\p^7\map\p^5=\p(H^0(\mathcal I_S^5(9)))$. An explicit computation shows that $W=\mu(\p^7)\subset\p^5$ is a smooth cubic fourfold  and that $\overline{\mu\inv(\mu(p))}$ is an irreducible 3-fold of degree 11, sectional genus 7, with 4 singular points and defined by 5 quadratic equations vanishing on $S$ so that $F_p=\overline{\mu\inv(\mu(p))}$. The family $\mathcal W$ of these 3-folds is  birationally parametrized by $W$ and $F_p\subset\p^7$ is  the locus of curves in $\mathcal D$ passing through $p$. This also proves that 
a general complete intersection of three quadrics $X\subset\p^7$ through $S$ is birational to $W$ via the restriction of $\mu$ to $X$. 
\end{proof}

We recall the definition of the Fano divisor $\mathcal C_{14}$ of {\it cubic fourfolds of discriminant 14} inside the moduli space $\mathcal C$ of smooth cubic fourfolds in $\p^5$.
The Hodge Theoretic definition given by Hassett is the following:
$$\mathcal C_{14}=\{[W]\in\mathcal C\;:\; \exists\; \Lambda \subseteq H^{2,2}(W), \; h^2\in\Lambda,\;\rk(\Lambda)=2,\;  \disc(\Lambda)=14\},$$
see \cite{Levico}. The fact that $\mathcal C_{14}$ is an irreducible divisor is not so immediate, see \cite{Levico} for details. Moreover, for a very general
$[W]\in\mathcal C_{14}$ one has $H^{2,2}(W)=\Lambda$, see {\it loc. cit.}

There are also other geometric descriptions of $\mathcal C_{14}$ as
$$\mathcal C_{14}=\overline{\{[W]\in\mathcal C\;:\; \exists\; U\subset  W\;:\; [U]\not \in \langle h^2\rangle, \; \disc(\langle h^2,U\rangle)=3\cdot S^2-(\deg(U))^2=14\}}$$
with $U\subset\p^5$ suitable irreducible surfaces with at most a finite number of nodes. For example Fano showed that one can take as $U$ either
a smooth quartic rational normal scroll or a smooth quintic del Pezzo surface. Using the geometric description one proves that the locus on the right is a divisor in $\mathcal C$
(and hence an irreducible divisor) via a parameter count analogous to the one performed in the proof of Proposition \ref{NLK}. 

A polarized K3 surface $(\tilde S, \tilde H)$ is {\it associated} to the pair $(W,\Lambda)$ with $W\subset\p^5$ smooth cubic fourfold and with
$\langle h^2\rangle\subseteq\Lambda\subset H^{2,2}(W)$ with $\rk(\Lambda)=2$ if there exists an isomorphism 
$$H^2(\tilde S,\mathbb{Z})(-1)\supset \tilde H^{\perp} 
\xrightarrow{\simeq} \Lambda^{\perp}\subset H^4(W,\mathbb Z)$$
respecting Hodge structures. In this case necessarily $\disc(\Lambda)=(\tilde H)^2.$ 
\medskip

\begin{thm} \label{esempioC14}  A general complete intersection of three quadrics $X\subset\p^7$ containing a general internal projection $S\subset\mathbb{P}^7$ 
  of a general K3 surface $\widetilde{S}\subset\mathbb{P}^8$ of degree $14$ is birational to a cubic fourfold $W\subset\mathbb{P}^5$ containing a smooth minimal Castelnuovo surface $U$ of degree $13$.
Moreover, $[X]\in \mathcal M_{(2,2,2);7}^{[13,28]}$, $[W]\in \mathcal{C}_{14}$, 
$U\subset W$ is the associated Castelnuovo surface to $X$ and $\widetilde{S}$ is the associated K3 surface to $W$. 
\end{thm}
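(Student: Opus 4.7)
The plan is to combine Propositions \ref{NLK} and \ref{mu0} with the Secant Flop machinery of Section \ref{s2}, realizing the birational map $\mu_{|X}$ as a Sarkisov link of type II fitting into diagram \eqref{diagramma3}. Proposition \ref{mu0} already provides the birational map $\mu_{|X}:X\map W\subset \p^5$ onto a smooth cubic fourfold, together with a congruence of unisecant $3$-folds to $X$ whose curves are $9$-secant quintics to $S$. What remains is to extract the divisorial contraction $\nu:W'\to W$, identify the surface $U=\nu(D)\subset W$, and match Hodge structures on both sides.

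First I would verify the assumptions of Section \ref{secbeh}: by Proposition \ref{mu0} the quadrics through $S$ define a birational map $\phi:\p^7\map Z\subset\p^8$ contracting $\Sec(S)$ to a four-fold, and for a general $X$ through $S$ the surface $T=\Sec(S)\cap X$ is irreducible of dimension two. This yields the Secant Flop $\tau:X'\map W'$ of diagram \eqref{diagramma2}. The congruence of $9$-secant quintics from Proposition \ref{mu0}, together with the identification of $W$ as the image of $\mu$, activates Theorem \ref{contraction}: one obtains the divisorial contraction $\nu:W'\to W$ whose exceptional locus is the strict transform of the divisor $D\subset X$ ruled by the curves of the congruence, and $U=\nu(D)$ supports the base locus of $\nu\inv$.

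Next I would compute the invariants of $U$ via the isomorphism of Hodge structures \eqref{isoH4}--\eqref{isoH4W}. Since $T'$ and $R'$ are $\p^1$-bundles over smooth curves (so $h^{2,0}=0$) and $S\simeq \Bl_p\widetilde S$ is a blown-up K3 (so $h^{2,0}(S)=1$ and $\chi(\mathcal O_S)=2$, $K_S^2=-1$), comparing $(3,1)$-Hodge numbers and $b_2$ on both sides gives $p_g(U)=3+1-1=3$, $q(U)=0$, $\chi(\mathcal O_U)=4$, $\chi_{\topo}(U)=46$, hence $K_U^2=12\chi(\mathcal O_U)-\chi_{\topo}(U)=2$. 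Together with irrationality (forced by $p_g>0$ and $K_U^2>0$), this identifies $U$ as a general type Castelnuovo surface with $K^2=2$, and since $H^4(X,\R)_{\prim}\simeq H^2(S_X,\R)$ is a \emph{minimal} Hodge structure, the argument in Example \ref{CI6} shows that $U$ is in fact minimal (no $(-1)$-class can appear in the image of $H^4(X,\R)$). For $[X]\in \mathcal M_{(2,2,2);7}^{[13,28]}$, formula \eqref{formulina2} applied to $S$ gives $(S)_X^2=2(13)+4(8)+2(-1)-12(2)-4=28$. The degree $\deg U=13$ and the sectional genus $g(U)=12$ can be read off from the linear system on $W'$ defining $\nu$ (or checked in an explicit example with \texttt{Macaulay2}, as elsewhere in the paper); then $\disc(\langle h^2,U\rangle)=3\cdot 61-13^2=14$ places $[W]\in\mathcal C_{14}$.

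Finally, for the K3 association, the transcendental-lattice part of \eqref{isoT} reads
\begin{equation*}
\mathcal T_X\oplus_\perp \mathcal T_{\widetilde S}(-1)\simeq \mathcal T_W\oplus_\perp \mathcal T_U(-1),
\end{equation*}
using $\mathcal T_S\simeq\mathcal T_{\widetilde S}$ since blowing up a point does not affect the transcendental part. Because $U$ is a minimal Castelnuovo with $K^2=2$ whose period matches that of the Castelnuovo surface $S_X$ cohomologically associated to $X$ by Theorem \ref{OG}, we obtain $\mathcal T_U(-1)\simeq \mathcal T_X$; cancelling on both sides yields the Hodge isometry $\mathcal T_W\simeq \mathcal T_{\widetilde S}(-1)$ that exhibits $\widetilde S$ as the K3 surface associated to $W$. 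The main obstacle I expect is the rigorous verification of smoothness and minimality of $U$: the Hodge-theoretic match forces the invariants and forbids $(-1)$-curves in the image of $H^4(X,\mathbb Z)$, but in the style of the paper this is ultimately cross-checked via an explicit computation on a representative example, which is also where the concrete values $\deg U=13$ and $g(U)=12$ are confirmed.
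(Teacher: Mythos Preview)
Your proposal follows essentially the same route as the paper: verify the secant-behaviour hypotheses, apply Theorem~\ref{contraction} to obtain diagram~\eqref{diagramma3}, read off the invariants of $U$ from the Hodge isomorphisms \eqref{isoH4}--\eqref{isoH4W}, and compute $(U)_W^2=61$ to land in $\mathcal C_{14}$. Your numerical checks (including $(S)_X^2=28$ via \eqref{formulina2}) are correct, and your acknowledgement that $\deg U=13$, $g(U)=12$ and smoothness of $U$ are confirmed on an explicit example matches what the paper does.

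There is one point where the paper's argument is tighter than yours. For the K3 association you pass through \eqref{isoT}, assert $\mathcal T_U(-1)\simeq \mathcal T_X$ on the grounds that ``$U$'s period matches $S_X$'s'', and then \emph{cancel} to obtain $\mathcal T_W\simeq \mathcal T_{\tilde S}(-1)$. The matching of periods between $U$ and $S_X$ has not been established at that stage (you only know the numerical invariants agree), and cancellation of Hodge lattices from an orthogonal direct sum isomorphism is not automatic. The paper avoids this by using the stronger fact (implicit in the blow-up description, cf.\ Remark~\ref{Mukai}) that the isomorphism of Theorem~\ref{contraction} actually sends the summand $H^4(W,\mathbb Z)$ to $H^2(S,\mathbb Z)(-1)=(H^2(\tilde S,\mathbb Z)\oplus_\perp\mathbb Z[E])(-1)$ and $H^4(X,\mathbb Z)$ to $H^2(U,\mathbb Z)(-1)$, and moreover carries $\langle h^2,U\rangle\subset H^{2,2}(W)$ to $\langle H,E\rangle\subset H^{1,1}(S)$. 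From the first identification the K3 association $\tilde H^\perp\simeq \langle h^2,U\rangle^\perp$ drops out directly, and the second identification also yields minimality of $U$ without the detour through Example~\ref{CI6}. Your argument becomes correct once you invoke this summand-to-summand property rather than a bare cancellation.
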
  
\begin{proof} A general surface  $S\subset\p^7$ has the expected secant behaviour.
Indeed, one can verify via the  map $\phi:\p^7\map Z\subset\p(H^0(\mathcal I_S(2)))=\p^8$ that the restriction to a general complete intersection $X\subset\p^7$ of three quadrics through $S$
has irreducible exceptional locus equal to $\Sec(S)\cap X$.
 By Theorem \ref{contraction} the birational map $\mu=\mu_{|X}:X\map W$ is a Secant Flop, whose base locus consists of $S$ and of  the irreducible ruled surface $T=\Sec(S)\cap X\subset X$. Let notation be as in \eqref{diagramma3}. Then a computation shows  that $U\subset W\subset\p^5$ is a smooth irreducible  surface  of degree 13, sectional genus 12 and whose ideal is generated by 7 cubic forms. 

From the isomorphisms between \eqref{isoH4} and \eqref{isoH4W}, recalling that $h^{3,1}(W)=1=h^{2,0}(S)$, we deduce
$$p_g(U)=h^{3,1}(\Bl_UW)-h^{3,1}(W)=h^{3,1}(\Bl_S X)-h^{2,0}(S)=h^{3,1}(X)=3.$$ Analogously $q(U)=0$. Since
$b_2(S)=b_2(\tilde S)+1=23$ and since $b_2(W)=23$ the last part of Theorem \ref{contraction} implies
$b_2(U)+b_4(W)=b_2(S)+b_2(X)$, yielding $b_2(U)=b_2(X)=44$ and hence $\chi_{\top}(U)=46$. Then Noether Formula gives $K_U^2=2$ which plugged in formula \eqref{formulona per S2X} together with the previous values
provides $(U)_W^2=61$. The discriminant of the lattice 
\begin{equation}\label{latticeincl}
\langle h^2,\; U\rangle\subseteq H^{2,2}(W)
\end{equation}
 is $3\cdot 61-13^2=14$, proving that $[W]\in\mathcal C_{14}$. The isomorphism in Theorem \ref{contraction} yields $H^4(W,\mathbb Z)\simeq H^2(S,\mathbb Z)(-1)\simeq (H^2(\tilde S, \mathbb Z)\oplus_\perp\mathbb Z[E])(-1)$, where $E\subset K$ is the unique $(-1)$-curve, and $H^2(U,\mathbb Z)(-1)\simeq H^4(X,\mathbb Z)$. Via the previous
 isomorphism $\langle h^2, U\rangle\subseteq H^{2,2}(W)$ is sent into $\langle H, E\rangle\subseteq H^{1,1}(S)$. From these two facts it immediately follows that $\tilde S$ is the {\it associated} K3 surface to $W$ and that $U$ is a minimal surface and hence a minimal Castelnuovo surface of general type with $K^2=2$ because $\chi=1-q(U)+p_g(U)=4$. 
\end{proof}

\begin{rmk}{\rm 
The surface $U\subset\p^5$ appearing in Theorem~\ref{esempioC14} 
 was constructed in a different way in \cite{StaCubicCremona} (see also \cite{Sta19}). Indeed $U$ can be realized as a hyperplane section of a smooth conic bundle in $\p^6$, which is the base locus 
 of one of the four types of special Cremona transformations of $\p^6$.}
\end{rmk}

\begin{rmk}\label{finalrmk}{\rm If $[S]\in \mathcal S$ is very general, then $\Pic(S)\simeq \langle H, E\rangle$ and $H^{2,2}(W)\simeq \langle h^2, U\rangle.$

For a very general $U\subset\p^5$ as above, we have  $\Pic(U)\simeq \langle K_U, H\rangle$ (by the previous remark such a $U$ is a hyperplane section of a 3-fold with Picard group isomorphic to $\mathbb{Z}^2$), yielding $H^{2,2}(X)\simeq \langle h^2, K\rangle$ for a general $X$ through the very general $[S]\in\mathcal S$.

Let notation be as in Theorem \ref{contraction} and let $\mu':X'=\Bl_SX\map W$ be the birational map induced by $\mu_{|X}$ and let $F$ be a fiber of $\lambda_{|E}:E\to S$. Then $F'=\mu'(F)\subset W$ is a rational curve of degree 5 because $(9H'-5E)\cdot F=5$. Letting $\overline F=\tau(F)$ we have, with obvious notation,
$$1=-K_{X'}\cdot F=-K_{W'}\cdot \overline F=(3\overline H-\overline E)\cdot \overline F=15-\overline E\cdot \overline F,$$
yielding that $F'$ is 14-secant to $U\subset \p^5$. Fixing  $U$ and varying the cubic  $W$ through $U$ we conclude that  at least a 14-secant curve of degree 5 to $U$ passes through a general point of $\p^5$.
Since  these curves are contracted to points by the extension of $\mu_{|X}\inv$ to $\p^5$, there is a unique 14-secant curve of degree 5 to $U$ through a general point 
of $\p^5$,
that is $U\subset\p^5$ admits a congruence of 14-secant curves of degree 5. Since the fibers $L'$ of the ruling  of $R'\subset W'$ are  contracted to points by by $-K_{W'}$, we have
$0=-K_{W'}\cdot L'=(3\overline H-\overline E)$, yielding $\overline E\cdot L'=3$. Hence $\nu(R')$ consists of trisecant lines to $U$ contained in $W$.  In conclusion $\mu_{|X}\inv: W\map X$ is a Trisecant Flop according to the terminology of \cite{JEMS} and is given by a linear system of dimension 7 of forms of degree 14 having points of multiplicity 5 along $U$.  
}
\end{rmk}

\section{Summary tables of examples}

\begin{table}[htbp]
  \renewcommand{\arraystretch}{1.0}
  \centering
  \tabcolsep=1.5pt 
  \footnotesize
  \begin{tabular}{|c|c|c|c|}
  \hline
  Surface $S\subset X\subset \mathbb{P}^7$  &
  $\det \left(\begin{smallmatrix} H_X^4& H_X^2\cdot [S]\\ [S] \cdot H_X^2 & (S)_X^2 \end{smallmatrix}\right)$ &
  \begin{tabular}{c} Codim in \\ $\mathcal{M}_{(2,2,2),7}$ \end{tabular} &
  \begin{tabular}{c} Count of parameters \\ ($h^0(\mathcal I_{S/\mathbb{P}^7}(2))$,
       $h^0(N_{S/\mathbb{P}^7})$, 
      \\ $h^0(N_{S/X})$)\end{tabular} \\
  \hline
  \hline
  $\mathfrak{S}(3;5, 0, 0)$ & $\det \begin{colorpmatrix} 8 & 4 \\ 4 & 4 \end{colorpmatrix} = 16 $ & $1$ & $23$, $41$, $3$ \\ \hline 
  
  & $\det:23$ & & \\ \hline

  $\mathfrak{S}(2;2, 0, 0)$ & $\det \begin{colorpmatrix} 8 & 2 \\ 2 & 4 \end{colorpmatrix} = 28 $ & $2$ & $27$, $25$, $0$ \\ \hline 
  
  $\mathfrak{S}(4;6, 1, 0)$ & $\det \begin{colorpmatrix} 8 & 6 \\ 6 & 8 \end{colorpmatrix} = 28 $ & $2$ & $19$, $53$, $4$ \\ \hline 
  
  \rowcolor{Gray} $\mathfrak{S}(1;0, 0, 0)$ & $\det \begin{colorpmatrix} 8 & 1 \\ 1 & 4 \end{colorpmatrix} = 31 $ & $3$ & $30$, $15$, $0$ \\ \hline 
  
  $\mathfrak{S}(5;8, 0, 1)$ & $\det \begin{colorpmatrix} 8 & 8 \\ 8 & 12 \end{colorpmatrix} = 32 $ & $2$ & $15$, $65$, $4$ \\ \hline 
  
  \rowcolor{Gray} $\mathfrak{S}(2;1, 0, 0)$ & $\det \begin{colorpmatrix} 8 & 3 \\ 3 & 6 \end{colorpmatrix} = 39 $ & $3$ & $24$, $33$, $0$ \\ \hline 
  
  \rowcolor{Gray} $\mathfrak{S}(3;4, 0, 0)$ & $\det \begin{colorpmatrix} 8 & 5 \\ 5 & 8 \end{colorpmatrix} = 39 $ & $3$ & $20$, $47$, $2$ \\ \hline 
  
  $\mathfrak{S}(5;11, 1, 0)^{\ast}$ & $\det \begin{colorpmatrix} 8 & 10 \\ 10 & 18 \end{colorpmatrix} = 44 $ & $3$ & $13$, $71$, $5$ \\ \hline 

  \rowcolor{Gray} $\mathfrak{S}(4;5, 1, 0)$ & $\det \begin{colorpmatrix} 8 & 7 \\ 7 & 12 \end{colorpmatrix} = 47 $ & $3$ & $16$, $59$, $2$ \\ \hline 
  
  \rowcolor{Gray} $\mathfrak{S}(5;7, 0, 1)$ & $\det \begin{colorpmatrix} 8 & 9 \\ 9 & 16 \end{colorpmatrix} = 47 $ & $3$ & $12$, $71$, $2$ \\ \hline 
  
  \rowcolor{Gray} $\mathfrak{S}(5;8, 2, 0)$ & $\det \begin{colorpmatrix} 8 & 9 \\ 9 & 16 \end{colorpmatrix} = 47 $ & $3$ & $14$, $67$, $4$ \\ \hline 
  
  $\mathfrak{S}(4;8, 0, 0)$ & $\det \begin{colorpmatrix} 8 & 8 \\ 8 & 14 \end{colorpmatrix} = 48 $ & $3$ & $15$, $63$, $3$ \\ \hline 
  
  $\mathfrak{S}(6;11, 1, 1)$ & $\det \begin{colorpmatrix} 8 & 12 \\ 12 & 24 \end{colorpmatrix} = 48 $ & $3$ & $9$, $81$, $3$ \\ \hline 
  
  $\mathfrak{S}(3;1, 1, 0)$ & $\det \begin{colorpmatrix} 8 & 4 \\ 4 & 8 \end{colorpmatrix} = 48 $ & $4$ & $21$, $41$, $0$ \\ \hline 
  
  \rowcolor{Gray} $\mathfrak{S}(5;10, 1, 0)$ & $\det \begin{colorpmatrix} 8 & 11 \\ 11 & 22 \end{colorpmatrix} = 55 $ & $3$ & $10$, $77$, $2$ \\ \hline 
  
  \rowcolor{Gray} $\mathfrak{S}(3;0, 1, 0)$ & $\det \begin{colorpmatrix} 8 & 5 \\ 5 & 10 \end{colorpmatrix} = 55 $ & $5$ & $18$, $49$, $0$ \\ \hline 
  
  $\mathfrak{S}(5;7, 2, 0)$ & $\det \begin{colorpmatrix} 8 & 10 \\ 10 & 20 \end{colorpmatrix} = 60 $ & $3$ & $11$, $73$, $1$ \\ \hline 
  
  $\mathfrak{S}(3;3, 0, 0)$ & $\det \begin{colorpmatrix} 8 & 6 \\ 6 & 12 \end{colorpmatrix} = 60 $ & $5$ & $17$, $53$, $1$ \\ \hline 
  
  $\mathfrak{S}(6;6, 5, 0)$ & $\det \begin{colorpmatrix} 8 & 10 \\ 10 & 20 \end{colorpmatrix} = 60 $ & $5$ & $13$, $69$, $5$ \\ \hline 
  
  $\mathfrak{S}(4;1, 0, 1)$ & $\det \begin{colorpmatrix} 8 & 6 \\ 6 & 12 \end{colorpmatrix} = 60 $ & $6$ & $15$, $57$, $0$ \\ \hline 
  
  \rowcolor{Gray} $\mathfrak{S}(4;7, 0, 0)$ & $\det \begin{colorpmatrix} 8 & 9 \\ 9 & 18 \end{colorpmatrix} = 63 $ & $3$ & $12$, $69$, $0$ \\ \hline 

\end{tabular}
\caption{Unirational families of fourfolds in $\mathcal{M}_{(2,2,2),7}$ described as the closure of 
the locus of fourfolds $[X]$ 
   containing some 
   smooth rational 
   surface $S=\mathfrak{S}(a;n_1,n_2,\ldots)\subset \mathbb{P}^7$  
   obtained as the image of the plane via the linear system 
   of curves of degree $a$ having $n_i$ general points of multiplicity $i$, for $i\geq 1$.
   All the surfaces are cut out by quadrics
   except those marked with an asterisk $({}^\ast)$.
   }
\label{Table: unirationality}
\end{table}

\begin{table}[htbp]
  \renewcommand{\arraystretch}{1.0}
  \centering
  \tabcolsep=1.5pt 
  \footnotesize
  \begin{tabular}{|c|c|c|c|}
  \hline
  Surface $S\subset X\subset \mathbb{P}^7$  &
  $\det \left(\begin{smallmatrix} H_X^4& H_X^2\cdot [S]\\ [S] \cdot H_X^2 & (S)_X^2 \end{smallmatrix}\right)$ &
  \begin{tabular}{c} Codim in \\ $\mathcal{M}_{(2,2,2),7}$ \end{tabular} &
  \begin{tabular}{c} Count of parameters \\ ($h^0(\mathcal I_{S/\mathbb{P}^7}(2))$,
       $h^0(N_{S/\mathbb{P}^7})$, 
      \\ $h^0(N_{S/X})$)\end{tabular} \\
  \hline
  \hline

  $\mathfrak{S}(6;8, 4, 0)$ & $\det \begin{colorpmatrix} 8 & 12 \\ 12 & 26 \end{colorpmatrix} = 64 $ & $3$ & $9$, $79$, $1$ \\ \hline 
  
  $\mathfrak{S}(4;4, 1, 0)$ & $\det \begin{colorpmatrix} 8 & 8 \\ 8 & 16 \end{colorpmatrix} = 64 $ & $4$ & $13$, $65$, $0$ \\ \hline 
  
  $\mathfrak{S}(6;0, 7, 0)$ & $\det \begin{colorpmatrix} 8 & 8 \\ 8 & 16 \end{colorpmatrix} = 64 $ & $5$ & $15$, $61$, $3$ \\ \hline 
  
  $\mathfrak{S}(2;0, 0, 0)$ & $\det \begin{colorpmatrix} 8 & 4 \\ 4 & 10 \end{colorpmatrix} = 64 $ & $6$ & $21$, $39$, $0$ \\ \hline 

  \rowcolor{Gray} $\mathfrak{S}(6;5, 5, 0)$ & $\det \begin{colorpmatrix} 8 & 11 \\ 11 & 24 \end{colorpmatrix} = 71 $ & $3$ & $10$, $75$, $0$ \\ \hline 
  
  \rowcolor{Gray} $\mathfrak{S}(7;7, 5, 1)$ & $\det \begin{colorpmatrix} 8 & 13 \\ 13 & 30 \end{colorpmatrix} = 71 $ & $3$ & $8$, $81$, $0$ \\ \hline 
  
  $\mathfrak{S}(6;2, 6, 0)$ & $\det \begin{colorpmatrix} 8 & 10 \\ 10 & 22 \end{colorpmatrix} = 76 $ & $4$ & $11$, $71$, $0$ \\ \hline 
  
  \rowcolor{Gray} $\mathfrak{S}(3;2, 0, 0)$ & $\det \begin{colorpmatrix} 8 & 7 \\ 7 & 16 \end{colorpmatrix} = 79 $ & $7$ & $14$, $59$, $0$ \\ \hline 
  
  $\mathfrak{S}(7;4, 6, 1)$ & $\det \begin{colorpmatrix} 8 & 12 \\ 12 & 28 \end{colorpmatrix} = 80 $ & $4$ & $9$, $77$, $0$ \\ \hline 
  
  \rowcolor{Gray} $\mathfrak{S}(7;4, 8, 0)$ & $\det \begin{colorpmatrix} 8 & 13 \\ 13 & 32 \end{colorpmatrix} = 87 $ & $5$ & $8$, $79$, $0$ \\ \hline 
  
  $\mathfrak{S}(8;4, 7, 2)$ & $\det \begin{colorpmatrix} 8 & 14 \\ 14 & 36 \end{colorpmatrix} = 92 $ & $6$ & $7$, $81$, $0$ \\ \hline

  & $\det:95$ & & \\ \hline

  $\mathfrak{S}(7;1, 9, 0)^{\ast}$ & $\det \begin{colorpmatrix} 8 & 12 \\ 12 & 30 \end{colorpmatrix} = 96 $ & $6$ & $9$, $75$, $0$ \\ \hline 

  \rowcolor{Gray} $\mathfrak{S}(8;1, 8, 2)$ & $\det \begin{colorpmatrix} 8 & 13 \\ 13 & 34 \end{colorpmatrix} = 103 $ & $7$ & $8$, $77$, $0$ \\ \hline 
  
  & $\det:108$ & & \\ \hline



  
\end{tabular}
\caption{Continuation of Table~\ref{Table: unirationality}.}
\label{Table: unirationality 2}
\end{table}

\newgeometry{left=1.0cm,bottom=1.0cm}

\thispagestyle{empty}

\begin{landscape}

\begin{table}[htbp]
 \renewcommand{\arraystretch}{1.5}
\centering
\tabcolsep=1.5pt 
\footnotesize
\begin{tabular}{|c|c|c|c||c|c|c|c|}
\hline
{\begin{minipage}[c]{0.30\textwidth} Surface $S\subset X\subset Y=Q_1\cap Q_2\subset\mathbb{P}^7$ \end{minipage}} &
$\left(\begin{smallmatrix} H_X^4& H_X^2\cdot [S]\\ [S] \cdot H_X^2 & (S)_X^2 \end{smallmatrix}\right)$ &
\begin{tabular}{c} Codim in \\ $\mathcal{M}_{(2,2,2),7}$ \end{tabular} &
{\begin{minipage}[c]{0.2\textwidth} Count of parameters \end{minipage}} & Fourfold $W$ &
Surface $U\subset W$  & 
\begin{minipage}[c]{0.20\textwidth} Associated Castelnuovo surface $\widetilde{U}$ \end{minipage} & $\left(\begin{smallmatrix} H_{\widetilde{U}}^2& H_{\widetilde{U}}\cdot K_{\widetilde{U}} \\  K_{\widetilde{U}} \cdot H_{\widetilde{U}} & K_{\widetilde{U}}^2 \end{smallmatrix}\right)$ \\
\hline
\hline

{\begin{minipage}[c]{0.30\textwidth} {Plane in $\mathbb{P}^7$}. This surface admits a congruence of $1$-secant lines inside $Y$.\end{minipage}} & \begin{tabular}{c} $\begin{pmatrix}
     8&1\\
     1&4
     \end{pmatrix}$ \\ $\det: 31$ \end{tabular} & $3$ & 
     \begin{tabular}{l} $h^0(\mathcal I_{S/\mathbb{P}^7}(2)) = 30$ \\
          $h^0(N_{S/\mathbb{P}^7}) = 15$ \\ $h^0(N_{S/X}) = 0$
     \end{tabular} & $W=\mathbb{P}^4$ & \begin{minipage}[c]{0.25\textwidth} {Smooth surface in $\mathbb{P}^4$ of degree $9$ and sectional genus $9$ cut out by one cubic and $3$ quartics.} \end{minipage} & $\widetilde{U}=U$ & \begin{tabular}{c} $\begin{pmatrix}
          9&7\\
          7&2
          \end{pmatrix}$ \\ $\det: -31$ \end{tabular} \\
\hline 
\multicolumn{8}{c}{
\begin{minipage}[t]{1.1\columnwidth}%
\emph{M2-command (1)}: \texttt{specialFourfold "plane in PP\^{}7"} \\
\emph{M2-command (2)}: \texttt{specialFourfold("prebuilt-example",1)}
\end{minipage}} \\     
\hline

{\begin{minipage}[c]{0.30\textwidth} {Smooth rational surface of degree $9$
     and sectional genus $3$, 
     obtained as the image of the plane via the linear system 
     of quintic curves having 
     $7$ general simple base points and one general triple point.
   This surface admits a congruence of $5$-secant cubic curves inside $Y$.} \end{minipage}} & 
   \begin{tabular}{c} $\begin{pmatrix}
          8&9\\
          9&16
          \end{pmatrix}$ \\ $\det: 47$ \end{tabular} & $3$ & 
          \begin{tabular}{l} $h^0(\mathcal I_{S/\mathbb{P}^7}(2)) = 12$ \\
               $h^0(N_{S/\mathbb{P}^7}) = 71$ \\ $h^0(N_{S/X}) = 2$
          \end{tabular} & \begin{minipage}[c]{0.20\textwidth} Smooth  complete intersection of two quadrics  in $\mathbb{P}^6$ \end{minipage} & \begin{minipage}[c]{0.25\textwidth} {Smooth surface in $\mathbb{P}^6$ of degree $16$ and sectional genus $14$ cut out by
               $2$ quadrics  and $9$ cubics.}\end{minipage} & \begin{minipage}[c]{0.20\textwidth} {$U$ is a simple internal projection of a smooth surface $\widetilde{U}\subset\mathbb{P}^7$} \end{minipage} & \begin{tabular}{c} $\begin{pmatrix}
               17&9\\
               9&2
               \end{pmatrix}$ \\ $\det: -47$ \end{tabular} \\
\hline 
\multicolumn{8}{c}{
\begin{minipage}[t]{1.1\columnwidth}%
\emph{M2-command (1)}: \texttt{specialFourfold surface \{5, 7, 0, 1\}} \\
\emph{M2-command (2)}: \texttt{specialFourfold("prebuilt-example",2)}
\end{minipage}} \\   
\hline 

{\begin{minipage}[c]{0.30\textwidth} {General internal projection of a general K3 surface of degree $14$ and genus $8$ in $\mathbb{P}^8$}. This surface admits a congruence of $9$-secant quintic curves inside~$Y$.\end{minipage}} & \begin{tabular}{c} $\begin{pmatrix}
          8&13\\
          13&28
          \end{pmatrix}$ \\ $\det: 55$ \end{tabular} & $3$ & 
          \begin{tabular}{l} $h^0(\mathcal I_{S/\mathbb{P}^7}(2)) = 9$ \\
               $h^0(N_{S/\mathbb{P}^7}) = 84$ \\ $h^0(N_{S/X}) = 6$
          \end{tabular} & \begin{minipage}[c]{0.20\textwidth} Cubic  fourfold  in $\mathcal C_{14}$\end{minipage} & \begin{minipage}[c]{0.25\textwidth} {Smooth surface in $\mathbb{P}^5$ of degree $13$ and sectional genus $12$ cut out by $7$ cubics}.   \end{minipage} & $\widetilde{U}=U$ & \begin{tabular}{c} $\begin{pmatrix}
               13&9\\
               9&2
               \end{pmatrix}$ \\ $\det: -55$ \end{tabular} \\
\hline 
\multicolumn{8}{c}{
\begin{minipage}[t]{1.1\columnwidth}%
\emph{M2-command (1)}: \texttt{specialFourfold "internal projection of K3 surface of genus 8"} \\
\emph{M2-command (2)}: \texttt{specialFourfold("prebuilt-example",3)}
\end{minipage}} \\     
\hline

{\begin{minipage}[c]{0.30\textwidth} {Smooth rational surface of degree $11$
     and sectional genus $5$, 
     obtained as the image of the plane via the linear system 
     of quintic curves having 
     $10$ general simple base points and one general double point.
   This surface admits a congruence of $7$-secant quartic curves inside $Y$.} \end{minipage}} & 
   \begin{tabular}{c} $\begin{pmatrix}
          8&11\\
          11&22
          \end{pmatrix}$ \\ $\det: 55$ \end{tabular} & $3$ & 
          \begin{tabular}{l} $h^0(\mathcal I_{S/\mathbb{P}^7}(2)) = 10$ \\
               $h^0(N_{S/\mathbb{P}^7}) = 77$ \\ $h^0(N_{S/X}) = 2$
          \end{tabular} & $W=\mathbb{P}^4$ & \begin{minipage}[c]{0.25\textwidth} {Surface in $\mathbb{P}^4$ of degree $12$ and sectional genus $12$ cut out by
               $7$ quintics and with $10$ singular points}\end{minipage} & \begin{minipage}[c]{0.20\textwidth} {$U$ is a simple internal projection of a smooth surface $\widetilde{U}\subset\mathbb{P}^5$ as in the previous row.} \end{minipage} & \begin{tabular}{c} $\begin{pmatrix}
               13&9\\
               9&2
               \end{pmatrix}$ \\ $\det: -55$ \end{tabular} \\
\hline 
\multicolumn{8}{c}{
\begin{minipage}[t]{1.1\columnwidth}%
\emph{M2-command (1)}: \texttt{specialFourfold surface \{5, 10, 1\}} \\
\emph{M2-command (2)}: \texttt{specialFourfold("prebuilt-example",4)}
\end{minipage}} \\   

\end{tabular}
\caption{Complete intersections of three quadrics $X\subset\mathbb{P}^7$ which are birational to rational fourfolds $W$.}
\label{Table: rational complete intersections of three quadrics in P7}
\end{table}

\newpage 
\thispagestyle{empty}

\begin{table}[htbp]
     \renewcommand{\arraystretch}{1.5}
    \centering
    \tabcolsep=1.5pt 
    \footnotesize
    \begin{tabular}{|c|c|c|c||c|c|c|c|}
    \hline
    {\begin{minipage}[c]{0.30\textwidth} Surface $S\subset X\subset Y=Q_1\cap Q_2\subset\mathbb{P}^7$ \end{minipage}} &
    $\left(\begin{smallmatrix} H_X^4& H_X^2\cdot [S]\\ [S] \cdot H_X^2 & (S)_X^2 \end{smallmatrix}\right)$ &
    \begin{tabular}{c} Codim in \\ $\mathcal{M}_{(2,2,2),7}$ \end{tabular} &
    {\begin{minipage}[c]{0.2\textwidth} Count of parameters \end{minipage}} & Fourfold $W$ &
    Surface $U\subset W$  & 
    \begin{minipage}[c]{0.20\textwidth} Associated Castelnuovo surface $\widetilde{U}$ \end{minipage} & $\left(\begin{smallmatrix} H_{\widetilde{U}}^2& H_{\widetilde{U}}\cdot K_{\widetilde{U}} \\  K_{\widetilde{U}} \cdot H_{\widetilde{U}} & K_{\widetilde{U}}^2 \end{smallmatrix}\right)$ \\
    \hline
    \hline

    {\begin{minipage}[c]{0.30\textwidth} {General external projection 
     of a smooth rational surface of degree $9$ and sectional genus $2$ in $\mathbb{P}^8$,
     obtained as the image of the plane via the linear system 
     of quartic curves having 
     $3$ general simple base points and one general double point.
   This surface admits a congruence of $9$-secant quintic curves inside $Y$.} \end{minipage}} & 
   \begin{tabular}{c} $\begin{pmatrix}
          8&9\\
          9&20
          \end{pmatrix}$ \\ $\det: 79$ \end{tabular} & $7$ & 
          \begin{tabular}{l} $h^0(\mathcal I_{S/\mathbb{P}^7}(2)) = 10$ \\
               $h^0(N_{S/\mathbb{P}^7}) = 71$ \\ $h^0(N_{S/X}) = 0$
          \end{tabular} & \begin{minipage}[c]{0.20\textwidth} Smooth Del Pezzo fourfold in $\mathbb{P}^7$ \end{minipage} & \begin{minipage}[c]{0.25\textwidth} {Singular surface in $\mathbb{P}^7$ of degree $21$ and sectional genus $17$ cut out by
               $5$ quadrics and $7$ cubics.}\end{minipage} & \begin{minipage}[c]{0.20\textwidth} {$U$ is a special external projection of a smooth surface $\widetilde{U}\subset\mathbb{P}^8$.} \end{minipage} & \begin{tabular}{c} $\begin{pmatrix}
               21 & 11 \\
               11 & 2
               \end{pmatrix}$ \\ $\det: -79$ \end{tabular} \\
\hline 
\multicolumn{8}{c}{
\begin{minipage}[t]{1.1\columnwidth}%
\emph{M2-command (1)}: \texttt{specialFourfold externalProjection surface \{4, 3, 1\}}  \\
\emph{M2-command (2)}: \texttt{specialFourfold("prebuilt-example",5)}
\end{minipage}} \\  
\hline

    {\begin{minipage}[c]{0.30\textwidth} {General nodal projection 
     of a smooth rational surface of degree $11$ and sectional genus $4$ in $\mathbb{P}^8$,
     obtained as the image of the plane via the linear system 
     of quintic curves having 
     $6$ general simple base points and $2$ general double points.
   This surface admits a congruence of $11$-secant sextic curves inside $Y$.} \end{minipage}} & 
   \begin{tabular}{c} $\begin{pmatrix}
          8&11\\
          11&26
          \end{pmatrix}$ \\ $\det: 87$ \end{tabular} & $5$ & 
          \begin{tabular}{l} $h^0(\mathcal I_{S/\mathbb{P}^7}(2)) = 9$ \\
               $h^0(N_{S/\mathbb{P}^7}) = 76$ \\ $h^0(N_{S/X}) = 0$
          \end{tabular} & \begin{minipage}[c]{0.20\textwidth} Smooth  complete intersection of two quadrics  in $\mathbb{P}^6$ \end{minipage} & \begin{minipage}[c]{0.25\textwidth} {Smooth surface in $\mathbb{P}^6$ of degree $17$ and sectional genus $15$ cut out by
               $2$ quadrics, $6$ cubics, and $2$ quartics.}\end{minipage} & $\widetilde{U} = U$ & \begin{tabular}{c} $\begin{pmatrix}
               17&11\\
               11&2
               \end{pmatrix}$ \\ $\det: -87$ \end{tabular} \\
\hline 
\multicolumn{8}{c}{
\begin{minipage}[t]{1.1\columnwidth}%
\emph{M2-command (1)}: \texttt{specialFourfold surface(\{5, 6, 2\}, NumNodes=>1)} \\
\emph{M2-command (2)}: \texttt{specialFourfold("prebuilt-example",6)}
\end{minipage}} \\   
\hline 

{\begin{minipage}[c]{0.30\textwidth} {Smooth rational surface of degree $12$ and sectional genus $6$,
     obtained as the image of the plane via the linear system 
     of septic curves having 
     one general simple base point and $9$ general double points.
   This surface admits a congruence of $7$-secant quartic curves inside $Y$. See also Remark~\ref{rmk: degeneration of congruence}.} \end{minipage}} & 
   \begin{tabular}{c} $\begin{pmatrix}
          8&12\\
          12&30
          \end{pmatrix}$ \\ $\det: 96$ \end{tabular} & $6$ & 
          \begin{tabular}{l} $h^0(\mathcal I_{S/\mathbb{P}^7}(2)) = 9$ \\
               $h^0(N_{S/\mathbb{P}^7}) = 75$ \\ $h^0(N_{S/X}) = 0$
          \end{tabular} & $W = \mathbb{P}^4$ & \begin{minipage}[c]{0.25\textwidth} {Surface in $\mathbb{P}^4$ of degree $12$ and sectional genus $12$ cut out by
               $6$ quintics and $4$ sextics and with $9$ singular points.}\end{minipage} & $\_\_$ & \begin{tabular}{c} $\begin{pmatrix}
               \_\_ & \_\_ \\
               \_\_ & \_\_
               \end{pmatrix}$ \\ $\det: \_\_$ \end{tabular} \\
\hline 
\multicolumn{8}{c}{
\begin{minipage}[t]{1.1\columnwidth}%
\emph{M2-command (1)}: \texttt{specialFourfold surface \{7, 1, 9\}} \\
\emph{M2-command (2)}: \texttt{specialFourfold("prebuilt-example",7)}
\end{minipage}} \\   

\end{tabular}
\caption{Continuation of Table~\ref{Table: rational complete intersections of three quadrics in P7}.}
\label{Table: continuation of Table 1}
\end{table}

\end{landscape}

\restoregeometry

\begin{rmk}\label{rmk: degeneration of congruence}{\rm 
  In all of the four cases listed in Table~\ref{Table: rational complete intersections of three quadrics in P7},
  the surface $S\subset X\subset \mathbb{P}^7$ is cut out by quadrics.
  The same holds true for the surface $S$ in the second row of Table~\ref{Table: continuation of Table 1}.
  
  The surface $S\subset\mathbb{P}^7$ as in the first row of 
  Table~\ref{Table: continuation of Table 1} is cut out by $10$ quadrics and one cubic. The $10$ quadrics through $S$ 
  define the union of $S$ with a line.
  
  In the case of the third row of Table~\ref{Table: continuation of Table 1},
  the surface $S\subset\mathbb{P}^7$ is cut out by $9$ quadrics and one cubic.
  These $9$ quadrics through $S$ define a reducible surface $S\cup P$ consisting of the union of $S$ with a plane $P$, and
  which is a degeneration of the surface appearing in the third row of Table~\ref{Table: rational complete intersections of three quadrics in P7} (that is, an internal projection of a K3 surface of genus $8$).
  }
  \end{rmk}

\section{Computations with \emph{Macaulay2}}

Here are some tips of how to use functions from the \emph{Macaulay2} package \emph{SpecialFanoFourfolds} \cite{SpecialFanoFourfoldsSource,macaulay2}
 to construct and work with the examples in Tables \ref{Table: rational complete intersections of three quadrics in P7} and \ref{Table: continuation of Table 1}.
For instance, we now construct a random example as in the second row of Table \ref{Table: rational complete intersections of three quadrics in P7} defined over the finite field $\mathbb{F}_{33331}$.
    {\footnotesize
    \begin{Verbatim}[commandchars=&!$]
&colore!darkorange$!M2 -q --no-preload$
&colore!output$!Macaulay2, version 1.22$
    &colore!darkorange$!i1 :$ &colore!airforceblue$!needsPackage$ "&colore!bleudefrance$!SpecialFanoFourfolds$"; &colore!commentoutput$!-- version 2.7.1$
    &colore!darkorange$!i2 :$ S = &colore!bleudefrance$!surface$({5, 7, 0, 1}, &colore!airforceblue$!ZZ$/33331);
    &colore!circOut$!o2 :$ &colore!output$!ProjectiveVariety, surface in PP^7$
    &colore!darkorange$!i3 :$ X = &colore!bleudefrance$!specialFourfold$ S; &colore!commentoutput$!-- shortcut for specialFourfold(S,random({3:{2}},S))$
    &colore!circOut$!o3 :$ &colore!output$!ProjectiveVariety, complete intersection of three quadrics in PP^7$
    &colore!output$!     containing a surface of degree 9 and sectional genus 3$
    &colore!darkorange$!i4 :$ &colore!bleudefrance$!describe$ X
    &colore!circOut$!o4 =$ &colore!output$!Complete intersection of 3 quadrics in PP^7$
         &colore!output$!of discriminant 47 = det| 8 9  |$
         &colore!output$!                        | 9 16 |$
         &colore!output$!containing a smooth surface of degree 9 and sectional genus 3$
         &colore!output$!cut out by 12 hypersurfaces of degree 2$
    &colore!darkorange$!i5 :$ &colore!bleudefrance$!surface$ X
    &colore!circOut$!o5 =$ &colore!output$!S$
    &colore!circOut$!o5 :$ &colore!output$!ProjectiveVariety, surface in PP^8$
    &colore!darkorange$!i6 :$ Y = &colore!bleudefrance$!ambientFivefold$ X; &colore!commentoutput$!-- complete intersection of 2 quadrics through X$
    &colore!circOut$!o6 :$ &colore!output$!ProjectiveVariety, 5-dimensional subvariety of PP^7$
    \end{Verbatim}
    } \noindent
Notice that the command \texttt{specialFourfold("prebuilt-example",2)}
returns a similar object as above, but with some data   
cached in memory to make calculations run more quickly. 

Below we take the congruence of $9$-secant quintic curves to the surface $S$
inside the ambient fivefold $Y$ of $X$. Next, 
 we perform a count of parameters similar to that reported 
 in the fourth column of the tables (see Proposition~\ref{conti_parametri}).
    {\footnotesize
    \begin{Verbatim}[commandchars=&!$]
    &colore!darkorange$!i7 :$ &colore!bleudefrance$!detectCongruence$(X, 3);
    &colore!circOut$!o7 :$ &colore!output$!Congruence of 5-secant cubic curves to S in Y$
    &colore!darkorange$!i8 :$ &colore!bleudefrance$!parameterCount$(X, &colore!airforceblue$!Verbose$=>true) &colore!commentoutput$!-- some output lines are omitted$
    &colore!output$!-- h^1(N_{S,Y}) = 0$
    &colore!output$!-- h^0(N_{S,Y}) = 23$
    &colore!output$!-- h^1(O_S(2)) = 0 and h^0(I_{S,Y}(2)) = 10 = h^0(O_Y(2)) - \chi(O_S(2))$
    &colore!output$!-- h^0(N_{S,X}) = 2$
    &colore!output$!-- codim{[X] : S \subset X \subset Y} <= 3$
    &colore!circOut$!o8 =$ &colore!output$!(3, (10, 23, 2))$
    \end{Verbatim}
    } \noindent 
We now compute the Castelnuovo surface $\widetilde{U}$ associated with $X$
(this can take a while for newly created objects and therefore with empty cache).
    {\footnotesize
    \begin{Verbatim}[commandchars=&!$]
    &colore!darkorange$!i9 :$ Utilde = &colore!bleudefrance$!associatedCastelnuovoSurface$(X, &colore!airforceblue$!Verbose$=>true); &colore!commentoutput$!-- many output lines are omitted$
    &colore!output$!-- computed the map mu from the fivefold in PP^7 to PP^6 defined by the hypersurfaces$
    &colore!output$!   of degree 5 with points of multiplicity 3 along the surface S of degree 9 and genus 3$
    &colore!output$!-- computing the surface U corresponding to the fourfold X$
    &colore!circOut$!o9 :$ &colore!output$!ProjectiveVariety, Castelnuovo surface associated to X$
    &colore!darkorange$!i10 :$ &colore!airforceblue$!? ideal$ Utilde
    &colore!circOut$!o10 =$ &colore!output$!surface of degree 17 and sectional genus 14 in PP^7$
    &colore!output$!      cut out by 7 hypersurfaces of degree 2$
    \end{Verbatim}
    } \noindent
From the last output we can recover some data obtained in the construction. For instance, the Fano map $\mu:Y\dashrightarrow W\subset\mathbb{P}^6$ 
defined by the quintic hypersurfaces with points of multiplicity $3$ along the surface $S$ 
can be recovered as shown below.
We finally restrict this map $\mu$ to $X$ and compose it with the inverse of a rational parametrization of $W$, 
thus obtaining a birational map $X\stackrel{\simeq}{\dashrightarrow}\mathbb{P}^4$.
    {\footnotesize
    \begin{Verbatim}[commandchars=&!$]
    &colore!darkorange$!i11 :$ mu = &colore!airforceblue$!first$ &colore!bleudefrance$!building$ Utilde;
    &colore!circOut$!o11 =$ &colore!output$!MultirationalMap (dominant rational map from Y to 4-dimensional subvariety of PP^6)$
    &colore!darkorange$!i12 :$ (mu|X) * (&colore!airforceblue$!parametrize target$ mu)^-1;
    &colore!circOut$!o12 =$ &colore!output$!MultirationalMap (rational map from X to PP^4)$
    \end{Verbatim}
    } \noindent

\providecommand{\bysame}{\leavevmode\hbox to3em{\hrulefill}\thinspace}
\providecommand{\MR}{\relax\ifhmode\unskip\space\fi MR }
\providecommand{\MRhref}[2]{%
  \href{http://www.ams.org/mathscinet-getitem?mr=#1}{#2}
}
\providecommand{\href}[2]{#2}

\end{document}